\documentclass[preprint,11pt]{imsart}

\RequirePackage[OT1]{fontenc}
\RequirePackage{amsthm,amsmath}
\RequirePackage[numbers]{natbib}
\RequirePackage[colorlinks,citecolor=blue,urlcolor=blue]{hyperref}

\usepackage{amssymb}


\startlocaldefs
\numberwithin{equation}{section}
\theoremstyle{plain}
\newtheorem{theorem}{Theorem}[section]

\newtheorem{lemma}{Lemma}[section]
\newtheorem{corollary}{Corollary}[section]

\theoremstyle{definition}
\newtheorem{remark}[theorem]{Remark}
\endlocaldefs

\newcommand{\bea}{\begin{eqnarray}}
\newcommand{\ena}{\end{eqnarray}}
\newcommand{\beq}{\begin{equation}}
\newcommand{\enq}{\end{equation}}
\newcommand{\beas}{\begin{eqnarray*}}
\newcommand{\enas}{\end{eqnarray*}}

\newcommand{\EE}{{\mathbb{E}} }

\newcommand{\bs}{{\bf{s}}}
\newcommand{\bS}{{\bf{S}}}
\newcommand{\AAA}{{\mathcal{A}}}

\begin{document}

\begin{frontmatter}
\title{Chi-square approximation by Stein's method with application to Pearson's statistic}
\runtitle{Chi-square approximation by Stein's method}

\begin{aug}
\author{\fnms{Robert E.} \snm{Gaunt}}
\,
\author{\fnms{Alastair} \snm{Pickett}}
\and
\author{\fnms{Gesine} \snm{Reinert}}

\runauthor{Gaunt et al.}

\affiliation{University of Oxford\thanksmark{m1}}

\address{Department of Statistics\\
University of Oxford\\
24-29 St$.$ Giles'\\
Oxford OX1 3LB\\
United Kingdom }
\end{aug}

\begin{abstract}
 This paper concerns the development of Stein's method for chi-square approximation and its application to problems in statistics.  New bounds for the derivatives of the solution of the gamma Stein equation are obtained.  These bounds involve both the shape parameter and the order of the derivative.  Subsequently Stein's method for chi-square approximation is applied to bound the distributional distance between Pearson's statistic and its limiting chi-square distribution, measured using smooth test functions.  In combination with the use of symmetry arguments, Stein' method yields explicit bounds on this distributional distance of order $n^{-1}$.
\end{abstract}

\begin{keyword}[class=MSC]
\kwd[Primary ]{60F05}
\kwd{62G10}
\kwd{62G20}
\end{keyword}

\begin{keyword}
\kwd{Stein's method}
\kwd{chi-square approximation}
\kwd{Pearson's statistic}
\kwd{rate of convergence}
\end{keyword}

\end{frontmatter}

\section{Introduction}
One of the first statistical tests which a student learns is based on Pearson's chi-square statistic, denoted by $X^2$ and commonly used to test goodness-of-fit in classification problems. Under the null hypothesis of fit to (any) specified distribution over $m$ classes, $X^2$ converges in distribution to a chi-square random variable with $m-1$ degrees of freedom, as the sample size increases. There are many rules-of-thumb as to when it is valid to use Pearson's test, generally based on experience and simulation experiments; the most famous such rule, oft quoted in school textbooks, is that the expected values in each class should be  at least 5 under the null hypothesis. This restriction is now considered by many authors to be conservative, although for some datasets, any such stipulation quickly becomes a severe handicap, see for example \cite{roscoebyers}. Error bounds can be used to rigorously improve such rules-of-thumb and base them on the experimenter's particular needs. 

The most common method of analysing the asymptotics of $X^2$ is by way of normal approximation, using the Lindeberg-Levy multivariate central limit theorem. Knowledge of the convergence rate of the central limit theorem shows that $X^2$ converges to its limit at rate of at least  $n^{-1/2}$, where $n$ is the overall size of the sample. This strategy has been applied to the approximation of quadratic forms to obtain bounds on the distance to a normal distribution using Stein's method, see for example \cite{de15}.

In this paper, we consider convergence in the square directly, using Stein's method, to obtain an explicit bound on the rate of convergence of order $n^{-1}$ for smooth test functions. This may seem counter-intuitive at first sight, but can be attributed to the fact that there is an additional symmetry structure to the problem that is not exploited when using  a normal approximation; it is this symmetry which gives rise to the improvement in the order of the chi-square approximation. 
Another motivation for studying chi-square approximations directly is that sometimes the underlying normality does not hold; see for example \cite{fanetal}.  

In order to obtain explicit bounds for such a chi-square approximation we employ Stein's method. Stein's method was  introduced in  \cite{stein} for assessing the distance between a probability distribution and the normal distribution.  At the heart of Stein's method for normal approximation is an inhomogeneous differential equation, known as the Stein equation:
\begin{equation} \label{normal equation} f'(x)-xf(x)=h(x)-\Phi h,
\end{equation} 
where $\Phi h$ denotes the quantity $\mathbb{E}h(Z)$ for $Z\sim N(0,1)$.  Evaluating both sides of (\ref{normal equation}) at a random variable $W$ and taking expectations gives
\begin{equation} \label{expect} \mathbb{E}[f'(W)-Wf(W)]=\mathbb{E}h(W)-\Phi h.
\end{equation}
Thus, the quantity $\mathbb{E}h(W)-\Phi h$ can be bounded by solving the Stein equation (\ref{normal equation}) and then bounding the left-hand side of (\ref{expect}). Associated with \eqref{normal equation} is the Stein operator
\begin{equation} \label{normalop}
{\mathcal{A}}f(x) =  f'(x)-xf(x),
\end{equation} 
defined for differentiable functions $f$.

Over the years, Stein's method has been extended to many other distributions, such as the Poisson \cite{chen 0}, multinomial \cite{loh}, exponential \cite{chatterjee}, \cite{pekoz1}, Laplace \cite{pike}, variance-gamma \cite{gaunt}, and the gamma distribution \cite{luk}, \cite{nourdin1}, which we develop further in this paper.  For multivariate normal approximations, the method was first adapted in  \cite{barbour2} and  \cite{gotze}, viewing the normal distribution as the stationary distribution of an Ornstein-Uhlenbeck diffusion, and using the generator of this diffusion as a characterising operator \eqref{normalop} for the normal distribution.

This generator approach was used by  \cite{luk} to extend Stein's method to the gamma distribution.   Through this method \cite{luk} obtained the following Stein equation for the the $\Gamma(r,\lambda)$ distribution with probability density function $\frac{\lambda^r}{\Gamma(r)}x^{r-1}\mathrm{e}^{-\lambda x}$, $x>0$:
\begin{equation} \label{delight} xf''(x)+(r-\lambda x)f'(x)=h(x)-\Gamma_{r,\lambda} h, 
\end{equation}
where $\Gamma_{r,\lambda} h$ denotes the quantity $\mathbb{E}h(X)$ for $X\sim \Gamma(r,\lambda)$ (this characterisation of the gamma distribution was known from  \cite{diaconis}).  It therefore follows that a Stein operator 
 for the chi-square distribution with $p$ degrees of freedom is 
\begin{equation}\label{chiop} {\mathcal{A}}_p f(x) = xf''(x)+\frac{1}{2}(p- x)f'(x),
\end{equation}
defined for all twice differentiable functions $f$, 
and a 
Stein equation for the chi-square distribution with $p$ degrees of freedom is 
\begin{equation}\label{chieqn}xf''(x)+\frac{1}{2}(p- x)f'(x)=h(x)-\chi_{(p)}^2h,
\end{equation}
where $\chi_{(p)}^2 h$ denotes the quantity $\mathbb{E}h(X)$ for $X\sim\chi_{(p)}^2$.  

\cite{luk}  obtained the second essential ingredient of Stein's method for gamma approximation by bounding the derivatives of the solution $f$ of the gamma Stein equation (\ref{delight}).  Let $h^{(k)}$ denote the $k$-th derivative of a ($k$ times differentiable) function $h$, and let 
\begin{align*}
\mathcal{C}_{\lambda,k}&= \{h:\mathbb{R}^+\rightarrow\mathbb{R}:  \exists c>0, a < \lambda \mbox{ such that } \forall  x \in \mathbb{R}, \; \ell = 0, 1, \ldots, k-1,\\
&\quad\quad | h^{(\ell)}(x) | \le c\mathrm{e}^{ax} \mbox{ and } h^{(k-1)} \mbox{ is absolutely continuous} \} .
\end{align*}
 Then, for $h\in \mathcal{C}_{\lambda,k}$,
\begin{equation}\label{lukluk2}\|f^{(k)}\|\leq\frac{\|h^{(k)}\|}{k\lambda}, \qquad k\geq 1,
\end{equation}
where $\|f\|:=\|f\|_\infty=\sup_{x>0}|f(x)|$.

An alternative bound was obtained by  \cite{gaunt thesis}, improving a bound of  \cite{pickett}.  If $h\in \mathcal{C}_{\lambda,k-1}$ then
\begin{equation}\label{gamma22}\|f^{(k)}\|\leq \bigg\{\frac{\sqrt{2\pi}+\mathrm{e}^{-1}}{\sqrt{r+k-1}}+\frac{2}{r+k-1}\bigg\} \|h^{(k-1)}\|,  \qquad k\geq 1,
\end{equation}
where $h^{(0)}\equiv h$.  (Indeed \cite{luk}, \cite{gaunt thesis} and \cite{pickett} imposed stronger conditions on the test function $h$, although by slightly modifying their proofs one can weaken the assumptions to those stated above.)  The bound \eqref{gamma22} involves one fewer derivative of the test function $h$ than \eqref{lukluk2}, and also - and more importantly in the context of chi-square approximation - \eqref{gamma22} involves the shape parameter $r$.  For the $\chi_{(p)}^2$ distribution, inequality (\ref{lukluk2}) gives a bound $\|f^{(k)}\|\leq\frac{2}{k}\|h^{(k)}\|$, whereas bound (\ref{gamma22}) is of order $p^{-1/2}$ for large $p$.  

In Section 2, we obtain new bounds for the solution of the gamma Stein equation.  Of particular interest is the bound of Theorem \ref{gamthm33}, which is of order $p^{-1}$ for large $p$: 
\begin{equation}\label{useful1}\|f^{(k)}\|\leq\frac{2}{r+k-1}\big(3\|h^{(k-1)}\|+2\lambda\|h^{(k-2)}\|\big), \qquad k\geq 2.
\end{equation}
The $p^{-1}$ rate, which is optimal (see Remark \ref{optimalpr}), means that (\ref{useful1}) is more suited to the applications  considered in this paper than either (\ref{lukluk2}) or (\ref{gamma22}).

The rest of this paper applies of Stein's method for chi-square approximation to problems in statistics.  Before bounding the distance between Pearson's statistic and its limiting chi-square distribution, we illustrate Stein's method for chi-square approximation by considering a simpler example in which the random variables are independent and identically distributed.  Specifically, let $\mathbf{X}$ be a $n \times d$ matrix of i.i.d$.$ random variables $X_{ij}$ with zero mean and unit variance.  Then the statistic
\begin{equation}
\label{cbsj}W_d = \frac{1}{n} \sum_{j=1}^d \bigg( \sum_{i=1}^n X_{ij} \bigg)^2
\end{equation}
is asymptotically $\chi^2_{(d)}$ distributed, by the central limit theorem.  From the Berry-Ess\'{e}en theorem, one might
expect $W_d$ to converge (in the weak convergence sense) at a rate of order $n^{-1/2}$.  In contrast, in Theorem \ref{rightresult}, through the use of symmetry arguments, we are able to obtain bounds of order $n^{-1}$ for smooth test functions.  Our bound (\ref{useful1}) for the solution of the Stein equation allows an improvement on the $O(dn^{-1})$ and $O(d^{1/2}n^{-1})$ bounds that would result from an application of (\ref{lukluk2}) or (\ref{gamma22}).  Though, for non-smooth test functions, we expect a $n^{-1/2}$ rate to be optimal; see Remark \ref{Binomial_remark}.      

In Section 4, we use Stein's method for chi-square approximation to obtain bounds for the distance between Pearson's statistic and its limiting distribution.  This is a more challenging application, because, unlike (\ref{cbsj}), Pearson's statistic cannot be written in terms of i.i.d$.$ random variables (knowledge of whether the outcome of the $j$-th trial falls in the first $m-1$ cells allows one to determine whether it fell in the $m$-th cell).  
In particular, by using symmetry considerations, we are able to obtain a bound of order $n^{-1}$ for smooth test functions (Theorem \ref{Pearson_thm_non_int}).  This is the first $O(n^{-1})$ bound for the rate of convergence of Pearson's statistic that holds for all $m\geq2$.  While  \cite{gu03} proved that the rate of convergence of Pearson's statistic in Kolmogorov distance is $O(n^{-1})$ for all $m\geq6$, they did not give an explicit bound.  In Theorem \ref{pearsqrtn}, we obtain a $O(n^{-1/2})$ bound that holds for smooth test functions which has smaller constants and a better dependence on $m$ than the bound of Theorem \ref{Pearson_thm_non_int}.  Both of the  bounds depend on $n$ and the null hypothesis cell classification probabilities $p_1,\ldots,p_m$ in the correct manner, in that they tend to zero if and only if $np_*\rightarrow\infty$, where $p_*=\min_{1\leq i\leq m}p_i$.  A simple consequence of these bounds is a Kolmogorov distance bound for the rate of convergence of Pearson's statistic (Corollary \ref{kolcol}).  The dependence on $n$ is suboptimal, but Corollary \ref{kolcol} is the first Kolmogorov distance bound for Pearson's statistic that tends to zero if and only if $np_*\rightarrow\infty$. 

The rest of the article is organised as follows.  Section 2 gives new bounds for the derivatives of the solution of the gamma Stein equation.  In Section 3, we demonstrate Stein's method for chi-square approximation by considering an example in which the random variables are independent and identically distributed.  In particular, symmetry considerations can be used to obtain bounds of order $n^{-1}$.  In Section 4, we obtain bounds for the distance between Pearson's statistic and its limiting chi-square distribution, one of which is of order $n^{-1}$ for smooth test functions.  A bound for the rate of convergence of Pearson's statistic in Kolmogorov distance appears as a corollary.  Proofs of technical results are postponed to  Section \ref{appendix}.

\section{Stein's method for the gamma distribution}  

First we  briefly review some of the existing literature of Stein's method for the gamma distribution.  We shall need some of this theory to obtain our  bounds for the derivatives of the solution of the gamma Stein equation.

The following characterisation of the gamma distribution (see  \cite{diaconis} and  \cite{luk}) is the starting point for Stein's method for gamma approximation.  The random variable $X$ has the $\Gamma(\lambda,r)$ distribution if and only if
\begin{equation} \label{delight333} \mathbb{E}[Xf''(X)+(r-\lambda X)f'(X)]=0 
\end{equation}
for all twice differentiable functions $f:\mathbb{R}^+\rightarrow\mathbb{R}$ which are such that the expectations $\mathbb{E}|Zf''(Z)|$, $\mathbb{E}|f'(Z)|$ and $\mathbb{E}|Zf'(Z)|$ are finite for $Z\sim \Gamma(r,\lambda)$.  This characterisation leads to the $\Gamma(r,\lambda)$ Stein equation (\ref{delight}).

It is straightforward to verify that
\begin{align}\label{soln10}f'(x)&=\frac{1}{xp(x)}\int_0^{x}(h(t)-\Gamma_{r,\lambda}h)p(t)\,\mathrm{d}t\\
\label{soln20}&=-\frac{1}{xp(x)}\int_x^{\infty}(h(t)-\Gamma_{r,\lambda}h)p(t)\,\mathrm{d}t,
\end{align}
where $p(x)=\frac{\lambda^r}{\Gamma(r)}x^{r-1}\mathrm{e}^{-\lambda x}$, solves the $\Gamma(r,\lambda)$ Stein equation (see  \cite{stein2}, p$.$ 59, Lemma 1). 
The representations (\ref{soln10}) and (\ref{soln20}) of the solution become difficult to work with if one is interested in bounding higher order derivatives.  \cite{luk} used probabilistic arguments to obtain an alternative representation of the solution for which it was possible to write down a simple formula for derivatives of general order.  From this formula for the $k$-th order derivative \cite{luk} deduced the bound (\ref{lukluk2}), and  \cite{gaunt thesis} used this formula and some more involved calculations to obtain (\ref{gamma22}) (see also  \cite{pickett}).

With the introductory results now stated, we turn our attention to deriving our order $r^{-1}$ bound for the solution of $\Gamma(r,\lambda)$ Stein equation.  In proving our result we do not need to make use of either (\ref{soln10}) or  the formula in \cite{luk} for the solution; the bound follows from a simple application of  (\ref{lukluk2}) and the following lemma.

\begin{lemma}\label{xfr0}Let $f'$ be the solution (\ref{soln10}) of the Stein equation (\ref{delight}).  Then, if $h:\mathbb{R}^+\rightarrow\mathbb{R}$ is bounded,
\begin{equation}\label{xfr1}\|xf''(x)\|\leq 2 \|h-\Gamma_{r,\lambda}h\|\leq 4\|h\|.
\end{equation}
Suppose now that $h\in \mathcal{C}_{\lambda,k}$ for $k\geq 1$.  Then
\begin{equation}\label{xfr2}\|xf^{(k+2)}(x)\|\leq 4\|h^{(k)}\|
\end{equation}
and
\begin{equation}\label{xfr3}\|xf^{(k+1)}(x)\|\leq\frac{4}{\lambda}\big(2+\sqrt{r+k}\big)\|h^{(k)}\|.
\end{equation}
\end{lemma}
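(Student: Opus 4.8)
The plan is to derive all three estimates from one observation about the first‑order equation obtained by differentiating the Stein equation (\ref{delight}), together with the existing bounds (\ref{lukluk2}) and (\ref{gamma22}). The heart of the matter is the claim that the solution $f'$ of (\ref{delight}) satisfies $\|(r-\lambda x)f'(x)\|\le\|h-\Gamma_{r,\lambda}h\|$. Granting this, rearranging (\ref{delight}) as $xf''(x)=(h(x)-\Gamma_{r,\lambda}h)-(r-\lambda x)f'(x)$ and using $|\Gamma_{r,\lambda}h|\le\|h\|$ yields $\|xf''(x)\|\le2\|h-\Gamma_{r,\lambda}h\|\le4\|h\|$, which is (\ref{xfr1}).

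To prove the claim I would split at the point $x_0=r/\lambda$ where $r-\lambda x$ changes sign, and use the two representations (\ref{soln10})--(\ref{soln20}). For $0<x\le r/\lambda$, representation (\ref{soln10}) gives $|(r-\lambda x)f'(x)|\le\|h-\Gamma_{r,\lambda}h\|\,(r-\lambda x)(xp(x))^{-1}\int_0^xp(t)\,\mathrm{d}t$, and the last factor is at most $1$: using $(xp(x))'=(r-\lambda x)p(x)$, the function $\phi(x):=xp(x)-(r-\lambda x)\int_0^xp(t)\,\mathrm{d}t$ has $\phi(0)=0$ and $\phi'(x)=\lambda\int_0^xp(t)\,\mathrm{d}t\ge0$, so $\phi\ge0$. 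For $x\ge r/\lambda$, representation (\ref{soln20}) gives the corresponding bound with $\int_x^\infty p$, and $\bar\phi(x):=xp(x)-(\lambda x-r)\int_x^\infty p(t)\,\mathrm{d}t$ satisfies $\bar\phi(\infty)=0$ (the boundary term and the tail integral both vanish at infinity) and $\bar\phi'(x)=-\lambda\int_x^\infty p(t)\,\mathrm{d}t\le0$, so $\bar\phi\ge0$.

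For the higher derivatives, differentiating (\ref{delight}) $k$ times gives, by a routine induction valid for $h\in\mathcal{C}_{\lambda,k}$,
\[
xf^{(k+2)}(x)+(r+k-\lambda x)f^{(k+1)}(x)=h^{(k)}(x)+k\lambda f^{(k)}(x)=:\rho_k(x),
\]
which is an equation of the same shape as (\ref{delight}) with shape parameter $r+k$ and right‑hand side $\rho_k$. Since $f^{(k+1)}$ is bounded by (\ref{gamma22}) and $r+k>0$, the boundary terms $x^{r+k}\mathrm{e}^{-\lambda x}f^{(k+1)}(x)$ vanish at $0$ and at $\infty$; integrating the identity $\frac{\mathrm{d}}{\mathrm{d}x}\big(x^{r+k}\mathrm{e}^{-\lambda x}f^{(k+1)}(x)\big)=x^{r+k-1}\mathrm{e}^{-\lambda x}\rho_k(x)$ over $(0,\infty)$ shows $\int_0^\infty x^{r+k-1}\mathrm{e}^{-\lambda x}\rho_k(x)\,\mathrm{d}x=0$, so $f^{(k+1)}$ admits both integral representations analogous to (\ref{soln10})--(\ref{soln20}) with $p$ replaced by the $\Gamma(r+k,\lambda)$ density. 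Rerunning the two‑case monotonicity argument of the previous paragraph (with $\phi,\bar\phi$ built from that density) then gives $\|(r+k-\lambda x)f^{(k+1)}(x)\|\le\|\rho_k\|$, and (\ref{lukluk2}) gives $\|\rho_k\|\le\|h^{(k)}\|+k\lambda\|f^{(k)}\|\le2\|h^{(k)}\|$. Hence $\|xf^{(k+2)}(x)\|\le\|\rho_k\|+\|(r+k-\lambda x)f^{(k+1)}(x)\|\le2\|\rho_k\|\le4\|h^{(k)}\|$, which is (\ref{xfr2}); and writing $\lambda xf^{(k+1)}(x)=(\lambda x-(r+k))f^{(k+1)}(x)+(r+k)f^{(k+1)}(x)$, the first term is bounded by $\|\rho_k\|\le2\|h^{(k)}\|$ and the second by $(r+k)\|f^{(k+1)}\|\le\big((\sqrt{2\pi}+\mathrm{e}^{-1})\sqrt{r+k}+2\big)\|h^{(k)}\|$ via (\ref{gamma22}); dividing by $\lambda$ and using $\sqrt{2\pi}+\mathrm{e}^{-1}<4$ gives (\ref{xfr3}).

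The step I expect to be the main obstacle is the careful justification of the two‑representation argument at the $k$‑th level: one must verify that $f^{(k+1)}$ really is the particular solution of the differentiated equation with no homogeneous component (equivalently, $x^{r+k}\mathrm{e}^{-\lambda x}f^{(k+1)}(x)\to0$ as $x\downarrow0$), that $\rho_k$ is bounded, and that the compatibility condition $\int_0^\infty x^{r+k-1}\mathrm{e}^{-\lambda x}\rho_k(x)\,\mathrm{d}x=0$ holds so that the ``upper'' representation is available; each of these rests on the a priori boundedness of $f^{(k)}$ and $f^{(k+1)}$ provided by (\ref{lukluk2}) and (\ref{gamma22}) and on $h\in\mathcal{C}_{\lambda,k}$. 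By contrast the monotonicity computations for $\phi$ and $\bar\phi$ are short, once one notes $(x^{\alpha}\mathrm{e}^{-\lambda x})'=(\alpha-\lambda x)x^{\alpha-1}\mathrm{e}^{-\lambda x}$.
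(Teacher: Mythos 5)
Your proof is correct and follows essentially the same route as the paper: you establish the key estimate $\|(r-\lambda x)f'(x)\|\le\|h-\Gamma_{r,\lambda}h\|$ by splitting at $x=r/\lambda$ and using the two representations (\ref{soln10})--(\ref{soln20}), then recognize the $k$-times differentiated equation as the $\Gamma(r+k,\lambda)$ Stein equation with right-hand side $\rho_k=h^{(k)}+k\lambda f^{(k)}$ (verifying that $f^{(k+1)}$ is indeed the bounded solution, which the paper treats more briefly via the characterisation (\ref{delight333})), and invoke (\ref{lukluk2}) and (\ref{gamma22}) as black boxes. Your treatment of (\ref{xfr3}), which bounds $(\lambda x-(r+k))f^{(k+1)}$ in one stroke by $\|\rho_k\|\le 2\|h^{(k)}\|$ rather than bounding the three terms $h^{(k)}$, $k\lambda f^{(k)}$ and $xf^{(k+2)}$ separately as the paper does, in fact yields the slightly sharper constant $\frac{4}{\lambda}(1+\sqrt{r+k})$, which of course implies the stated bound.
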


\begin{proof}
First we show that, for bounded $h$,
\begin{equation}\label{chdbch}\|xf''(x)\|\leq 2\|h-\Gamma_{r,\lambda}h\|.
\end{equation}
From \eqref{delight}  and the triangle inequality, 
$$ |x f''(x) | \le  \| h-\Gamma_{r,\lambda}h  \|  + |  (r - \lambda x) f'(x)|. $$ 
Now, for $0 < x < \frac{\lambda}{r}$, from \eqref{soln10}, 
\begin{align*}
| (r - \lambda x) f'(x)| &=  \frac{ r - \lambda x }{xp(x)}\int_0^{x}(h(t)-\Gamma_{r,\lambda}h)p(t)\,\mathrm{d}t\\
&\le    \| h-\Gamma_{r,\lambda}h  \|   \frac{ r - \lambda x }{xp(x)}\int_0^{x} p(t)\,\mathrm{d}t.
\end{align*}
Note that with $p(x)=\frac{\lambda^r}{\Gamma(r)}x^{r-1}\mathrm{e}^{-\lambda x}$ it holds for all $t>0$ that 
$ (t p(t))' = (r - \lambda t) p(t).$
Hence, for $0 < x < \frac{\lambda}{r}$, 
\begin{align*}
 0 < \frac{ r - \lambda x }{xp(x)}\int_0^{x} p(t)\,\mathrm{d}t &\le 
  \frac{ r - \lambda x }{xp(x)}\int_0^{x}  \frac{ r - \lambda t}{r - \lambda x} p(t)\,\mathrm{d}t \\
  &=   \frac{ 1 }{xp(x)}\int_0^{x}   (t p(t))' \,\mathrm{d}t =1.
\end{align*}
With an analogous argument for the case $x > \frac{\lambda}{r}$, using representation \eqref{soln20}, the bound \eqref{chdbch} follows, and with it 
inequality (\ref{xfr1}).

\medskip We now use (\ref{xfr1}) to prove inequality (\ref{xfr2}).  The technique used to achieve this is similar to the proof of Proposition 4.2 of  \cite{dobler beta} (the technique has since been further developed in \cite{dgv15}).  The $\Gamma(r,\lambda)$ Stein equation with a straightforward induction on $k$ gives 
\begin{equation}\label{kderv}xf^{(k+2)}(x)+(r+k-\lambda x)f^{(k+1)}(x)-k\lambda f^{(k)}(x)=h^{(k)}(x).
\end{equation}
Note that, since $h\in\mathcal{C}_{\lambda,k}$, it follows from (\ref{lukluk2}) and (\ref{gamma22}) that $f^{(k)}(x)$ and $f^{(k-1)}(x)$ exist and are bounded, and consequently $xf^{(k+2)}(x)$ also exists and is bounded.  Rearranging (\ref{kderv}), 
\begin{equation}\label{kderv11}xf^{(k+2)}(x)+(r+k-\lambda x)f^{(k+1)}(x)=h^{(k)}(x)+k\lambda f^{(k)}(x),
\end{equation}
which we recognise as the $\Gamma(r+k,\lambda)$ Stein equation with $f$ replaced by $f^{(k)}$, and $h(x)-\Gamma_{r,\lambda}h$ replaced by $\tilde{h}(x):=h^{(k)}(x)+k\lambda f^{(k)}(x)$.  Now $\tilde{h}(x)$ is bounded and has zero mean with respect to the $\Gamma(r+k,\lambda)$ distribution.  Indeed, for $X\sim\Gamma(r+k,\lambda)$,
\begin{align*}\mathbb{E}\tilde{h}(X)=\mathbb{E}[Xf^{(k+2)}(X)+(r+k-\lambda X)f^{(k+1)}(X)]=0,
\end{align*}
by characterisation (\ref{delight333}) of the $\Gamma(r+k,\lambda)$ distribution, since the expectations $\mathbb{E}|Xf^{(k+2)}(X)|$, $\mathbb{E}|f^{(k+1)}(X)|$ and $\mathbb{E}|Xf^{(k+1)}(X)|$ are finite.  Hence, it follows from (\ref{xfr1}) that
\begin{equation*}
\|xf^{(k+2)}(x)\|\leq 2\|h^{(k)}(x)+k\lambda f^{(k)}(x)\|\leq 2(\|h^{(k)}\|+k\lambda \|f^{(k)}\|)\leq 4\|h^{(k)}\|,
\end{equation*}
where the final inequality follows from (\ref{lukluk2}).

\medskip Finally, we prove inequality (\ref{xfr2}).  From (\ref{kderv}), 
\begin{equation*}\lambda xf^{(k+1)}(x)=-h^{(k)}(x)-k\lambda f^{(k)}(x)+(r+k)f^{(k+1)}(x)+xf^{(k+2)}(x),
\end{equation*}
and applying the triangle inequality gives
\begin{equation*}\lambda \|xf^{(k+1)}\|\leq\|h^{(k)}\|+k\lambda \|f^{(k)}\|+(r+k)\|f^{(k+1)}\|+\|xf^{(k+2)}(x)\|.
\end{equation*}
Inequality (\ref{xfr3}) now follows on bounding $\|f^{(k)}\|$, $\|f^{(k+1)}\|$ and $\|xf^{(k+2)}(x)\|$ using inequalities (\ref{lukluk2}), (\ref{gamma22}) and (\ref{xfr2}), respectively.
\end{proof}

The following theorem  follows easily from Lemma \ref{xfr0}.

\begin{theorem}\label{gamthm33}Let $k\geq2$ and suppose $h\in \mathcal{C}_{\lambda,k-1}$ and that $h^{(k-2)}$ is bounded.  Then
\begin{equation}\label{rorder}\|f^{(k)}\|\leq\frac{2}{r+k-1}\big(3\|h^{(k-1)}\|+2\lambda\|h^{(k-2)}\|\big),
\end{equation}
where $h^{(0)}\equiv h$.  In particular, 
 for the derivatives of the solution of the $\chi_{(p)}^2$ Stein equation (\ref{chieqn}), we have, for $k \ge 2$,
\begin{equation}\label{chibound} \|f^{(k)}\|\leq\frac{4}{p+2}\big(3\|h^{(k-1)}\|+\|h^{(k-2)}\|\big).
\end{equation} 
\end{theorem}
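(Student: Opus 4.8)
The plan is to prove \eqref{rorder} by exploiting the trivial identity $r+k-1=(r+k-1-\lambda x)+\lambda x$ together with two estimates that are already essentially contained in Lemma \ref{xfr0}: a uniform bound on $\|(r+k-1-\lambda x)f^{(k)}(x)\|$ and a uniform bound on $\|xf^{(k)}(x)\|$. Crucially, both of these are $O(1)$ in the shape parameter, so after dividing through by $r+k-1$ one obtains the desired $O((r+k-1)^{-1})$ rate, from which \eqref{chibound} is immediate by specialising $r=p/2$, $\lambda=1/2$.

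First I would write down the $(k-1)$-times differentiated Stein equation, that is, \eqref{kderv} with $k$ replaced by $k-1$,
\[ xf^{(k+1)}(x)+(r+k-1-\lambda x)f^{(k)}(x)-(k-1)\lambda f^{(k-1)}(x)=h^{(k-1)}(x), \]
and rearrange to isolate $(r+k-1-\lambda x)f^{(k)}(x)$. The triangle inequality, together with the bound $(k-1)\lambda\|f^{(k-1)}\|\le\|h^{(k-1)}\|$ from \eqref{lukluk2} and the bound $\|xf^{(k+1)}(x)\|\le4\|h^{(k-1)}\|$ from \eqref{xfr2} (applied with $k$ replaced by $k-1$, which is legitimate since $h\in\mathcal{C}_{\lambda,k-1}$ and $k-1\ge1$), then yields $\|(r+k-1-\lambda x)f^{(k)}(x)\|\le6\|h^{(k-1)}\|$. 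Next, \eqref{xfr2} with $k$ replaced by $k-2$ when $k\ge3$, and \eqref{xfr1} when $k=2$, gives $\|xf^{(k)}(x)\|\le4\|h^{(k-2)}\|$; here the hypothesis that $h^{(k-2)}$ be bounded is exactly what makes this estimate usable. Finally, I would combine the two via $(r+k-1)|f^{(k)}(x)|\le|(r+k-1-\lambda x)f^{(k)}(x)|+\lambda|xf^{(k)}(x)|\le6\|h^{(k-1)}\|+4\lambda\|h^{(k-2)}\|$, take the supremum over $x>0$, and divide by $r+k-1$; this is \eqref{rorder}. For \eqref{chibound}, put $r=p/2$ and $\lambda=1/2$ in \eqref{rorder} and use $r+k-1=(p+2k-2)/2\ge(p+2)/2$ for $k\ge2$.

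Given Lemma \ref{xfr0}, the argument is short and there is no serious computational obstacle; the only real content is spotting the decomposition of $r+k-1$. It is perhaps worth recording why the obvious alternatives are too weak: working directly from the integral representations \eqref{soln10}--\eqref{soln20} for $f^{(k)}$, or feeding the modified (mean-zero) test function $h^{(k-1)}+(k-1)\lambda f^{(k-1)}$ into the existing bound \eqref{gamma22}, only recovers the $O((r+k-1)^{-1/2})$ rate, since the factor $x\,p(x)$ in the denominator of the solution is of order $\sqrt{r}$ near the mode of the density. The gain to order $(r+k-1)^{-1}$ comes precisely from invoking the Stein equation once more to trade the unbounded multiplier $\lambda x$ against the quantity $r+k-1-\lambda x$, since $f^{(k)}$ multiplied by \emph{either} of these is controlled by Lemma \ref{xfr0}. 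One should also check in passing that $f^{(k-1)}$, $f^{(k)}$ and $f^{(k+1)}$ exist and are bounded under the stated hypotheses, which follows from \eqref{lukluk2} and \eqref{gamma22} just as in the proof of the lemma.
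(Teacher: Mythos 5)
Your proof is correct and follows essentially the same route as the paper: both start from the $(k-1)$-times differentiated Stein equation \eqref{kderv}, isolate $(r+k-1)f^{(k)}$ (your decomposition $r+k-1=(r+k-1-\lambda x)+\lambda x$ is just a two-step phrasing of the paper's rearrangement), and bound the terms using \eqref{lukluk2}, \eqref{xfr2} with the index shifted down by one and by two, and \eqref{xfr1} in the case $k=2$, arriving at the same constants. The specialisation $r=p/2$, $\lambda=1/2$ with $p+2k-2\ge p+2$ for \eqref{chibound} also matches the paper exactly.
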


\begin{proof}From (\ref{kderv}) it follows that
\begin{equation*}f^{(k)}(x)=\frac{1}{r+k-1}\big\{h^{(k-1)}(x)+\lambda(k-1)f^{(k-1)}(x)-xf^{(k+1)}(x)+\lambda xf^{(k)}(x)\big\},
\end{equation*}
and applying the triangle inequality gives
\begin{equation*}\|f^{(k)}\|\leq\frac{1}{r+k-1}\big\{\|h^{(k-1)}\|+\lambda(k-1)\|f^{(k-1)}\|+\|xf^{(k+1)}(x)\|+\lambda \|xf^{(k)}(x)\|\big\}.
\end{equation*}
Using (\ref{lukluk2}) to bound $\|f^{(k-1)}\|$ and using (\ref{xfr2}) to bound $\|xf^{(k)}(x)\|$ and $\|xf^{(k+1)}(x)\|$ gives the desired bound.  In the case $k=2$, we use (\ref{xfr1}) to bound $\|xf^{(k)}(x)\|$. For the last assertion take $r=\frac12 p $ and $\lambda = \frac12$, yielding
$$\|f^{(k)}\|\leq\frac{4}{p+2k-2}\big(3\|h^{(k-1)}\|+\|h^{(k-2)}\|\big);  $$
 \eqref{chibound} follows by noting that $k-1 \ge 1$.  
\end{proof}

\begin{remark}\label{optimalpr}The bound (\ref{rorder}) of Theorem \ref{gamthm33} is of order $r^{-1}$ as $r\rightarrow\infty$.  This is indeed the optimal order, which can be seen as follows. Evaluating both sides of the $\Gamma(r,\lambda)$ Stein equation at $x=0$ gives 
\[f'(0)=\frac{1}{r}[h(0)-\Gamma_{r,\lambda}h].\]
Also, evaluating both sides of equation (\ref{kderv11}) at $x=0$ gives that
\[f^{(k)}(0)=\frac{1}{r+k-1}\{h^{(k-1)}(0)+(k-1)\lambda f^{(k-1)}(0)\}, \qquad k\geq 2.\]
We therefore have that
\[f''(0)=\frac{1}{r+1}\bigg(h'(0)+\frac{\lambda}{r} [h(0)-\Gamma_{r,\lambda}h]\bigg),\]
which for a general test function $h$ is of order $r^{-1}$.  Repeating this procedure shows that the optimal order for $\|f^{(k)}\|$ is $O(r^{-1})$ as $r\rightarrow\infty$.
\end{remark}

\section{A squared central limit theorem}

let $\mathbf{X}$ be a $n \times d$ matrix of i.i.d$.$ random variables $X_{ij}$ with zero mean and unit variance.  In this section, we use symmetry considerations to obtain a bound of order $n^{-1}$ for the distance between the statistic
\begin{equation}
W_d = \frac{1}{n} \sum_{j=1}^d \bigg( \sum_{i=1}^n X_{ij} \bigg)^2
\label{W_d_eqn}
\end{equation}
and its limiting $\chi_{(d)}^2$ distribution.  To elucidate the proof, we firstly consider the case $d=1$; the general $d$ case follows easily as $W_d$ is a linear sum of $W_1$.  For ease of notation, set $W \equiv W_1$ and $X_i \equiv X_{i1}$.  Let  $C_b^k(\mathbb{R}^+)$ denote the class of bounded functions $h:\mathbb{R}^+\rightarrow\mathbb{R}$ for which $h^{(k)}$ exists and derivatives up to $k$-th order are bounded.  Stein's method for the chi-square distribution yields the following result:

\begin{theorem}\label{chiiid}

Let  $X,X_1, \ldots, X_n$ be i.i.d$.$ random variables with $\mathbb{E}X=0$, $\mathbb{E}X^2=1$ and $\mathbb{E}X^8<\infty$,
and let $W \equiv W_1$ be defined as per equation \eqref{W_d_eqn}.  Then, for $h \in C_b^3(\mathbb{R}^+)$, 
\begin{equation} |\mathbb{E} h(W) - \chi^2_{(1)}h| \leq \frac{4\mathbb{E}X^8}{3n} \{ \alpha_0 \| h\| +  \alpha_1 \|h'\| + \alpha_2 \|h''\| + \alpha_3 \|h^{(3)}\| \}, \label{Katie} \end{equation}
where $\chi^2_{(1)}h$ denotes the expectation of $h(T)$ for $T \sim \chi^2_{(1)}$ and
\begin{eqnarray*}
\alpha_0 &=& 2 + 69 |\mathbb{E}X^3|   , \\
\alpha_1 &=&  38 + 654 |\mathbb{E}X^3|,   \\
\alpha_2 &=&   203+ 1781 |\mathbb{E}X^3|  , \\
\alpha_3 &=& 321+ 1320 |\mathbb{E}X^3|    .
\end{eqnarray*} 
\end{theorem}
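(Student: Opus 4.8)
The plan is to apply Stein's method for the $\chi^2_{(1)}$ distribution directly to $W$. Let $f$ solve the Stein equation \eqref{chieqn} with $p=1$, so that $\mathbb{E}h(W)-\chi^2_{(1)}h=\mathbb{E}[\mathcal{A}_1 f(W)]=\mathbb{E}\bigl[Wf''(W)+\tfrac12(1-W)f'(W)\bigr]$. Write $S=\sum_{i=1}^n X_i$ (so $W=S^2/n$) and set $F(s)=f(s^2/n)$. The conceptually central step is the algebraic identity, immediate from the chain rule,
\[
Wf''(W)+\tfrac12(1-W)f'(W)=\tfrac n4\Bigl(F''(S)-\tfrac Sn F'(S)\Bigr),
\]
which recasts $\mathcal{A}_1 f$ at $W$ as $n/4$ times the $N(0,n)$ Stein operator applied to $F'$ at $S$. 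This is where the extra symmetry of the problem sits: $W$ is a function of $S^2$, so $\mathcal{A}_1 f(W)$ is an even function of $S$, and the skewness of $X$ can affect the answer only through expectations of odd functions of $S$, which are negligible. It then suffices to show $\mathbb{E}\bigl[F''(S)-\tfrac Sn F'(S)\bigr]=O(n^{-2})$.

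I would estimate this by a leave-one-out expansion. Writing $S_i=S-X_i$, expand $\mathbb{E}[\tfrac Sn F'(S)]=\tfrac1n\sum_i\mathbb{E}[X_iF'(S_i+X_i)]$ by Taylor's theorem in $X_i$ to third order and use $\mathbb{E}X_i=0$, $\mathbb{E}X_i^2=1$ and $X_i\perp S_i$; combining with $\mathbb{E}[F''(S)]=\tfrac1n\sum_i\mathbb{E}[F''(S)]$ and one further expansion of $\mathbb{E}[F''(S_i+X_i)-F''(S_i)]$, one obtains
\[
\mathbb{E}\Bigl[F''(S)-\tfrac Sn F'(S)\Bigr]=-\,\tfrac{\mathbb{E}X^3}{2n}\sum_{i=1}^n\mathbb{E}[F'''(S_i)]+\bigl(\text{Taylor remainders in }F^{(4)}\bigr).
\]
Differentiating $F(s)=f(s^2/n)$ repeatedly produces powers $s^j/n^j$: $F^{(4)}(s)=n^{-2}\bigl(16x^2f^{(4)}(x)+48xf'''(x)+12f''(x)\bigr)$ with $x=s^2/n$, and $F'''(s)=\tfrac s{n^2}\Phi(s^2/n)$ with $\Phi(x)=8xf'''(x)+12f''(x)$ bounded. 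Since $\mathbb{E}S_i^{2k}=O(n^k)$, each term, evaluated at the concentrated sum $S_i$, is $O(n^{-2})$, and the $n/4$ prefactor then yields the asserted $O(n^{-1})$ rate. The Taylor remainders are bounded using $|\xi_i|\le|S_i|+|X_i|$ and the boundedness of $f',f'',f''',f^{(4)}$; this is where $\mathbb{E}X^8<\infty$ enters, since a product of the Taylor coefficient $X_i^4$ with the term $\xi_i^4\le 8(|S_i|^4+|X_i|^4)$ appearing in $F^{(4)}(\xi_i)$ produces eighth moments of $X$.

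The main obstacle is the skewness term $-\tfrac{\mathbb{E}X^3}{2n}\sum_i\mathbb{E}[F'''(S_i)]$. Estimating it crudely via $|\mathbb{E}[F'''(S_i)]|=n^{-2}|\mathbb{E}[S_i\Phi(S_i^2/n)]|\le n^{-2}\|\Phi\|\,\mathbb{E}|S_i|=O(n^{-3/2})$ would leave a contribution of order $\mathbb{E}X^3\,n^{-1/2}$ after the $n/4$ prefactor — the wrong rate, and precisely what a naive normal approximation produces. The symmetry rescues matters: $s\mapsto s\Phi(s^2/n)$ is odd and $\mathbb{E}S_i=0$, so only the fluctuation of $\Phi(S_i^2/n)$ about a constant contributes, and $\mathbb{E}[S_i\Phi(S_i^2/n)]$ is $O(1)$ in $n$ rather than $O(\sqrt n)$. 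Establishing this bound — $|\mathbb{E}[S_i\Phi(S_i^2/n)]|\le C\,\mathbb{E}|X|^3\,(\|\Phi\|+\|\Phi'\|+\|\Phi''\|)$ — is exactly the sort of technical estimate I would package as a lemma and relegate to the appendix; its proof (by comparison of $S_i$ with a Gaussian of the same variance, against which the odd integrand has zero mean, and a careful control of the discrepancy) is delicate because $h\in C_b^3$ only, so $\|f^{(5)}\|$ need not be finite and one must use $\|f^{(4)}\|$ and $\|xf^{(5)}\|$ from Section 2 in place of $\|f^{(5)}\|$. Fed back in, the skewness term contributes at orders $(\mathbb{E}X^3)^2/n$ and $|\mathbb{E}X^3|\,\mathbb{E}|X|^5/n^2$, both $\le C\mathbb{E}X^8/n$ by $(\mathbb{E}X^3)^2\le\mathbb{E}X^4\le\mathbb{E}X^8$ and $\mathbb{E}|X|^5\le\mathbb{E}X^8$ (using $\mathbb{E}X^8\ge(\mathbb{E}X^2)^4=1$).

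Finally, collecting all the contributions, applying the triangle inequality, replacing each $\|f^{(k)}\|$ by its bound from Section 2 in terms of $\|h\|,\|h'\|,\|h''\|,\|h'''\|$ — in particular $\|f''\|\le\tfrac43(3\|h'\|+\|h\|)$ and $\|f^{(4)}\|\le\tfrac43(3\|h'''\|+\|h''\|)$ from Theorem \ref{gamthm33}, which bring $\|h\|$ and $\|h'''\|$ into the estimate — and bounding every moment $\mathbb{E}|X|^j$ with $j\le 8$ by $\mathbb{E}X^8$, one reads off the explicit constants $\alpha_0,\dots,\alpha_3$. I expect this last step — the bookkeeping of the numerical constants across the several remainder terms, and the organisation of the estimates so that the moment dependence collapses to a single factor $\mathbb{E}X^8$ together with a separate $|\mathbb{E}X^3|$ — to be the most laborious part, while the conceptual content is the identity of the first step and the lemma controlling $\mathbb{E}[S_i\Phi(S_i^2/n)]$.
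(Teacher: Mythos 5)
Your proposal follows essentially the same route as the paper's proof: the identity turning the $\chi^2_{(1)}$ Stein operator at $W=S^2$ into the normal Stein operator applied to the derivative of $g(s)=\frac14 f(s^2)$, leave-one-out Taylor expansions producing $O(n^{-1})$ remainders plus a skewness term, and a second normal-approximation (Stein) step exploiting that the skewness term is the expectation of an odd function with zero Gaussian mean, followed by converting $\|f^{(k)}\|$ into $\|h^{(k-1)}\|,\|h^{(k-2)}\|$ via Theorem \ref{gamthm33}. Your ``Gaussian comparison'' lemma is exactly the paper's second application of the normal Stein equation (its Lemma \ref{normaltech} and the terms $R_4,R_5$), the only minor organisational difference being that the paper bounds $\psi''$ through the Stein equation itself so that only $f^{(4)}$ (never $f^{(5)}$ or $\|xf^{(5)}\|$) is needed.
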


\begin{proof}
The proof comes in two parts. The first part includes expansions and bounds, whereas the second part includes the symmetry argument. 

\medskip
\noindent{\bf{Proof Part I: Expansions and bounding}}

\medskip 
 Let $S = \frac{1}{\sqrt{n}} \sum_{1}^n X_j$, so that $W = S^2$.  We also let $S^{(i)}=S-\frac{1}{\sqrt{n}}X_i$ and note that $S^{(i)}$ and $X_i$ are independent.  Using the Stein equation for a $\chi^2_{(1)}$ random variable, we require a bound on the expression $\mathbb{E}[ W f''(W) + \frac{1}{2} (1 - W)f'(W)]$.  Define the function $g\!: \mathbb{R} \rightarrow \mathbb{R}$ by $g(s) = \frac{1}{4}f(s^2).$  Then simple differentiation shows that
\begin{equation}
 \mathbb{E}[ W f''(W) + \tfrac{1}{2}  (1 - W)f'(W)]  = \mathbb{E} [g''(S) -  S g'(S)].
\label{Mercury}
\end{equation}
The right-hand side of this equality can be recognised as the quantity to be bounded in the Stein equation for the standard normal distribution, with $g'$ instead of $g$ (see equation \eqref{normal equation}). Thus, our problem is reduced from that of a $\chi^2_{(1)}$ to that of a normal.

As the $X_i$ are identically distributed,
\[\mathbb{E} S g'(S)=\frac{1}{\sqrt{n}}\sum_{i=1}^n\mathbb{E}X_ig'(S)
=\sqrt{n}\mathbb{E}X_1g'(S).\]
Taylor expansion of $g''(S)$ and $g'(S)$ about $S^{(1)}$, using  independence and that $\mathbb{E}X_1=0$ and $\mathbb{E}X_1^2=1$ gives 
\begin{align*}\mathbb{E} [g''(S) -  S g'(S)]&=\mathbb{E}g''(S^{(1)})+\frac{1}{\sqrt{n}}\mathbb{E}X_1\mathbb{E}g^{(3)}(S^{(1)})+R_1 \\
&\quad-\sqrt{n}\mathbb{E}X_1\mathbb{E}g'(S^{(1)})-\mathbb{E}X_1^2\mathbb{E}g''(S^{(1)})-R_2-R_3\\
&=R_1- N -R_2,
\end{align*}
where
\begin{eqnarray*}R_1&=&\frac{1}{2n }\mathbb{E}X_1^2g^{(4)} \bigg(S^{(1)} + \theta_{1} \frac{X_1}{\sqrt{n}}\bigg), \\
N&=&\frac{\mathbb{E}X^3}{2\sqrt{n}} \mathbb{E} g^{(3)}(S^{(1)}), \mbox{ and } \\
R_2&=&\frac{1}{6n}\mathbb{E} X_1^4 g^{(4)}\bigg(S^{(1)} + \theta_{2}\frac{X_1}{\sqrt{n}}\bigg)
\end{eqnarray*}
for some $\theta_1, \theta_2 \in (0,1)$. 
In order to bound these terms  note that 
\begin{equation*}g^{(4)}(s) =  3 f''(s^2)+ 12 s^2 f^{(3)}(s^2)+4 s^4 f^{(4)}(s^2) 
\end{equation*}
 so that 
\bea \label{gbound}
|  g^{(4)}(s) |  \le  3 \| f''\| + 12 s^2 \|f^{(3)}\| + 4 s^4 \| f^{(4)}\|. 
\ena
  Let $\xi_{\theta}= S^{(1)} + \theta \frac{X_1}{\sqrt{n}} .$ 
In bounding $R_1$, and throughout this proof, we shall use that, for $\theta\in(0,1)$, 
\begin{equation} \label{oneofthem} 
\mathbb{E}|X_1^p\xi_{\theta}^q|=\mathbb\mathbb{E}\bigg|X_1^p\bigg(S^{(1)}+\theta\frac{X_1}{\sqrt{n}}\bigg)^q\bigg|\leq 2^{q-1}\bigg[\mathbb{E}|X|^p\mathbb{E}|S^{(1)}|^q+\frac{\mathbb{E}|X|^{p+q}}{n^{q/2}}\bigg],
\end{equation}
as $|a+b|^q\leq 2^{q-1}(|a|^q+|b|^q)$ for $q\geq 1$.

We begin by bounding $R_1$;
\begin{align*} 
|R_1| &= \frac{1}{2n}\mathbb{E}|X_1^2\{3f''(\xi_{\theta_1}^2)+12\xi_{\theta_1}^2f^{(3)}(\xi_{\theta}^2)+4\xi_{\theta_1}^4f^{(4)}(\xi_{\theta_1}^2)\}| \\
&\leq \frac{3}{2n}\|f''\|+\frac{12}{n}\|f^{(3)}\|\bigg[\mathbb{E}X^2\mathbb{E}(S^{(1)})^2+\frac{\mathbb{E}X^4}{n}\bigg]\\
&\quad+\frac{16}{n}\|f^{(4)}\|\bigg[\mathbb{E}X^2\mathbb{E}(S^{(1)})^4+\frac{\mathbb{E}X^6}{n^2}\bigg] \\
&\leq \frac{3}{2n}\|f''\|+\frac{12}{n}\|f^{(3)}\|\bigg[1+\frac{\mathbb{E}X^4}{n}\bigg]+\frac{16}{n}\|f^{(4)}\|\bigg[3+\frac{\mathbb{E}X^4}{n}+\frac{\mathbb{E}X^6}{n^2}\bigg],
\end{align*}
using that $\mathbb{E} (S^{(1)})^2 =\frac{n-1}{n} <1$ and the inequality
\[\mathbb{E} (S^{(1)})^4 = \frac{(n-1)\mathbb{E}X^4+3(n-1)(n-2)}{n^2}<3+\frac{\mathbb{E}X^4}{n}\]
to obtain the last inequality.  The bounding of $R_2$ is similar, with the modification that  the order of the moments of $X$ is increased by 2:
\begin{align*}|R_2|&\leq\frac{1}{2n}\mathbb{E}X^4\|f''\|+\frac{4}{n}\|f^{(3)}\|\bigg[\mathbb{E}X^4+\frac{\mathbb{E}X^6}{n}\bigg]\\
&\quad+\frac{16}{3n}\|f^{(4)}\|\bigg[3\mathbb{E}X^4+\frac{(\mathbb{E}X^4)^2}{n}+\frac{\mathbb{E}X^8}{n^2}\bigg].
\end{align*}

It remains to bound $N$.  
By Taylor expanding $g^{(3)}(S^{(1)})$ about $S$, 
\[N=\frac{\mathbb{E}X^3}{2\sqrt{n}} \mathbb{E} g^{(3)}(S)+R_3,\]
where
\[R_3=-\frac{\mathbb{E}X_1^3}{2 n }\mathbb{E}X_1g^{(4)}\bigg(S^{(1)}+\theta_3\frac{X_1}{\sqrt{n}}\bigg)\]
for some $\theta_3\in (0,1)$.  We bound $R_3$ similarly to $R_1$ and $R_2$,
\[|R_3|\leq|\mathbb{E}X^3|\bigg\{\!\frac{3}{2n}\|f''\|+\frac{12}{n}\|f^{(3)}\|\bigg[1+\frac{\mathbb{E}|X|^3}{n}\bigg]+\frac{16}{n}\|f^{(4)}\|\bigg[3+\frac{\mathbb{E}X^4}{n}+\frac{\mathbb{E}|X|^5}{n^2}\bigg]\!\bigg\}.\]

It remains to show that $\frac{\mathbb{E}X^3}{2\sqrt{n}} \mathbb{E} g^{(3)}(S)$ is of order $n^{-1/2}$. Here the symmetry argument enters.

\noindent{\bf{Proof Part II: Symmetry argument for optimal rate}}

\medskip 
We shall approach this problem using a form of normal approximation: since  $S \approx N(0,1)$, the $O(n^{-1/2})$ central limit theorem convergence rate to the standard normal Stein equation can be applied with test functions  $g^{(3)}$. 
Using \eqref{normal equation}  the Stein equation with test function $g^{(3)}$ for a standard normal random variable is given
by
\begin{equation} \psi'(x) - x \psi(x) = g^{(3)}(x) - \Phi g^{(3)} . \label{Stein-normal} \end{equation}
Now the  test function  $g^{(3)}(s) = 3s f''(s^2) + 2 s^3 f^{(3)}(s^2)$ is an odd function ($g^{(3)}(-x) = - g^{(3)}(x)$), and so must then be the density of $g^{(3)}(Z)$ for $Z$ standard normal, and hence must have zero mean, giving that $\Phi g^{(3)} =0$. The following lemma gives bounds on the solution of this Stein equation; its proof is in Section \ref{appendix}. 

\begin{lemma}\label{normaltech} 
For the solution $\psi$ of \eqref{Stein-normal} we have that
\begin{align}
| \psi (x) | & \le  3 \|f'' \| +  2 (x^2+2)  \| f^{(3)}\|  , \label{bound1} \\
| x \psi '(x) | &\le 
6 x^2  \|f'' \| +  4 x^2 (x^2+1)  \| f^{(3)}\|   
 , \label{bound2} \\
 \label{bound3}
| \psi '' (x) | &\le 6(2 x^2 +1) \|f'' \| +  2(2 x^4+ 3 x^2 + 8) \| f^{(3)}\|   +  4 x^4 \| f^{(4)}\|. 
\end{align} 
\end{lemma}

Now, performing Taylor expansions as
in the first part of the proof, 
\begin{equation*} \EE g^{(3)}(S) = \mathbb{E}[\psi'(S)- S \psi(S)] = R_{4} + R_{5}, \end{equation*}
where
\beas
R_{4} &=&\frac{1}{\sqrt{n}}  \mathbb{E} X_1 \psi''\left( S^{(1)} + \theta_{4} \frac{X_1}{\sqrt{n}} \right) , \\
R_{5} &=& -\frac{1}{2 \sqrt{n}}  \mathbb{E} X_1^3 \psi''\left( S^{(1)} + \theta_{5} \frac{X_1}{\sqrt{n}} \right)
\enas
for some $\theta_{4}, \theta_{5} \in (0,1)$.  
Using \eqref{bound3} and $\EE |X| \le 1$ we obtain that 
\begin{align*}
| R_4| &\le \frac{1}{\sqrt{n}}  \mathbb{E} |X_1  \psi''( \xi_{\theta_{4}} ) | \\
&\leq \frac{1}{\sqrt{n}}   \Big\{ 6 \| f''\| \mathbb{E} |X_1  ( 2 \xi_{\theta_{4}}^2 + 1)|  + 2  \| f^{(3)}\| \mathbb{E} |X_1  ( 2 \xi_{\theta_{4}}^4 + 3 \xi_{\theta_{4}}^2  + 8)|  \\
&\quad+ 4  \| f^{(4)}\| \mathbb{E} |X_1  \xi_{\theta_{4}}^4| \Big\} \\
&\le   \frac{1}{\sqrt{n}}   \Big\{ 6 \| f''\|+ 16   \| f^{(3)}\|    + \mathbb{E} |X_1   \xi_{\theta_{4}}^2| ( 12 \| f''\|  +6  \| f^{(3)}\|) \\
&\quad+ 4 \mathbb{E} |X_1  \xi_{\theta_{4}}^4 | ( \| f^{(3)}\|  +   \| f^{(4)}\|)  \Big\} .
\end{align*}
The last expression we bound with \eqref{oneofthem} to obtain 
\begin{align*}
| R_4| &\le  \frac{1}{\sqrt{n}}   \bigg\{ 6 \| f''\|+ 16   \| f^{(3)}\|    + \left( 1 + \frac{\EE |X|^3}{n} \right)  ( 12 \| f''\|  +6  \| f^{(3)}\|)    \\
&\quad+ 32  \bigg( 3 + \frac{\EE X^4}{n} + \frac{\EE |X|^5}{n^2} \bigg) ( \| f^{(3)}\|  +   \| f^{(4)}\|) 
 \bigg\} 
\\
&=  \frac{1}{\sqrt{n}}   \bigg\{  6\bigg[ 3 +    \frac{2 \EE |X|^3}{n}  \bigg] \| f''\|+    
2 \bigg[ 59 + \frac{3\EE |X|^3}{n} + \frac{16\EE X^4}{n}  \\
&\quad+ \frac{16\EE |X|^5}{n^2}  \bigg]  \| f^{(3)}\|        + 32  \bigg[ 3 + \frac{\EE X^4}{n} + \frac{\EE |X|^5}{n^2}  \bigg]  \| f^{(4)}\| 
 \bigg\}  
. 
\end{align*} 
Similarly, 
\begin{align*}
| R_5| &\le  \frac{1}{2\sqrt{n}}     \Big\{ 6 \| f''\| \mathbb{E} |X_1^3 ( 2 \xi_{\theta_{4}}^2 + 1)|  + 2  \| f^{(3)}\| \mathbb{E} |X_1^3 ( 2 \xi_{\theta_{4}}^4 + 3 \xi_{\theta_{4}}^2  + 8)|  \\
&\quad+ 4  \| f^{(4)}\| \mathbb{E} |X_1 ^3 \xi_{\theta_{4}}^4| \Big\}\\
&\le  \frac{1}{\sqrt{n}}   \bigg\{  3\bigg[ 3\mathbb{E}|X|^3 +    \frac{2 \EE |X|^5}{n}  \bigg] \| f''\|\!+\!    \bigg[ 59\mathbb{E}|X|^3 + \frac{3\EE |X|^5}{n} + \frac{16\mathbb{E}|X|^3\EE X^4}{n}  \\
&\quad+ \frac{16\EE |X|^7}{n^2}  \bigg]  \| f^{(3)}\|        + 16 \bigg[ 3\mathbb{E}|X|^3 + \frac{\mathbb{E}|X|^3\EE X^4}{n} + \frac{\EE |X|^7}{n^2}  \bigg]  \| f^{(4)}\| 
 \bigg\}    
. 
\end{align*} 
To conclude, we have shown that
\begin{equation*} |\mathbb{E} h(W) - \chi^2_{(1)}h| \leq |R_1|+|R_2|+|R_3|+\frac{3|\mathbb{E}X^3|}{2\sqrt{n}}(|R_{4}|+|R_{5}|),
 \end{equation*}
and summing up these remainder terms gives
\begin{equation*} 
 |\mathbb{E} h(W) - \chi^2_{(1)}h| \leq
\frac{1}{n} \{ \beta_1 \|f''\| + \beta_2 \|f^{(3)}\| + \beta_3 \|f^{(4)}\| \},  
\end{equation*}
where 
\begin{align*}
\beta_1 &=  
\frac{3}{2}+\frac{\mathbb{E}X^4}{2}+|\mathbb{E}X^3|\bigg[\frac{57}{2}+\bigg(\frac{27}{2}+\frac{18}{n}\bigg)\mathbb{E}|X|^3+\frac{9\mathbb{E}|X|^5}{n}\bigg] 
,\\
\beta_2 &=
12+4\bigg(1+\frac{3}{n}\bigg)\mathbb{E}X^4+\frac{4\mathbb{E}X^6}{n}+|\mathbb{E}X^3|\bigg[189+\bigg(\frac{177}{2}+\frac{21}{n}\bigg)\mathbb{E}|X|^3\\
&\quad+\frac{48\mathbb{E}X^4}{n}+\frac{24\mathbb{E}|X|^3\mathbb{E}X^4}{n}+\frac{1}{n}\bigg(\frac{9}{2}+\frac{48}{n}\bigg)\mathbb{E}|X|^5+\frac{24\mathbb{E}|X|^7}{n^2}\bigg] 
 , \\
\beta_3 &=
48+16\bigg(1+\frac{1}{n}\bigg)\mathbb{E}X^4+\frac{16(\mathbb{E}X^4)^2}{3n}+\frac{16\mathbb{E}X^6}{n^2}+\frac{16\mathbb{E}X^8}{3n^2}+|\mathbb{E}X^3|\bigg[192 \\
&\quad+72\mathbb{E}|X|^3+\frac{64\mathbb{E}X^4}{n}+\frac{24\mathbb{E}|X|^3\mathbb{E}X^4}{n}+\frac{64\mathbb{E}|X|^5}{n^2}+\frac{24\mathbb{E}|X|^7}{n^2}\bigg] 
.
\end{align*}
The $\beta_i$ can be simplified using  that $n\geq 1$ and $1\leq\mathbb{E}|X|^a\leq\mathbb{E}|X|^b$ for $2\leq a\leq b$, as well as the inequalities $\mathbb{E}|X|^3(\mathbb{E}X^4)^{3/4}\leq \mathbb{E}|X|^3\mathbb{E}X^4\leq\mathbb{E}|X|^7$ and $(\mathbb{E}X^4)^2\leq\mathbb{E}X^8$, which follow from H\"{o}lder's inequality.  Carrying out this simplification gives that
\beas
\beta_1 &\le &  (2 + 69 |\mathbb{E}X^3|)  \mathbb{E}X^8 
,\\
\beta_2 &\le & ( 32 + 447 |\mathbb{E}X^3| ) \mathbb{E}X^8 
 , \\
\beta_3 &\leq& ( 107 + 
440 |\mathbb{E}X^3| ) \mathbb{E}X^8 
.
\enas 
Finally, using inequality (\ref{chibound}) to translate bounds on the derivatives of the solution $f$ to bounds on the derivatives of the test function $h$ completes the proof of Theorem \ref{chiiid}.  
\end{proof} 

\begin{remark}
Instead of using \eqref{rorder} to bound the derivatives of $f$ we could have used \eqref{xfr2}, which would have reduced the moment requirements on $X$ to sixth order and also weakened the conditions on the test function $h$, and would have resulted in the bound 
\beas 
 |\mathbb{E} h(W) - \chi^2_{(1)}h| \leq \frac{\mathbb{E}X^6}{n} \{\gamma_0\|h\|+ \gamma_1 \|h'\| + \gamma_2 \| h'' \| \} , 
\enas 
where 
\begin{equation*}\gamma_i=A_i+B_i|\mathbb{E}X^3|, \qquad i=0,1,2,
\end{equation*} 
for universal constants $A_i$ and $B_i$.  This approach does not easily adapt to give the right order in $d$ for $W_d$, whereas the previous approach does, as will be seen in Theorem \ref{rightresult}. 
\end{remark} 

Moving onto the case of $d > 1$,  the following generalisation of Theorem \ref{chiiid} is almost immediate:

\begin{theorem} \label{rightresult} 
Suppose the $X_{ij}$ are defined as before with bounded eighth moment, and let $W_d$ be defined as in equation (\ref{W_d_eqn}).
Then, for a function $h\in C_b^3(\mathbb{R}^+)$ and for any positive integer $d$, 
\begin{equation} | \mathbb{E} h(W_d) - \chi^2_{(d)}h| \leq \frac{4d\mathbb{E}X^8}{(d+2)n} \{\alpha_0\|h\|+ \alpha_1 \|h'\| + \alpha_2 \|h''\| + \alpha_3 \|h^{(3)}\| \},
\label{thm_result}
\end{equation}
where the $\alpha_i$ are as in the statement of Theorem \ref{chiiid}.
 \end{theorem}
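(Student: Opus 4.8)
The plan is to run the argument of Theorem~\ref{chiiid} in all $d$ coordinates simultaneously, exploiting independence across columns. Write $S_j=\frac1{\sqrt n}\sum_{i=1}^nX_{ij}$ and $\mathbf S=(S_1,\dots,S_d)$, so that $W_d=\|\mathbf S\|^2=\sum_{j=1}^dS_j^2$ with $S_1,\dots,S_d$ independent. Let $f$ solve the $\chi^2_{(d)}$ Stein equation \eqref{chieqn} (that is, \eqref{delight} with $r=d/2$, $\lambda=1/2$), and set $g(\mathbf s)=\tfrac14 f(\|\mathbf s\|^2)$. A direct computation gives $\partial_j^2g(\mathbf s)=\tfrac12f'(\|\mathbf s\|^2)+s_j^2f''(\|\mathbf s\|^2)$ and $s_j\partial_jg(\mathbf s)=\tfrac12 s_j^2f'(\|\mathbf s\|^2)$, so summing over $j$,
\[ \sum_{j=1}^d\bigl(\partial_j^2g(\mathbf s)-s_j\partial_jg(\mathbf s)\bigr)=\|\mathbf s\|^2f''(\|\mathbf s\|^2)+\tfrac12(d-\|\mathbf s\|^2)f'(\|\mathbf s\|^2)=\mathcal{A}_df(\|\mathbf s\|^2). \]
Hence $\mathbb E[\mathcal{A}_df(W_d)]=\sum_{j=1}^d\mathbb E[\partial_j^2g(\mathbf S)-S_j\partial_jg(\mathbf S)]$, reducing the problem to bounding the $j$-th summand.

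For fixed $j$ I would bound $|\mathbb E[\partial_j^2g(\mathbf S)-S_j\partial_jg(\mathbf S)]|$ by repeating Parts~I and~II of the proof of Theorem~\ref{chiiid} verbatim, with only cosmetic changes: the one-dimensional derivative $g^{(k)}(S)$ becomes $\partial_j^kg(\mathbf S)$; $S^{(1)}$ becomes $S_j^{(1)}=S_j-\tfrac1{\sqrt n}X_{1j}$, which is independent of $X_{1j}$ and of all other columns; and $f^{(k)}(S^2)$ becomes $f^{(k)}(\|\mathbf S\|^2)$ throughout. What makes this work is that $\partial_j^3g(\mathbf s)=3s_jf''(\|\mathbf s\|^2)+2s_j^3f^{(3)}(\|\mathbf s\|^2)$ and $\partial_j^4g(\mathbf s)=3f''(\|\mathbf s\|^2)+12s_j^2f^{(3)}(\|\mathbf s\|^2)+4s_j^4f^{(4)}(\|\mathbf s\|^2)$ have exactly the same shape as $g^{(3)},g^{(4)}$ in the $d=1$ case, with all polynomial prefactors depending on $s_j$ alone; since $f^{(k)}$ is always bounded by $\|f^{(k)}\|$ and the relevant facts $\mathbb E(S_j^{(1)})^2<1$, $\mathbb E(S_j^{(1)})^4<3+\mathbb EX^4/n$ and inequality \eqref{oneofthem} all concern column $j$ only, every bound in Parts~I and~II carries over with identical constants. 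The symmetry step is applied conditionally on $(S_l)_{l\neq j}$: with $c=\sum_{l\neq j}S_l^2$ fixed, $s\mapsto 3sf''(s^2+c)+2s^3f^{(3)}(s^2+c)$ is odd, so it has zero mean under $N(0,1)$, Lemma~\ref{normaltech} applies to the corresponding normal Stein equation (its bounds involve $f$ only through $\|f''\|,\|f^{(3)}\|,\|f^{(4)}\|$, which are unaffected by the shift $s^2\mapsto s^2+c$), and the conditional remainders are $O(n^{-1/2})$; the outer expectation over $(S_l)_{l\neq j}$ preserves this. We obtain $|\mathbb E[\partial_j^2g(\mathbf S)-S_j\partial_jg(\mathbf S)]|\le\tfrac1n\{\beta_1\|f''\|+\beta_2\|f^{(3)}\|+\beta_3\|f^{(4)}\|\}$ with exactly the $\beta_i$ from the proof of Theorem~\ref{chiiid}.

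Summing over $j=1,\dots,d$ gives $|\mathbb E[\mathcal{A}_df(W_d)]|\le\tfrac dn\{\beta_1\|f''\|+\beta_2\|f^{(3)}\|+\beta_3\|f^{(4)}\|\}$, and I would then invoke \eqref{chibound} with $p=d$, i.e.\ $\|f^{(k)}\|\le\tfrac4{d+2}(3\|h^{(k-1)}\|+\|h^{(k-2)}\|)$ for $k=2,3,4$, exactly as in the last line of the proof of Theorem~\ref{chiiid}. Collecting terms turns the bound into $\tfrac{4d}{(d+2)n}\bigl(\beta_1\|h\|+(3\beta_1+\beta_2)\|h'\|+(3\beta_2+\beta_3)\|h''\|+3\beta_3\|h^{(3)}\|\bigr)$, and the same simplifications of the $\beta_i$ used in the $d=1$ proof ($n\geq1$, $1\le\mathbb E|X|^a\le\mathbb E|X|^b$ for $2\le a\le b$, and Hölder) show $\beta_1\le\alpha_0\mathbb EX^8$, $3\beta_1+\beta_2\le\alpha_1\mathbb EX^8$, $3\beta_2+\beta_3\le\alpha_2\mathbb EX^8$ and $3\beta_3\le\alpha_3\mathbb EX^8$ with the $\alpha_i$ of Theorem~\ref{chiiid}; this yields \eqref{thm_result}, with no new arithmetic needed.

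The main obstacle is a bookkeeping one: verifying that the per-column argument really is the $d=1$ proof with nothing lost, in particular that the symmetry step survives the conditioning on the other $d-1$ columns. The one substantive point there is that Lemma~\ref{normaltech} applies to the shifted test function $3sf''(s^2+c)+2s^3f^{(3)}(s^2+c)$ and still gives the stated bounds; this holds because those bounds depend on $f$ only through the shift-invariant quantities $\|f''\|,\|f^{(3)}\|,\|f^{(4)}\|$. I would also remark in passing that the cruder route of writing $W_d$ as a sum of $d$ i.i.d.\ copies of $W_1$ and telescoping against $d$ independent $\chi^2_{(1)}$'s also yields the $O(n^{-1})$ rate, but with the worse prefactor $\tfrac{4d}{3n}$ in place of $\tfrac{4d}{(d+2)n}$; the improvement comes precisely from doing Stein's method directly for $\chi^2_{(d)}$ and using \eqref{chibound} with $p=d$ rather than $p=1$.
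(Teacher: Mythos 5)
Your proposal is correct and is essentially the paper's own argument: the coordinate-wise decomposition $W_{(j)}f''(W_d)+\tfrac12(1-W_{(j)})f'(W_d)$ (identical to your $\partial_j^2 g-s_j\partial_j g$), conditioning on the other $d-1$ coordinates, reusing the $d=1$ analysis with the shifted solution $f(\cdot+c)$ via shift-invariance of the sup norms, and only at the end applying \eqref{chibound} with $p=d$ to obtain the $\tfrac{4d}{(d+2)n}$ prefactor. Your explicit handling of the conditional symmetry step and your observation that one must keep the intermediate bound in terms of $\|f''\|,\|f^{(3)}\|,\|f^{(4)}\|$ (rather than invoking the finished bound \eqref{Katie}, which would only give $\tfrac{4d}{3n}$) spell out exactly what the paper's terser proof intends.
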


\begin{proof}Define $W_{(j)}=\frac{1}{n}\left(\sum_{i=1}^{n}X_{ij}\right)^2$, so that $W_d=\sum_{j=1}^dW_{(j)}$.  Using the $\chi^2_{(d)}$ Stein equation and conditioning gives
\begin{align*}\mathbb{E}h(W_d)-\chi^2_{(d)}h&=\mathbb{E}[W_df''(W_d)+\tfrac{1}{2}(d-W_d)f'(W_d)]  \\
&=\sum_{j=1}^d\mathbb{E}[W_{(j)}f''(W_d)+\tfrac{1}{2}(1-W_{(j)})f'(W_d)]  \\
&=\sum_{j=1}^d\mathbb{E}[\mathbb{E}[W_{(j)}f''(W_d)+\tfrac{1}{2}(1-W_{(j)})f'(W_d) \: | \\
&\quad\: W_{(1)},\ldots,W_{(j-1)},W_{(j+1)},\ldots,W_{(d)})]]. 
\end{align*}
Since $\|g^{(n)}(x+c)\|=\|g^{(n)}(x)\|$ for any constant $c$,  bound (\ref{Katie}) from Theorem \ref{chiiid} can be used to bound the above expression, which yields (\ref{thm_result}).
\end{proof}

\begin{remark}\label{Binomial_remark}
The premise that the test function must be smooth is vital.  Consider the following example in the univariate case with the single point test function $h \equiv \chi_{\{0\}}$.  Let $X_i$, $i = 1, \ldots, n = 2k$, be random variables taking values in the set $\{ -1,1 \}$ with equal probability.  Then $\mathbb{E} X_i = 0$, $\mathrm{Var} X_i = 1$ and
\begin{equation*}
\mathbb{E} h(W) = \mathbb{P}\Big( \sum\nolimits_i X_i = 0\Big) = \binom{2k}{k} \bigg( \frac{1}{2} \bigg)^{2k} \approx \frac{1}{\sqrt{\pi k}} = \sqrt{\frac{2}{\pi n}},
\end{equation*}
by Stirling's approximation.  Furthermore, $\chi^2_{(1)} h = \mathbb{P} (\chi^2_{(1)} = 0) = 0$, and hence the total variation distance between the distribution of $W$ and the $\chi^2_{(1)}$ distribution is of order $n^{-1/2}$. It is an open question whether a bound of order $n^{-1}$ in Wasserstein distance can be achieved.
\end{remark}

\begin{remark}The bound (\ref{thm_result})  allows for $d\rightarrow\infty$.  Of course for large $d$, the statistic $W_d$ will be approximately normally distributed because the chi-square distribution approaches the normal distribution as $d$ increases.
\end{remark}

\section{Application to Pearson's statistic}

Now we tackle the Pearson chi-square goodness-of-fit test, introduced in 1900 in  \cite{pearson}.

\begin{theorem}[Pearson's $\chi^2$ test] Consider $n$ independent trials, with each trial leading to a unique classification over $m$ classes.  Let the vector $\mathbf{p} = (p_1,\ldots,p_m)$ represent the non-zero classification probabilities, and let $(U_1,\ldots,U_m)$ represent the observed numbers arising in each class.  Then Pearson's chi-square statistic, given by
\begin{equation} W = \sum_{j=1}^m \frac{(U_j - n p_j)^2}{n p_j}, \end{equation}
is asymptotically $\chi_{(m-1)}^2$ distributed.
\end{theorem}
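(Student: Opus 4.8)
The plan is to reduce the statement to the classical fact that a quadratic form in a Gaussian vector with idempotent covariance is chi-square distributed, and to obtain the required Gaussian limit from the multivariate central limit theorem. First I would write $U_j=\sum_{k=1}^n\bone[\text{trial }k\text{ lands in class }j]$, so that the count vector $\mathbf U=(U_1,\dots,U_m)^{\top}$ is a sum of $n$ i.i.d.\ random vectors $\mathbf Y_1,\dots,\mathbf Y_n$, each equal to a standard basis vector $e_{J_k}$ with $\PP(\mathbf Y_k=e_j)=p_j$. These have mean $\mathbf p$ and covariance matrix $\Sigma_0:=\mathrm{diag}(\mathbf p)-\mathbf p\mathbf p^{\top}$. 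Setting $Z_{n,j}=(U_j-np_j)/\sqrt{np_j}$ we have $W=\sum_{j=1}^m Z_{n,j}^2=\|\mathbf Z_n\|^2$, and $\mathbf Z_n=D^{-1/2}n^{-1/2}(\mathbf U-n\mathbf p)$, where $D=\mathrm{diag}(\mathbf p)$ is invertible since all $p_j>0$.

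Next I would invoke the Lindeberg--L\'evy multivariate central limit theorem (its hypotheses hold trivially, as the $\mathbf Y_k$ are bounded i.i.d.): $n^{-1/2}(\mathbf U-n\mathbf p)\xrightarrow{d}N(0,\Sigma_0)$, hence by the continuous mapping theorem applied to the linear map $D^{-1/2}$ one gets $\mathbf Z_n\xrightarrow{d}\mathbf Z\sim N(0,\Sigma)$ with $\Sigma=D^{-1/2}\Sigma_0 D^{-1/2}=I-\sqrt{\mathbf p}\,\sqrt{\mathbf p}^{\top}$, where $\sqrt{\mathbf p}=(\sqrt{p_1},\dots,\sqrt{p_m})^{\top}$. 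A direct computation, using $\|\sqrt{\mathbf p}\|^2=\sum_j p_j=1$, shows that $\Sigma$ is symmetric and idempotent, i.e.\ it is the orthogonal projection onto the hyperplane orthogonal to $\sqrt{\mathbf p}$; in particular $\mathrm{rank}(\Sigma)=m-1$. Applying the continuous mapping theorem once more to $x\mapsto\|x\|^2$ gives $W=\|\mathbf Z_n\|^2\xrightarrow{d}\|\mathbf Z\|^2$.

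It remains to identify the law of $\|\mathbf Z\|^2$. Diagonalising $\Sigma=O\Lambda O^{\top}$ with $O$ orthogonal and $\Lambda$ diagonal having $m-1$ entries equal to $1$ and one entry equal to $0$, the vector $O^{\top}\mathbf Z\sim N(0,\Lambda)$ has $m-1$ independent standard normal coordinates and one coordinate almost surely zero, so $\|\mathbf Z\|^2=\|O^{\top}\mathbf Z\|^2$ is a sum of $m-1$ squared independent standard normals, i.e.\ $\|\mathbf Z\|^2\sim\chi^2_{(m-1)}$; hence $W\xrightarrow{d}\chi^2_{(m-1)}$. The only genuinely delicate point is the degeneracy of the limiting covariance: one must keep track of the rank drop forced by the constraint $\sum_j U_j=n$ (equivalently, $\sqrt{\mathbf p}^{\top}\mathbf Z_n$ being asymptotically negligible), which is precisely what converts $m$ coordinates into $m-1$ degrees of freedom. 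Everything else is routine. (A quantitative version with explicit rate is exactly what the Stein's-method machinery developed in the following sections provides.)
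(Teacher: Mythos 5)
Your proof is correct in every step, but it takes a genuinely different route from the paper, which in fact does not reprove this statement at all: the result you address is the classical Pearson (1900) theorem, which the paper cites as background and then sharpens. The paper's actual contribution in Section 4 is the pair of quantitative Theorems bounding $|\mathbb{E}h(W) - \chi^2_{(m-1)}h|$ explicitly (of orders $n^{-1}$ and $n^{-1/2}$ for smooth $h$), from which the qualitative limit follows as an immediate corollary. Your argument is the standard textbook one: write $W = \|\mathbf{Z}_n\|^2$ with $\mathbf{Z}_n = D^{-1/2}n^{-1/2}(\mathbf{U}-n\mathbf{p})$, invoke the multivariate CLT and the continuous mapping theorem, check that the limiting covariance $\Sigma = I - \sqrt{\mathbf{p}}\,\sqrt{\mathbf{p}}^{\top}$ is the rank-$(m-1)$ orthogonal projection, and diagonalize. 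This is sound and far simpler, but delivers no rate of convergence, which is precisely what the paper's Stein-method machinery buys at the cost of heavy bookkeeping. One small imprecision worth flagging: you describe $\sqrt{\mathbf{p}}^{\top}\mathbf{Z}_n$ as ``asymptotically negligible,'' but it is identically zero, since $\sqrt{\mathbf{p}}^{\top}\mathbf{Z}_n = n^{-1/2}\sum_{j}(U_j - np_j) = 0$ because the counts sum to $n$; the paper relies on this exact identity $\sum_j \sqrt{p_j}S_j = 0$. Indeed both approaches hinge on precisely this constraint: in yours it forces the eigenvalue drop from $m$ to $m-1$, while in the paper it is the hypothesis of the operator-comparison lemma that converts the $\chi^2_{(m-1)}$ Stein operator $\mathcal{A}_{m-1}$ into the degenerate $\mathrm{MVN}(\mathbf{0},\Sigma_{\mathbf{S}})$ Stein operator, which is what lets the authors handle the dependent multinomial counts through a multivariate-normal Stein equation.
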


The aim of this section is to ascertain how fast $W$ converges (in the sense of weak convergence) in terms of $n$, $m$ and $p_1,\ldots,p_m$.  To date, the application of Stein's method to this problem has been fairly limited.  To the best of our knowledge, the only work  thus far on this topic are the unpublished papers \cite{mann} and \cite{mann2}.  The first of these papers \cite{mann} uses an exchangeable pair coupling to study the asymptotic properties of the statistic with a uniform null distribution using a smooth test function, and in it is derived a bound of order $n^{-1/2}$, but with an unmanageably large constant.  In  the second paper \cite{mann2}, a bound (Theorem 1.3) of  \cite{gotze} for the multidimensional central limit theorem is used to derived a bound, in the Kolmogorov distance, on Pearson's statistic with general null distribution:
\begin{equation}\label{csdbcbhj}\sup_{z>0}|\mathbb{P}(W\leq z)-\mathbb{P}(Y_{m-1}\leq z)|\leq\frac{250m}{p_*^{3/2}\sqrt{n}},
\end{equation}
where $Y_{m-1}\sim \chi_{(m-1)}^2$ and $p_*=\min_{1\leq i\leq m}p_i$.  The dependence on $m$ in this bound can actually be improved by using Theorem 1.1 of \cite{bentkus} in place of Theorem 1.3 of \cite{gotze}, which yields the upper bound $400m^{1/4}p_*^{-3/2}n^{-1/2}$.

An explicit $O(n^{-1/2})$ bound for the distance between Pearson's statistic and its limiting chi-square distribution is given by (\ref{csdbcbhj}).  However, this rate is not optimal.   \cite{yarnold} used Edgeworth expansions to show that
\begin{equation*}\sup_{z>0}|\mathbb{P}(W\leq z)-\mathbb{P}(Y_{m-1}\leq z)|=O(n^{-(m-1)/m}), \quad m\geq 2,
\end{equation*}
and this was improved, for $m\geq 6$, by  \cite{gu03}:
\begin{equation*}\sup_{z>0}|\mathbb{P}(W\leq z)-\mathbb{P}(Y_{m-1}\leq z)|=O(n^{-1}), \quad m\geq 6.
\end{equation*}
However, \cite{yarnold} and \cite{gu03} do not give explicit upper bounds.  It should also be noted that  \cite{ulyanov} and \cite{asylbekov} have used Edgeworth expansions to study the rate of convergence of the more general power divergence family of statistics (see \cite{cr84}) constructed from the multinomial distribution of degree $m$ (which includes Pearson's statistic, the log-likelihood ratio statistic and Freeman-Tukey statistics as special cases) to their $\chi_{(m-1)}^2$ limits.

In this section, we obtain explicit bounds for the distributional distance between Pearson's statistic and its limiting chi-square distribution.  We now present our bounds, starting with one of the main results of this paper: a Pearson chi-square weak convergence theorem
for smooth test functions with a bound of order $n^{-1}$, which holds for all $m\geq2$.  

\begin{theorem}\label{Pearson_thm_non_int}
Let $(U_1, \ldots, U_m)$ represent the multinomial vector of $n\geq 2$ observed counts, where $m \geq 2$, and suppose that $np_j\geq 1$ for all $j=1,\ldots,m$.  Denote the Pearson statistic  by $W$. Let $h \in C_b^5(\mathbb{R}^+)$. 
 Then
\begin{align}\label{pearbound}
|\mathbb{E} h(W) - \chi^2_{(m-1)} h | &\leq \frac{4}{(m+1)n}\bigg(\sum_{j=1}^m\frac{1}{\sqrt{p_j}}\bigg)^2\{19\|h\|+366\|h'\|+2016\|h''\|\nonumber\\
&\quad+5264\|h^{(3)}\|+106965\|h^{(4)}\|+302922\|h^{(5)}\|\}.
\end{align}
\end{theorem}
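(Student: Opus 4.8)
The plan is to follow the two-part structure of the proof of Theorem~\ref{chiiid}, replacing the scalar device $g(s)=\tfrac14 f(s^2)$ by its radial multivariate analogue. Write $U_j-np_j=\sum_{i=1}^n X_{ij}$ with $X_{ij}=\mathbf{1}[\text{trial }i\in\text{class }j]-p_j$, so that the rows $(X_{i1},\dots,X_{im})$, $i=1,\dots,n$, are i.i.d.\ with $\mathbb{E}X_{ij}=0$, $\mathbb{E}[X_{ij}X_{ik}]=p_j\delta_{jk}-p_jp_k$ and $\sum_j X_{ij}=0$. Put $S_j=(np_j)^{-1/2}\sum_i X_{ij}$ and $\mathbf{S}=(S_1,\dots,S_m)$; then $W=\|\mathbf{S}\|^2$, while $\sum_j X_{ij}=0$ forces $v\cdot\mathbf{S}=0$ \emph{identically}, where $v=(\sqrt{p_1},\dots,\sqrt{p_m})$ is a unit vector. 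Let $f$ solve the $\chi^2_{(m-1)}$ Stein equation \eqref{chieqn} for the test function $h$, and set $g(\mathbf{x})=\tfrac14 f(\|\mathbf{x}\|^2)$ on $\mathbb{R}^m$. A short computation gives $\Delta g(\mathbf{x})-\mathbf{x}\cdot\nabla g(\mathbf{x})=\|\mathbf{x}\|^2 f''(\|\mathbf{x}\|^2)+\tfrac12(m-\|\mathbf{x}\|^2)f'(\|\mathbf{x}\|^2)$ and $(v\cdot\nabla)^2 g(\mathbf{x})=\tfrac12 f'(\|\mathbf{x}\|^2)+(v\cdot\mathbf{x})^2 f''(\|\mathbf{x}\|^2)$, so that, using $v\cdot\mathbf{S}=0$,
$$
\mathbb{E} h(W)-\chi^2_{(m-1)}h=\mathbb{E}[\mathcal{A}_{m-1}f(W)]=\mathbb{E}\bigl[\Delta_{v^{\perp}}g(\mathbf{S})-\mathbf{S}\cdot\nabla g(\mathbf{S})\bigr],
$$
where $\Delta_{v^{\perp}}=\Delta-(v\cdot\nabla)^2$ is the Laplacian on the hyperplane $v^{\perp}\cong\mathbb{R}^{m-1}$. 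The right-hand side is the Ornstein--Uhlenbeck (standard normal) Stein expression in $m-1$ dimensions, so the problem is reduced to multivariate normal approximation, exactly as \eqref{Mercury} reduced the scalar problem.

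For Part~I I would introduce the leave-one-out vector $\mathbf{S}^{(1)}$ formed from rows $2,\dots,n$, which is independent of row~$1$, still satisfies $v\cdot\mathbf{S}^{(1)}=0$, and obeys $\mathbf{S}=\mathbf{S}^{(1)}+n^{-1/2}\mathbf{Y}_1$ with $\mathbf{Y}_1=(X_{11}/\sqrt{p_1},\dots,X_{1m}/\sqrt{p_m})\perp v$. Since the rows are identically distributed, $\mathbb{E}[\mathbf{S}\cdot\nabla g(\mathbf{S})]=\sqrt n\sum_j p_j^{-1/2}\mathbb{E}[X_{1j}\partial_j g(\mathbf{S})]$; Taylor expanding $\partial_j g(\mathbf{S})$ about $\mathbf{S}^{(1)}$, summing over $j$ and using $X_{1j}p_j^{-1/2}=Y_{1j}$, the $r$-th order term collapses to $\tfrac{1}{r!}n^{-(r-1)/2}\mathbb{E}\bigl[(\mathbf{Y}_1\cdot\nabla)^{r+1}g(\mathbf{S}^{(1)})\bigr]$ (joint expectation over the independent $\mathbf{Y}_1$ and $\mathbf{S}^{(1)}$). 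The $r=0$ term vanishes ($\mathbb{E}\mathbf{Y}_1=0$); the $r=1$ term equals $\mathbb{E}[\Delta_{v^{\perp}}g(\mathbf{S}^{(1)})]$ because $\mathbb{E}_{\mathbf{Y}_1}[(\mathbf{Y}_1\cdot\mathbf{x})^2]=\|\mathbf{x}\|^2-(v\cdot\mathbf{x})^2$ and $\mathbb{E}\|\mathbf{Y}_1\|^2=m-1$; the $r=2$ term is
$$
N:=\frac{1}{2\sqrt n}\,\mathbb{E}\bigl[(\mathbf{Y}_1\cdot\nabla)^3 g(\mathbf{S}^{(1)})\bigr]=\frac{1}{2\sqrt n}\,\mathbb{E}\bigl[3\|\mathbf{Y}_1\|^2(\mathbf{Y}_1\cdot\mathbf{S}^{(1)})f''(W^{(1)})+2(\mathbf{Y}_1\cdot\mathbf{S}^{(1)})^3 f'''(W^{(1)})\bigr],
$$
a priori only $O(n^{-1/2})$; and the higher-order terms, together with a companion expansion of $\mathbb{E}[\Delta_{v^\perp}g(\mathbf{S})]-\mathbb{E}[\Delta_{v^\perp}g(\mathbf{S}^{(1)})]$, contribute remainders of order $n^{-1}$. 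Each remainder is a sum of the form $n^{-1}\sum_{j_1,\dots,j_r}\tfrac{|\mathbb{E}[X_{1j_1}\cdots X_{1j_r}]|}{\sqrt{p_{j_1}\cdots p_{j_r}}}\,\mathbb{E}[\,|\partial_{j_1\cdots j_r}g(\xi)|\,]$ (with $\xi$ on the segment from $\mathbf{S}^{(1)}$ to $\mathbf{S}$); expanding the index sum via the (explicit) mixed moments of the multinomial increments reduces it to quantities such as $\sum_j p_j^{-1}\,\mathbb{E}[(S_j^{(1)})^{2a}]$, all of which are $O\bigl((\sum_j p_j^{-1/2})^2\bigr)$ thanks to $\sum_j p_j=1$ and $np_j\ge1$; one then invokes \eqref{chibound} for $\|f^{(k)}\|$, $k\le 6$, which supplies the prefactor $\tfrac{4}{m+1}$.

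Part~II reduces $N$ from $O(n^{-1/2})$ to $O(n^{-1})$ by a symmetry argument. For each fixed value of $\mathbf{Y}_1$, the integrand $\mathbf{s}\mapsto 3\|\mathbf{Y}_1\|^2(\mathbf{Y}_1\cdot\mathbf{s})f''(\|\mathbf{s}\|^2)+2(\mathbf{Y}_1\cdot\mathbf{s})^3 f'''(\|\mathbf{s}\|^2)$ is an \emph{odd} function on $v^{\perp}$, hence has zero mean under the symmetric Gaussian $N(0,I_{v^{\perp}})$; since $\mathbf{S}^{(1)}$ is approximately this Gaussian, $\mathbb{E}_{\mathbf{S}^{(1)}}[\,\cdot\,]$ is itself $O(n^{-1/2})$. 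To make this precise I would apply Stein's method for the $(m-1)$-dimensional normal a \emph{second} time: solve the normal Stein equation with the above zero-mean odd test function (carrying $\mathbf{Y}_1$ as a parameter) and Taylor expand once more, leaving out row~$2$. This requires a multivariate analogue of Lemma~\ref{normaltech} bounding that Stein solution and its low-order derivatives in terms of $\|f^{(k)}\|$, $\|\mathbf{Y}_1\|$ and powers of $\|\mathbf{s}\|$ --- the $\|\mathbf{s}\|$-dependence is essential since the derivatives of $g$ are not bounded --- which I would state and prove in Section~\ref{appendix}. Carrying the resulting bounds through (again collapsing index sums and $\mathbf{Y}_1$-, $\mathbf{S}^{(1,2)}$-moments to $(\sum_j p_j^{-1/2})^2$) yields $N=O(n^{-1})$.

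Finally I would collect all the estimates. Each is of the shape $\tfrac{1}{n}(\text{class-weighted sum})(\text{moment})\|f^{(k)}\|$; bounding $\|f^{(k)}\|$ via \eqref{chibound} produces the universal factor $\tfrac{4}{m+1}$, which cancels one power of $m$, and the remaining powers of $m$ are absorbed into $\bigl(\sum_j p_j^{-1/2}\bigr)^2$ using $\sum_j\sqrt{p_j}\le\sqrt m$ and the power-mean bound $\bigl(\sum_j p_j^{-1/2}\bigr)^2\ge m^3$ (valid since $\sum_j p_j=1$). Summing the contributions and optimising the numerical constants gives \eqref{pearbound}. I expect the main obstacle to be precisely the Part~I/assembly bookkeeping: controlling the many multivariate Taylor remainders with their $p_j^{-1/2}$-weighted index sums and $\mathbf{S}^{(1)}$-moments, and verifying that after applying \eqref{chibound} the whole $(n,m,\mathbf{p})$-dependence collapses to $\tfrac{4}{(m+1)n}\bigl(\sum_j p_j^{-1/2}\bigr)^2$; a close second is establishing, for Part~II, the $\|\mathbf{s}\|$-explicit bounds on the auxiliary Stein solution needed to carry moment control through the second normal-approximation step.
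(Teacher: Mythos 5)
Your proposal matches the paper's proof in all essentials: the radial substitution $g(\mathbf{s})=\tfrac14 f(\|\mathbf{s}\|^2)$ converting the $\chi^2_{(m-1)}$ Stein operator into the degenerate multivariate-normal Stein operator with covariance $\Sigma_{\mathbf{S}}=I-vv^T$ on the hyperplane $v^\perp$ (the paper's Lemma~\ref{operatorcomp}, which is exactly your $\Delta_{v^\perp}$ identity), the leave-one-out Taylor expansion with the explicit multinomial moments, and the second application of Stein's method to the odd third-order term to upgrade its $O(n^{-1/2})$ estimate to $O(n^{-1})$ via a multivariate analogue of Lemma~\ref{normaltech} (this is the paper's Lemma~\ref{4psi}). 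The only cosmetic difference is bookkeeping: you centre the excised increment into $\mathbf{Y}_1$ and package the index sums as directional-derivative powers $(\mathbf{Y}_1\cdot\nabla)^{r+1}$, whereas the paper works with the uncentered $S_j^{(1)}=S_j-I_j(1)/\sqrt{np_j}$ and the raw indicator products $\tilde I_j(1)I_k(1)\cdots$ term by term.
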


When the constants are large compared to $n$ then the next result may give the smaller numerical bound. 

\begin{theorem}\label{pearsqrtn}Suppose $np_j\geq 1$ for all $j=1,\ldots,m$.  Then, for $h \in C_b^2(\mathbb{R}^+)$,
\begin{equation}\label{pearbound0}
|\mathbb{E} h(W) - \chi^2_{(m-1)} h | \leq \frac{12}{(m+1)\sqrt{n}}\{6\|h\|+46\|h'\|+84\|h''\|\}\sum_{j=1}^m\frac{1}{\sqrt{p_j}}.
\end{equation}
\end{theorem}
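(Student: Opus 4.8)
The plan is to run, in a vector-valued form, the same reduction used for Theorem~\ref{chiiid}, so that the dependence between the cell counts is built in from the outset. Write $U_j=\sum_{i=1}^n\mathbf{1}[Y_i=j]$ with $Y_1,\dots,Y_n$ i.i.d$.$ and $\mathbb{P}(Y_i=j)=p_j$, and set $\xi^{(i)}_j=(\mathbf{1}[Y_i=j]-p_j)/\sqrt{np_j}$; then $\xi^{(1)},\dots,\xi^{(n)}$ are i.i.d$.$ random vectors in $\mathbb{R}^m$ with $\mathbb{E}\xi^{(i)}=0$ and $n\,\mathrm{Cov}(\xi^{(i)})=\Sigma:=I-\sqrt p\,\sqrt p^{\,\top}$, where $\sqrt p=(\sqrt{p_1},\dots,\sqrt{p_m})^{\top}$. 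With $Z=\sum_i\xi^{(i)}$, $Z^{(i)}=Z-\xi^{(i)}$ (independent of $\xi^{(i)}$) and $|z|^2:=\sum_j z_j^2$, one has $W=|Z|^2$. The structural point is that every $\xi^{(i)}$, and hence $Z$, $Z^{(i)}$ and every intermediate point appearing below, lies on the hyperplane $H=\{z:\langle\sqrt p,z\rangle=0\}$, because each trial falls in exactly one class. Taking $G(z)=f(|z|^2)$ for the solution $f$ of the $\chi^2_{(m-1)}$ Stein equation \eqref{chieqn} and differentiating $G$ shows that on $H$
\[
\tfrac14\Big(\sum_{j,k}\Sigma_{jk}\,\partial_j\partial_k G(z)-\sum_j z_j\,\partial_j G(z)\Big)=\mathcal{A}_{m-1}f(|z|^2),
\]
so that $\mathbb{E}h(W)-\chi^2_{(m-1)}h=\mathbb{E}[\mathcal{A}_{m-1}f(W)]=\tfrac14\,\mathbb{E}\big[\sum_{j,k}\Sigma_{jk}\partial_j\partial_k G(Z)-\sum_j Z_j\partial_j G(Z)\big]$ is a multivariate normal Stein expression with the (degenerate, rank $m-1$) covariance $\Sigma$.

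I would then perform a single leave-one-out Taylor expansion, exactly as in the proof of Theorem~\ref{chiiid}. Writing $\sum_j Z_j\partial_j G(Z)=\sum_i\sum_j\xi^{(i)}_j\partial_j G(Z)$ and expanding $\partial_j G(Z)$ about $Z^{(i)}$ to second order, the zeroth-order term dies by $\mathbb{E}\xi^{(i)}=0$ and independence, the first-order term equals $\frac1n\Sigma_{jk}\sum_i\mathbb{E}[\partial_j\partial_k G(Z^{(i)})]$ after using $\mathbb{E}[\xi^{(i)}_j\xi^{(i)}_k]=\frac1n\Sigma_{jk}$, and one further first-order expansion replaces $Z^{(i)}$ by $Z$ there. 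Cancelling against $\sum_{j,k}\Sigma_{jk}\mathbb{E}[\partial_j\partial_k G(Z)]$ leaves $\mathbb{E}h(W)-\chi^2_{(m-1)}h=R_A+R_B$, with $R_A$ collecting the second-order Taylor remainders (sums of $\mathbb{E}[\xi^{(i)}_j\xi^{(i)}_k\xi^{(i)}_\ell\,\partial_j\partial_k\partial_\ell G(\cdot)]$) and $R_B$ the Hessian corrections (sums of $\frac1n\Sigma_{jk}\mathbb{E}[\xi^{(i)}_\ell\,\partial_j\partial_k\partial_\ell G(\cdot)]$). Since the evaluation points stay on $H$, the third derivatives of $G$ contract into combinations of $f''(|z|^2)$, $|z|^2f^{(3)}(|z|^2)$ and $f^{(3)}(|z|^2)$ times inner products $\langle\xi^{(i)},\cdot\rangle$, $\langle\xi^{(i)},\cdot\rangle^3$ or $|\xi^{(i)}|^2\langle\xi^{(i)},\cdot\rangle$; in $R_B$ the $f''$-contraction against $\Sigma_{jk}$ additionally carries a factor $m+1$ (from $\sum_j\Sigma_{jj}=m-1$ and two further terms).

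It then remains to bound $|R_A|$ and $|R_B|$. For the analytic factors I would invoke \eqref{chibound}, giving $\|f''\|\le\frac{4}{m+1}(3\|h'\|+\|h\|)$ and $\|f^{(3)}\|\le\frac{4}{m+1}(3\|h''\|+\|h'\|)$, together with \eqref{xfr1} and \eqref{xfr2} from Lemma~\ref{xfr0}, giving $\|xf''(x)\|\le4\|h\|$ and $\|xf^{(3)}(x)\|\le4\|h'\|$; only $\|h\|,\|h'\|,\|h''\|$ enter (hence $h\in C_b^2(\mathbb{R}^+)$ suffices), and the $\tfrac{1}{m+1}$ here cancels the $m+1$ arising in $R_B$. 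For the probabilistic factors one uses that $\mathbf{1}[Y_i=j]-p_j$ is bounded, so $\mathbb{E}|\xi^{(i)}_j|^a\le c_a\,p_j^{1-a/2}n^{-a/2}$ for $a\ge1$, while $\mathrm{Var}(Z^{(i)}_j)\le1$ and, because $np_j\ge1$, $\mathbb{E}(Z^{(i)}_j)^4=O(1)$; the mixed quantities $\mathbb{E}|\langle\xi^{(i)},Z^{(i)}\rangle|$, $\mathbb{E}[\,|\xi^{(i)}|^2|\langle\xi^{(i)},Z^{(i)}\rangle|\,]$ and $\mathbb{E}|\xi^{(i)}|^4$ should be estimated componentwise (using $\mathbb{E}[\langle\xi^{(i)},Z^{(i)}\rangle^2\mid Z^{(i)}]=\tfrac1n|Z^{(i)}|^2$ on $H$) rather than by a lossy Cauchy--Schwarz. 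Summing the $n$ summands indexed by $i$ converts the $n^{-3/2}$ size of each into an overall $n^{-1/2}$, and the surplus powers of $m$ and the occurrences of $\sum_j p_j^{-1}$ are absorbed via the elementary inequalities $\big(\sum_j\sqrt{p_j}\big)\big(\sum_j p_j^{-1/2}\big)\ge m^2$ (so $\sum_j p_j^{-1/2}\ge m^{3/2}$) and $\sum_j p_j^{-1}\le p_*^{-1/2}\sum_j p_j^{-1/2}\le\sqrt n\,\sum_j p_j^{-1/2}$ (the last by $np_*\ge1$); collecting constants yields \eqref{pearbound0}.

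The conceptual crux is the hyperplane identity in the first paragraph: it is what makes $\mathcal{A}_{m-1}f(W)$ the generator of the Ornstein--Uhlenbeck semigroup with the degenerate covariance $\Sigma$, so that the expansion of $\mathbb{E}[\sum_j Z_j\partial_j G(Z)]$ matches $\mathbb{E}[\sum_{j,k}\Sigma_{jk}\partial_j\partial_k G(Z)]$ up to third-order remainders; without it one is stuck with an uncontrolled $\mathbb{E}f'(W)$ term. I expect the main obstacle to be the second part: estimating the many mixed moments of $\xi^{(i)}$ and $Z^{(i)}$ occurring in $R_A$ and $R_B$ carefully enough---componentwise, and using $np_j\ge1$ together with the elementary inequalities above rather than crude bounds---that the final estimate really is of order $\frac{1}{(m+1)\sqrt n}\sum_j p_j^{-1/2}$ and in particular vanishes exactly when $np_*\to\infty$.
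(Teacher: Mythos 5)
Your proposal is correct in strategy, but it takes a genuinely different route from the paper's proof in the way it controls the Taylor remainders, and this is worth spelling out.

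Where you and the paper coincide: both start by writing $W=|\bS|^2$ with $\bS$ on the hyperplane $H=\{\bz:\langle\sqrt p,\bz\rangle=0\}$, both use an operator comparison (your hyperplane identity is exactly Lemma~\ref{operatorcomp} with $G=4g$), and both run a leave-one-out Taylor expansion that splits the error into a ``cubic in the increment'' remainder and a Hessian correction --- your $R_A$ and $R_B$ are the paper's $R_1$ and $R_2$.

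Where you diverge: first, you centre the leave-one-out vector, setting $Z^{(i)}=\bS-\xi^{(i)}$ with $\xi^{(i)}_j=(\mathbf{1}[Y_i=j]-p_j)/\sqrt{np_j}$, so that $\xi^{(i)}$, $Z^{(i)}$ and every Taylor intermediate point remain on $H$. The paper instead uses $S^{(i)}_j=S_j-I_j(i)/\sqrt{np_j}$, and $\sum_j\sqrt{p_j}S^{(i)}_j=-n^{-1/2}\neq0$, so its intermediate points are off $H$. Second, and consequently, you bound the remainders by contracting the third derivative tensor of $G$ against $\Sigma_{jk}$ and $\xi^{(i)}_\ell$ on $H$, using $\Sigma\bz=\bz$, $\bz^{\top}\Sigma\bz=|\bz|^2$, $\mathrm{tr}\,\Sigma=m-1$; this is what forces the appearance of $\|xf^{(3)}(x)\|$ alongside $\|f''\|$, so you need Lemma~\ref{xfr0} (\eqref{xfr1}, \eqref{xfr2}) in addition to \eqref{chibound}. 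The paper never contracts: it bounds each $\partial^3g/\partial s_j\partial s_k\partial s_l$ componentwise by (\ref{partderboundz}), then uses the multinomial algebra ($I_j(1)I_k(1)=0$ for $j\neq k$, $\mathbb{E}\tilde I_j I_k=-p_jp_k$) to kill most of the triple sum, and finally invokes Lemma~\ref{qqqppp} for $\mathbb{E}|I_j(1)\xi_k^a|$; only \eqref{chibound} is used to pass from $f$ to $h$, and the factor $1/(m+1)$ in \eqref{pearbound0} comes entirely from there.

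Two small cautions. Your statement that ``the $\tfrac1{m+1}$ cancels the $m+1$ arising in $R_B$'' only describes the $(m+1)\|f''\|$ piece of $R_B$; the $\|xf^{(3)}(x)\|$ piece of $R_B$ carries no such factor and is instead absorbed through $\sqrt m\lesssim\frac{1}{m+1}\sum_j p_j^{-1/2}$ (from $\sum_j p_j^{-1/2}\geq m^{3/2}$), as you indeed invoke. Also note that after contraction your bounds are reached through Cauchy--Schwarz-type steps plus the absorption inequalities, which are generically less tight than the paper's direct use of $I_j(1)I_k(1)=0$; your route will produce a bound of exactly the form $\frac{C}{(m+1)\sqrt n}\sum_j p_j^{-1/2}(\|h\|+\|h'\|+\|h''\|)$, but the numerical constants you would get by ``collecting'' are unlikely to match $6,46,84$ without further optimisation. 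That is a question of constants, not of correctness of the approach, and both arguments pivot on the same essential idea --- passing through the degenerate multivariate normal Stein operator and paying only one $\sqrt n$.
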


Let $p_*=\min_{1\leq i\leq m}p_i$; 
using that 
$\sum_{j=1}^m\frac{1}{\sqrt{p_j}}\le \frac{m}{\sqrt{p_*}}$ the next corollary is immediate from \eqref{pearbound} and \eqref{pearbound0}.

\begin{corollary} Suppose that $np_*\geq1$.  Then, for $h \in C_b^2(\mathbb{R}^+)$,
\begin{equation}\label{pearbound01}
|\mathbb{E} h(W) - \chi^2_{(m-1)} h | \leq \frac{12}{\sqrt{np_*}}\{6\|h\|+46\|h'\|+84\|h''\|\}
\end{equation}
and, for $h \in C_b^5(\mathbb{R}^+)$,
\begin{align}\label{pearbound1}
|\mathbb{E} h(W) - \chi^2_{(m-1)} h | &\leq \frac{4m}{np_*}\{19\|h\|+366\|h'\|+2016\|h''\|\nonumber\\
&\quad+5264\|h^{(3)}\|+106965\|h^{(4)}\|+302922\|h^{(5)}\|\}.
\end{align}
\end{corollary}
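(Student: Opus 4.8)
The plan is simply to feed the uniform bound $\sum_{j=1}^m p_j^{-1/2}\le m\,p_*^{-1/2}$ into Theorems~\ref{pearsqrtn} and~\ref{Pearson_thm_non_int} and then absorb a rational factor that is at most~$1$ (respectively at most~$m$).

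For \eqref{pearbound01}, I would first note that the standing hypothesis $np_*\ge 1$ gives $np_j\ge 1$ for every $j$, so Theorem~\ref{pearsqrtn} is applicable for $h\in C_b^2(\mathbb{R}^+)$. Inserting $\sum_{j=1}^m p_j^{-1/2}\le m\,p_*^{-1/2}$ into \eqref{pearbound0} replaces the prefactor $\frac{12}{(m+1)\sqrt{n}}\sum_{j=1}^m p_j^{-1/2}$ by $\frac{12m}{(m+1)\sqrt{np_*}}$, and since $\frac{m}{m+1}<1$ this is at most $\frac{12}{\sqrt{np_*}}$, which is precisely \eqref{pearbound01}.

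For \eqref{pearbound1}, I would proceed in the same way, now starting from \eqref{pearbound}. Here one should check that the hypotheses of Theorem~\ref{Pearson_thm_non_int} hold: with $m\ge 2$ classes, $\sum_{j=1}^m p_j=1$ forces $p_*\le 1/m$, so $np_*\ge 1$ yields $n\ge 1/p_*\ge m\ge 2$, and $np_j\ge np_*\ge 1$; thus all the assumptions of Theorem~\ref{Pearson_thm_non_int} are met for $h\in C_b^5(\mathbb{R}^+)$. Squaring $\sum_{j=1}^m p_j^{-1/2}\le m\,p_*^{-1/2}$ gives $\big(\sum_{j=1}^m p_j^{-1/2}\big)^2\le m^2/p_*$, and substituting into \eqref{pearbound} turns the prefactor $\frac{4}{(m+1)n}\big(\sum_{j=1}^m p_j^{-1/2}\big)^2$ into $\frac{4m^2}{(m+1)np_*}$; since $m^2<m(m+1)$ this is at most $\frac{4m}{np_*}$, giving \eqref{pearbound1}.

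There is essentially no obstacle: the derivation is a one-line substitution in each case, and the only point deserving a word of comment is that the assumption $np_*\ge 1$ (together with the implicit $m\ge 2$) is strong enough to imply the hypotheses of the two source theorems, in particular the condition $n\ge 2$ needed for Theorem~\ref{Pearson_thm_non_int}, which follows from $p_*\le 1/m$ as noted above.
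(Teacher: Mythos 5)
Your proposal is correct and follows essentially the same route as the paper: the authors likewise substitute $\sum_{j=1}^m p_j^{-1/2}\le m/\sqrt{p_*}$ into \eqref{pearbound} and \eqref{pearbound0} and absorb the factor $m/(m+1)$ (resp.\ $m^2/(m+1)$). Your explicit verification that $np_*\ge 1$ and $m\ge 2$ force $n\ge 2$ is a small but welcome addition the paper leaves implicit.
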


Applying a basic technique for converting smooth test function bounds into Kolmogorov distance bounds, which can be found in \cite{chen}, p$.$ 48, to bound (\ref{pearbound01}) gives  Kolmogorov distance bounds for the rate of convergence of Pearson's statistic.  The standard proof is given in Section 5.

\begin{corollary}\label{kolcol}Let $Y_d$ denote a $\chi_{(d)}^2$ random variable and suppose $np_*\geq1$.  Then
\begin{align*}&\sup_{z>0}|\mathbb{P}(W\leq z)-\mathbb{P}(Y_{m-1}\leq z)| \\
&\leq  \begin{cases} \displaystyle \frac{1}{(np_*)^{1/10}}\bigg\{8+\frac{21}{(np_*)^{1/5}}+\frac{72}{(np_*)^{2/5}}\bigg\}, & \:  m=2, \\
\displaystyle \frac{1}{(np_*)^{1/6}}\bigg\{19+\frac{44}{(np_*)^{1/6}}+\frac{72}{(np_*)^{1/3}}\bigg\}, & \:  m=3, \\
\displaystyle \frac{1}{(m-3)^{1/3}(np_*)^{1/6}}\bigg\{13+\frac{37(m-3)^{1/6}}{(np_*)^{1/6}}+\frac{72(m-3)^{1/3}}{(np_*)^{1/3}}\bigg\}, & \:   m\geq4. \end{cases}
\end{align*}
\end{corollary}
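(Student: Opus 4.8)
The plan is to apply the standard device for turning a smooth-test-function bound into a Kolmogorov-distance bound (as in \cite{chen}, p.~48): approximate the indicator $\mathbf{1}_{[0,z]}$ by a $C_b^2(\mathbb{R}^+)$ function and feed it into \eqref{pearbound01}. For $z>0$ and a smoothing parameter $\varepsilon\in(0,1)$, fix $h_{z,\varepsilon}\in C_b^2(\mathbb{R}^+)$ with $\mathbf{1}_{[0,z]}\le h_{z,\varepsilon}\le\mathbf{1}_{[0,z+\varepsilon]}$, $\|h_{z,\varepsilon}\|\le 1$, $\|h_{z,\varepsilon}'\|\le c_1\varepsilon^{-1}$ and $\|h_{z,\varepsilon}''\|\le c_2\varepsilon^{-2}$ for absolute constants $c_1,c_2$ (for instance the quintic smoothstep, which interpolates between the two levels on $[z,z+\varepsilon]$ with vanishing first and second derivatives at the endpoints). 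Then $\mathbb{P}(W\le z)\le\mathbb{E}h_{z,\varepsilon}(W)$, and applying \eqref{pearbound01} to $h_{z,\varepsilon}$, together with $\sum_{j}p_j^{-1/2}\le m p_*^{-1/2}$ and $m/(m+1)\le 1$, gives
\[
\mathbb{P}(W\le z)-\mathbb{P}(Y_{m-1}\le z)\le R(\varepsilon)+\mathbb{P}\big(Y_{m-1}\in(z,z+\varepsilon]\big),\qquad R(\varepsilon):=\frac{12}{\sqrt{np_*}}\Big(6+\frac{46c_1}{\varepsilon}+\frac{84c_2}{\varepsilon^2}\Big).
\]
An analogous lower bound using $h_{z-\varepsilon,\varepsilon}$ (with the harmless convention that the lower indicator vanishes on $\mathbb{R}^+$ when $z\le\varepsilon$) yields $\sup_{z>0}|\mathbb{P}(W\le z)-\mathbb{P}(Y_{m-1}\le z)|\le R(\varepsilon)+\rho_m(\varepsilon)$, where $\rho_m(\varepsilon):=\sup_{z>0}\mathbb{P}(Y_{m-1}\in(z,z+\varepsilon])$.

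The second step is to bound $\rho_m(\varepsilon)$, and here the three cases split according to the behaviour of the $\chi^2_{(m-1)}$ density $p_{m-1}$ near the origin. For $m=3$, $Y_2\sim\mathrm{Exp}(1/2)$ has $\|p_2\|_\infty=1/2$, so $\rho_3(\varepsilon)\le\varepsilon/2$. For $m\ge 4$, the density $p_{m-1}$ is maximised at its mode $m-3$, and a Stirling lower bound on $\Gamma\big(\tfrac{m-1}{2}\big)$ gives an explicit estimate $\|p_{m-1}\|_\infty\le C_0(m-3)^{-1/2}$ (one may take $C_0=\tfrac12\pi^{-1/2}\mathrm e^{2/3}$), hence $\rho_m(\varepsilon)\le C_0(m-3)^{-1/2}\varepsilon$. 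For $m=2$, the density $\tfrac{1}{\sqrt{2\pi x}}\mathrm e^{-x/2}$ of $Y_1$ is unbounded at $0$ but integrable there, and $\rho_2(\varepsilon)\le\int_0^\varepsilon(2\pi t)^{-1/2}\,\mathrm dt=\sqrt{2\varepsilon/\pi}$.

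Finally I would optimise $\varepsilon$ in $R(\varepsilon)+\rho_m(\varepsilon)$ case by case. The dominant part of $R(\varepsilon)$ is $1008\,c_2(np_*)^{-1/2}\varepsilon^{-2}$; balancing it against $\rho_m(\varepsilon)$ gives, for $m\ge 3$ (where $\rho_m(\varepsilon)\asymp\varepsilon$), the choice $\varepsilon\asymp(np_*)^{-1/6}$, with an extra factor $(m-3)^{-1/3}$ absorbed from $\|p_{m-1}\|_\infty$ when $m\ge 4$; and for $m=2$ (where $\rho_2(\varepsilon)\asymp\sqrt\varepsilon$), the choice $\varepsilon\asymp(np_*)^{-1/5}$, yielding the rate $(np_*)^{-1/10}$. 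Substituting the optimal $\varepsilon$ and separating the three resulting contributions---the combined $\rho_m$-and-$\varepsilon^{-2}$ term, the $\varepsilon^{-1}$ term, and the unamplified term $6\cdot 12(np_*)^{-1/2}=72(np_*)^{-1/2}$ (which is the common last summand, rewritten in each case as $72$ times the appropriate power of $(np_*)^{-1}$)---produces exactly the three displayed bounds. The only genuinely delicate points are pinning down the smoothing construction so that $c_1,c_2$ are small enough to recover the stated numerical constants ($8,21,72$; $19,44,72$; $13,37,72$), and, for $m\ge 4$, obtaining a clean explicit bound of the form $C_0(m-3)^{-1/2}$ for the maximal $\chi^2_{(m-1)}$ density; both are routine but require care with constants.
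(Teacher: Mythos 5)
Your proposal is correct and follows essentially the same route as the paper: the paper also applies the smoothing device of \cite{chen} to the bound \eqref{pearbound01}, using an explicit piecewise-quadratic smoother $h_\alpha$ with $\|h_\alpha'\|=2/\alpha$, $\|h_\alpha''\|=4/\alpha^2$, bounds the $\chi^2_{(m-1)}$ concentration term exactly as you do (integrable singularity for $m=2$, density $\le 1/2$ for $m=3$, mode plus Stirling giving $\tfrac{\alpha}{2\sqrt{\pi(m-3)}}$ for $m\ge4$), and then optimises $\alpha$ of order $(np_*)^{-1/5}$ resp.\ $(np_*)^{-1/6}$. The only difference is that the paper fixes this particular smoother (whose constants are slightly better than a quintic smoothstep) and specific numerical choices of $\alpha$ to land on the stated constants, which is exactly the constant-chasing you flag as the remaining routine work.
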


\begin{remark}The assumption $np_*\geq 1$ is very mild (bounds (\ref{pearbound}) and (\ref{pearbound0}) are uninformative otherwise) and is included for the sole purpose of simplifying calculations.  Indeed, we do not claim that the numerical constants in our bounds are close to optimal.  In order to simplify the calculations and obtain a compact final bound, we make a number of simple and rather crude approximations.   We do, however, take care to ensure that these approximations do not affect the role of $m$, $n$ and $p_1,\ldots, p_m$ in our final bounds.
\end{remark}

\begin{remark}For fixed $m$, bounds (\ref{pearbound01}) and (\ref{pearbound1}) depend on $n$ and $p_*$ in the correct way, in the sense that they tend to zero if and only if $np_*\rightarrow\infty$.  This is  an established condition under which Pearson's statistic $W$ converges to the $ \chi^2_{(m-1)}$ distribution; see  \cite{greenwood}.  However, if we allow $m$ to vary with $n$, then Theorem \ref{pearsqrtn} shows that this is not a necessary condition; some cell probabilities may be of the order $n^{-1}$ as long as $m$ is large and the bound could still be small. To illustrate this point, assume that $p_1 = n^{-1}$ and for $j=2, \ldots, m$, $p_j = \frac{n-1}{n(m-1)}.$ Then $\sum_{i=1}^m p_i = 1$ and the bound in  Theorem \ref{pearsqrtn} is of order $\frac{1}{m} + \sqrt{\frac{m}{n}}$. As long as $m =m(n) \rightarrow \infty$ as $n \rightarrow \infty$ such that $m/n\rightarrow 0$ as $n \rightarrow \infty$ the chi-square approximation is valid.
\end{remark}

\begin{remark}Considering the situation where $m\rightarrow\infty$ as $n\rightarrow\infty$ more generally,   \cite{koehler} give an example of when this might occur in contingency table analysis by considering cross-classification of discrete variables.  \cite{tumanyan} showed that the null distribution of the Pearson statistic is asymptotically normal if $np_*\rightarrow\infty$ and $m\rightarrow\infty$.  Note that this result is consistent with the chi-square convergence result since the chi-square distribution converges to the normal as the degrees of freedom increase.  An attractive property of bound (\ref{pearbound01}) is that it does not involve $m$, meaning that it tends to zero if $np_*\rightarrow\infty$ and $m\rightarrow\infty$.  However, bound (\ref{pearbound1}) does involve $m$ and as a result tends to zero only if the stronger condition $np_*/m\rightarrow\infty$ holds.  This, along with the very large numerical constants indicates a weakness in the bound; the price we pay for the faster $n^{-1}$ convergence rate.
\end{remark}

\begin{remark}The Kolmogorov distance bound of Corollary \ref{kolcol} has a suboptimal dependence on $n$.  This is perhaps to be expected, as the bound was derived by applying a fairly crude non-smooth test function approximation technique to the smooth test function bound (\ref{pearbound01}).  However, it is the first bound in the literature for the rate of convergence in Kolmogorov distance of Pearson's statistic that depends on $n$ and $p_1,\ldots,p_m$ in the correct manner, in the sense that it tends to zero if and only if $np_*\rightarrow\infty$.  In fact, to the best of our knowledge there does not exist bound in any metric for the rate of convergence of Pearson's statistic that depends on $n$ and $p_1,\ldots,p_m$ in this manner.  

This demonstrates the power of Stein's method, which allows us to use the multivariate normal Stein equation to effectively deal with the dependence structure of Pearson's statistic (see the proofs of Theorems \ref{Pearson_thm_non_int} and \ref{pearsqrtn}).  A direction for future research would be to use Stein's method, with an approach tailored to non-smooth test functions, to Pearson's statistic.  With this approach it may be possible to obtain a bound which tends to zero if any only if $np_*\rightarrow\infty$ but with a better rate in $n$.
\end{remark}

Let us now prove Theorem \ref{Pearson_thm_non_int}; the proof of Theorem \ref{pearsqrtn} is much simpler and is given in Section 5.

\vspace{3mm}

\noindent \emph{Proof of Theorem \ref{Pearson_thm_non_int}.}  As was the case for the proof of Theorem \ref{chiiid}, the proof comes in two parts.  The first part includes expansions and bounds,
and the second part includes the symmetry argument.

\medskip
\noindent{\bf{Proof Part I: Expansions and bounding}}

\medskip 
 Let $S_j = \frac{1}{\sqrt{np_j}}(U_j - np_j)$, so that $S_j$ denotes the standardised cell counts, and notice that $\sum_{j=1}^m \sqrt{p_j}S_j = 0$; also $U_j \sim \mathrm{Bin}(n, p_j)$ for each $j$. 
Now, 
\begin{equation*} W =  \sum_{j=1}^m S_j^2.  \end{equation*}  
This is analogous to the already studied case of independent summands in a chi-square statistic. 
 The key difference in this application is that the $S_j$ are \emph{not} independent, although they can be 
constructed from independent indicators.  Let $I_j(i)$ be the indicator that trial $i$ results in classification 
in cell $j$, and let $\tilde{I_j}(i) = I_j(i) - p_j$ be its standardised version.  Then it is clear that $S_j = \frac{1}{\sqrt{np_j}}\sum_{i=1}^n \tilde{I}_j(i)$.  

To take advantage of the independence of the indicators $I_j(1), \ldots, I_j(n)$, we define $S_j^{(i)}=S_j-\frac{1}{\sqrt{np_j}}I_j(i)$ and denote the vector $(S_1^{(i)},\ldots,S_{m}^{(i)})$ by $\mathbf{S}^{(i)}$.  Note that $\mathbf{S}^{(i)}$ is independent of $I_j(i)$ for $i=1,\ldots,n$.  The following expressions for the moments of the $S_j^{(i)}$ are straightforward:
\begin{lemma}\label{thismoment}The second, fourth and sixth moments of $S_j^{(i)}$ are given by
\begin{eqnarray*}\mathbb{E}(S_j^{(i)})^2&=&\frac{(n-1)(1-p_j)}{n}+\frac{p_j}{n}, \\
\mathbb{E}(S_j^{(i)})^4&=&3(1-p_j)^2\frac{(n-1)}{n}+\frac{(n-1)}{n^2p_j}(1-p_j)(1-13p_j+23p_j^2)+\frac{p_j^2}{n^2}, \\
\mathbb{E}(S_j^{(i)})^6&=& 15(1-p_j)^3+\frac{P_1(p_j)}{np_j}+\frac{P_2(p_j)}{(np_j)^2}+\frac{P_3(p_j)}{(np_j)^2n},
\end{eqnarray*}
where
\begin{eqnarray*}P_1(p_j)&=&(1-p_j)(1-87p_j+724p_j^2-1626p_j^3+1044p_j^4),\\
P_2(p_j)&=&5(1-p_j)^2(5-47p_j+68p_j^2),\\
P_3(p_j)&=&-(1-2p_j)(1-60p_j+420p_j^2-720p_j^3+360p_j^4).
\end{eqnarray*}

If $np_j\geq 1$, then $\mathbb{E}|S^{(i)}|<1$, $\mathbb{E}(S_j^{(i)})^2<1$, $\mathbb{E}|S_j^{(i)}|^3<4^{3/4}$, $\mathbb{E}(S_j^{(i)})^4<4$, $\mathbb{E}|S_j^{(i)}|^5<42^{5/6}$ and $\mathbb{E}(S_j^{(i)})^6<42$.
\end{lemma}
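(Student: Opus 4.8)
The plan is to identify the law of $S_j^{(i)}$ explicitly and then read the moments off a binomial computation. Recall $S_j=\frac{1}{\sqrt{np_j}}\sum_{i=1}^n\tilde I_j(i)$ with $\tilde I_j(i)=I_j(i)-p_j$, and $S_j^{(i)}=S_j-\frac{1}{\sqrt{np_j}}I_j(i)$. Since $\sum_{k=1}^n\tilde I_j(k)-I_j(i)=\sum_{k\neq i}\tilde I_j(k)-p_j$,
\[
S_j^{(i)}=\frac{1}{\sqrt{np_j}}\Big(\sum_{k\neq i}\tilde I_j(k)-p_j\Big)=\frac{V_j-np_j}{\sqrt{np_j}},\qquad V_j:=\sum_{k\neq i}I_j(k)\sim\mathrm{Bin}(n-1,p_j).
\]
So $S_j^{(i)}$ is a centred and scaled binomial, and $\mathbb{E}(S_j^{(i)})^r$ is just the $r$-th moment of $\mathrm{Bin}(n-1,p_j)$ taken about the point $np_j$ rather than about its mean $(n-1)p_j$; the shift by $p_j$ is precisely what generates the extra lower-order terms in the stated formulas.

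First I would expand
\[
\mathbb{E}(S_j^{(i)})^r=(np_j)^{-r/2}\,\mathbb{E}(V_j-np_j)^r=(np_j)^{-r/2}\sum_{l=0}^r\binom{r}{l}(-p_j)^{r-l}\,\mu_l,
\]
where $\mu_l$ is the $l$-th central moment of $\mathrm{Bin}(n-1,p_j)$. For $l\le 6$ these follow from the per-trial moments $\mathbb{E}\tilde I_j(k)^l=p_jq_j^l+q_j(-p_j)^l$ (with $q_j=1-p_j$), which give $p_jq_j$, $p_jq_j(1-2p_j)$, $p_jq_j(1-3p_jq_j)$, $p_jq_j(1-2p_j)(1-2p_jq_j)$, $p_jq_j(1-5p_jq_j+5p_j^2q_j^2)$, combined with the partition expansion of $\mathbb{E}\big(\sum_{k=1}^{n-1}\tilde I_j(k)\big)^l$ over set partitions all of whose blocks have size $\ge 2$ (the only ones surviving, as $\mathbb{E}\tilde I_j(k)=0$); for $l=6$ this expansion has the four terms indexed by the partition types $6$, $4{+}2$, $3{+}3$, $2{+}2{+}2$. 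Substituting, rewriting everything in terms of $p_j$ and $n$, and collecting by powers of $(np_j)^{-1}$ and $n^{-1}$ yields the three displayed identities, with $P_1,P_2,P_3$ being exactly what remains after the collection. This step is entirely routine but lengthy, and I would carry it out, and check it, with a computer algebra system.

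For the numerical bounds I would proceed as follows. From the first identity, $\mathbb{E}(S_j^{(i)})^2=\frac{(n-1)(1-p_j)+p_j}{n}=1-\frac{1+(n-2)p_j}{n}<1$ (for $n\ge 2$), and hence $\mathbb{E}|S_j^{(i)}|\le(\mathbb{E}(S_j^{(i)})^2)^{1/2}<1$ by Jensen's inequality. For the fourth and sixth moments I would use $np_j\ge 1$, so that $(np_j)^{-1}\le 1$ and $\frac{n-1}{n}<1$, to reduce each identity to an estimate of a fixed polynomial in $p_j\in[0,1]$ (the leading coefficients $3$ and $15$ are the Gaussian moments $\mathbb{E}Z^4$ and $\mathbb{E}Z^6$); a short estimate of these polynomials, noting that along the boundary $np_j=1$ the expressions are largest as $p_j\downarrow 0$, gives $\mathbb{E}(S_j^{(i)})^4<4$ and $\mathbb{E}(S_j^{(i)})^6<42$. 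The two remaining bounds then follow from Lyapunov's inequality: $\mathbb{E}|S_j^{(i)}|^3\le(\mathbb{E}(S_j^{(i)})^4)^{3/4}<4^{3/4}$ and $\mathbb{E}|S_j^{(i)}|^5\le(\mathbb{E}(S_j^{(i)})^6)^{5/6}<42^{5/6}$.

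The main obstacle is not conceptual but bookkeeping: correctly assembling the sixth-moment identity once the binomial shift mixes the four partition terms, and then certifying the strict inequalities $<4$ and $<42$. These constants appear essentially sharp, being approached as $p_j=1/n$ with $n\to\infty$, so I would take care that the terms carrying negative coefficients — notably the $P_3$ term, and the factor $1-13p_j+23p_j^2$ in the fourth moment, which changes sign on $[0,1]$ — do not spoil the bound, verifying the relevant extremum on the boundary $np_j=1$ explicitly.
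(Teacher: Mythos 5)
Your proposal is correct and takes essentially the same route as the paper: identify $S_j^{(i)}$ as a shifted, scaled $\mathrm{Bin}(n-1,p_j)$ and read the moments off standard binomial moment formulas, then use $np_j\ge 1$ and $\frac{n-1}{n}<1$ to reduce the fourth and sixth moments to polynomial estimates in $p_j$ (maximized at $p_j=0$, giving $4$ and $42$), and finish the odd absolute moments by Lyapunov/H\"older. The paper merely cites Stuart \& Ord for the binomial moments rather than deriving them, but the structure and all the key observations (absolute values around the sign-changing factors, the extremum at $p_j\downarrow 0$) coincide with yours.
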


\begin{proof}The equalities can be obtained by using the formulas for the first six moments of the Binomial distribution, which are given in \cite{stuart}.  To obtain the inequalities for the fourth and sixth moments, note that in the interval $[0,1]$ the functions $g_1(p)=3(1-p)^2+(1-p)|1-13p+23p^2|+p^2$ and $g_2(p)=15(1-p)^3+|P_1(p)|+|P_2(p)|+|P_3(p)|$ take their maximum value at $p=0$.  The inequalities for the first, third and fifth absolute moments follow from applying H\"{o}lder's inequality to the  inequalities for the second, fourth and sixth moments, respectively.
\end{proof}

We shall take a similar approach to before, where we converted the $\chi^2_{(1)}$ Stein equation into the $N(0,1)$ Stein equation; this time, however, we shall convert to a multivariate normal, which has Stein equation (see, for example  \cite{goldstein1}):
\begin{equation}\label{mvn1} \nabla^T\Sigma\nabla f(\mathbf{s})-\mathbf{s}^T\nabla f(\mathbf{s})=h(\mathbf{s})-\mathbb{E}h(\mathbf{Z}),
\end{equation}
where $\mathbf{Z}\sim \mathrm{MVN}(\mathbf{0},\Sigma)$.  In analogy with \eqref{normalop} we define the Stein operator 
\begin{equation}\label{multinormalop}
\AAA_{\mathrm{MVN}(\mathbf{0},\Sigma)} f(\bs) =  \nabla^T\Sigma\nabla f(\mathbf{s})-\mathbf{s}^T\nabla f(\mathbf{s}) .
\end{equation} 
Since for  $\mathbf{S} = (S_1, \ldots, S_m)$, the covariance matrix of $\bS$ is  $\Sigma_{\mathbf{S}} = \Sigma_\mathbf{U}$, where $\Sigma_\mathbf{U}$ denotes the covariance matrix of $\mathbf{U}$, it follows
by a simple random sampling without replacement argument (see \cite{rice}, Section 7.3) that 
$\Sigma_{\mathbf{S}} = (\sigma_{jk})$, has entries
\begin{equation*}
 \sigma_{jj} = 1-p_j \quad \text{and} \quad \sigma_{jk} = -\sqrt{p_jp_k}, \quad j\not=k. \end{equation*}

The following connection between the $\chi^2_{(m-1)}$ and $\mathrm{MVN}(\mathbf{0}, \Sigma_{\mathbf{S}})$ Stein equations will be proven  in Section 5.

\begin{lemma} \label{operatorcomp} Let $\mathcal{A}_{\mathrm{MVN}(\mathbf{0},\Sigma_\mathbf{S})}$ be given in \eqref{multinormalop} and let $\AAA_{m-1}$ be given in \eqref{chiop}.  Let  $f \in C^2(\mathbb{R})$ and define $g: \mathbb{R}^m \to \mathbb{R}$ by $g(\mathbf{s}) = \frac{1}{4}f(w)$ with $w = \sum_{i=1}^m s_i^2$ for $\bs = (s_1, \ldots, s_m)$. If $\sum_{j=1}^m \sqrt{p_j}s_j = 0$ then 
\begin{equation*} \mathcal{A}_{\mathrm{MVN}(\mathbf{0},\Sigma_{\mathbf{S}})} g(\mathbf{s}) =  \mathcal{A}_{m-1} f(w). 
\end{equation*}
\end{lemma}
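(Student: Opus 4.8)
The plan is a direct computation, reducing the multivariate normal Stein operator applied to $g$ to the chi-square Stein operator applied to $f$. First I would record the derivatives of $g$. Since $g(\mathbf{s}) = \tfrac14 f(w)$ with $w = \sum_{i=1}^m s_i^2$, the chain rule gives $\partial_j g(\mathbf{s}) = \tfrac12 s_j f'(w)$ and $\partial_j\partial_k g(\mathbf{s}) = \tfrac12 \delta_{jk} f'(w) + s_j s_k f''(w)$; these are valid wherever $f \in C^2$, so there is no regularity issue.

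Next I would substitute into the operator \eqref{multinormalop}. For the second-order term, $\nabla^T\Sigma_{\mathbf{S}}\nabla g(\mathbf{s}) = \sum_{j,k}\sigma_{jk}\partial_j\partial_k g(\mathbf{s}) = \tfrac12 f'(w)\sum_j \sigma_{jj} + f''(w)\sum_{j,k}\sigma_{jk} s_j s_k = \tfrac12 f'(w)\,\mathrm{tr}(\Sigma_{\mathbf{S}}) + f''(w)\,\mathbf{s}^T\Sigma_{\mathbf{S}}\mathbf{s}$, while the first-order term is $\mathbf{s}^T\nabla g(\mathbf{s}) = \tfrac12 f'(w)\sum_j s_j^2 = \tfrac12 w f'(w)$.

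Then I would evaluate the two quantities involving $\Sigma_{\mathbf{S}}$ using its rank-one structure. Writing $\sqrt{\mathbf{p}} = (\sqrt{p_1},\ldots,\sqrt{p_m})^T$, the entries $\sigma_{jj} = 1-p_j$, $\sigma_{jk} = -\sqrt{p_jp_k}$ show that $\Sigma_{\mathbf{S}} = I_m - \sqrt{\mathbf{p}}\,\sqrt{\mathbf{p}}^T$. Hence $\mathrm{tr}(\Sigma_{\mathbf{S}}) = m - \sum_{j=1}^m p_j = m-1$, and $\mathbf{s}^T\Sigma_{\mathbf{S}}\mathbf{s} = \mathbf{s}^T\mathbf{s} - \big(\sqrt{\mathbf{p}}^T\mathbf{s}\big)^2 = w - \big(\sum_{j=1}^m\sqrt{p_j}s_j\big)^2 = w$, where the last step invokes the hypothesis $\sum_{j=1}^m\sqrt{p_j}s_j = 0$. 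Combining, $\AAA_{\mathrm{MVN}(\mathbf{0},\Sigma_{\mathbf{S}})}g(\mathbf{s}) = w f''(w) + \tfrac12(m-1)f'(w) - \tfrac12 w f'(w) = w f''(w) + \tfrac12(m-1-w)f'(w) = \AAA_{m-1}f(w)$, as claimed.

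There is no real obstacle here; the only point requiring care is the bookkeeping with the rank-one correction $-\sqrt{\mathbf{p}}\,\sqrt{\mathbf{p}}^T$, and in particular the observation that it is precisely the linear constraint $\sum_j\sqrt{p_j}s_j = 0$ satisfied by the standardised multinomial cell counts that makes the quadratic form $\mathbf{s}^T\Sigma_{\mathbf{S}}\mathbf{s}$ collapse to $w$, matching the coefficient of $f''$ in the chi-square Stein operator, while $\sum_j p_j = 1$ accounts for the drop from $m$ to $m-1$ degrees of freedom.
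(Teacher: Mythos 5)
Your proposal is correct and follows essentially the same route as the paper: compute the first and second derivatives of $g$ by the chain rule, use $\sum_j p_j = 1$ to get the $\frac{m-1}{2}f'(w)$ term, and use the constraint $\sum_j \sqrt{p_j}s_j = 0$ to collapse the quadratic form to $w f''(w)$. The only difference is presentational — you exploit the rank-one form $\Sigma_{\mathbf{S}} = I_m - \sqrt{\mathbf{p}}\sqrt{\mathbf{p}}^T$ to treat trace and quadratic form at once, whereas the paper sums the diagonal and off-diagonal contributions separately.
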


We wish to bound $\mathbb{E}\mathcal{A}_{m-1}g(W)$, which by Lemma \ref{operatorcomp}  is equivalent to bounding $\mathbb{E}\mathcal{A}_{\mathrm{MVN}(\mathbf{0},\Sigma_\mathbf{S})}g(\mathbf{S})$.  As the indicators $I_j(1),I_j(2), \ldots, I_j(n)$ are identically distributed, it follows that
\[\sum_{j=1}^{m}\mathbb{E}S_j\frac{\partial g}{\partial s_j}(\mathbf{S})=\frac{1}{\sqrt{n}}\sum_{i=1}^n\sum_{j=1}^{m}\frac{1}{\sqrt{p_j}}\mathbb{E}\tilde{I}_j(i)\frac{\partial g}{\partial s_j}(\mathbf{S})=\sqrt{n}\sum_{j=1}^{m}\frac{1}{\sqrt{p_j}}\mathbb{E}\tilde{I}_j(1)\frac{\partial g}{\partial s_j}(\mathbf{S}).\]
We now Taylor expand and use the independence of $\mathbf{S}^{(1)}$ and the $I_j(1)$ to obtain
\begin{align*}\sum_{j=1}^m\mathbb{E}S_j\frac{\partial g}{\partial s_j}(\mathbf{S})&=\sqrt{n}\sum_{j=1}^m\frac{1}{\sqrt{p_j}}\mathbb{E}\tilde{I}_j(1)\mathbb{E}\frac{\partial g}{\partial s_j}(\mathbf{S}^{(1)})\\
&\quad+\sum_{j=1}^m\sum_{k=1}^m\frac{1}{\sqrt{p_jp_k}}\mathbb{E}\tilde{I}_j(1)I_k(1)\mathbb{E}\frac{\partial^2g}{\partial s_j\partial s_k}(\mathbf{S}^{(1)}) +N_1+R_1,
\end{align*}
where
\begin{align*}N_1&=\frac{1}{2\sqrt{n}}\sum_{j=1}^m\sum_{k=1}^m\sum_{l=1}^{m}\frac{1}{\sqrt{p_jp_kp_l}}\mathbb{E}\tilde{I}_j(1)I_k(1)I_l(1)\mathbb{E}\frac{\partial^3g}{\partial s_j\partial s_k\partial s_l}(\mathbf{S}^{(1)}),\\
R_1&=\frac{1}{6n}\sum_{j=1}^m\sum_{k=1}^m\sum_{l=1}^{m}\sum_{t=1}^{m}\frac{1}{\sqrt{p_jp_kp_lp_t}}\mathbb{E}\tilde{I}_j(1)I_k(1)I_l(1)I_t(1)\frac{\partial^4g}{\partial s_j\partial s_k\partial s_l\partial s_t}(\boldsymbol{\xi}).
\end{align*}
Here and throughout the proof $\boldsymbol{\xi}$ denotes a vector with $m$ entries with $j$-th entry $\xi_j=S_j^{(1)}+\frac{\theta_j}{\sqrt{np_j}}I_j(1)$ for some $\theta_j\in(0,1)$.   Using that $\mathbb{E}\tilde{I}_j=0$ and then Taylor expanding gives
\begin{align*}\sum_{j=1}^m\mathbb{E}S_j\frac{\partial g}{\partial s_j}(\mathbf{S})&=\sum_{j=1}^m\sum_{k=1}^m\frac{1}{\sqrt{p_jp_k}}\mathbb{E}\tilde{I}_j(1)I_k(1)\mathbb{E}\frac{\partial^2g}{\partial s_j\partial s_k}(\mathbf{S}^{(1)}) \!+\!N_1\!+\!R_1 \\
&=\sum_{j=1}^m\sum_{k=1}^m\frac{1}{\sqrt{p_jp_k}}\mathbb{E}\tilde{I}_j(1)I_k(1)\mathbb{E}\frac{\partial^2g}{\partial s_j\partial s_k}(\mathbf{S}) \!+\!N_1\!+\!N_2\!+\!R_1\!+R_2,
\end{align*}
where
\begin{align*}N_2&=-\frac{1}{\sqrt{n}}\sum_{j=1}^m\sum_{k=1}^m\sum_{l=1}^{m}\frac{1}{\sqrt{p_jp_kp_l}}\mathbb{E}\tilde{I}_j(1)I_k(1)\mathbb{E}I_l(1)\frac{\partial^3g}{\partial s_j\partial s_k\partial s_l}(\mathbf{S}),\\
R_2&=-\frac{1}{2n}\sum_{j=1}^m\sum_{k=1}^m\sum_{l=1}^{m}\sum_{t=1}^{m}\frac{1}{\sqrt{p_jp_kp_lp_t}}\mathbb{E}\tilde{I}_j(1)I_k(1)\mathbb{E}I_l(1)I_t(1)\frac{\partial^4g}{\partial s_j\partial s_k\partial s_l\partial s_t}(\boldsymbol{\xi}),
\end{align*}

Note that the vector $\boldsymbol{\xi}$ in the above formula for $R_2$ is in general different from the $\boldsymbol{\xi}$ in the expression for $R_1$, as was the case in the proof of Theorem \ref{chiiid}.  As  $\tilde{I}_j(i)I_j(i)=(1-p_j)I_j(i)$ and, for $j\not=k$, $\tilde{I}_j(i)I_k(i)=(I_j(i)-p_j)I_k(i)=-p_jI_k(i)$, since each trial leads to a unique classification,  taking expectations gives
\begin{equation}
\label{crossmeans} \mathbb{E}\tilde{I}_j(i)I_j(i)=p_j(1-p_j) \quad \mbox{and} \quad \mathbb{E}\tilde{I}_j(i)I_k(i)=-p_jp_k, \: j\not=k,
\end{equation} 
and so
\[\frac{1}{p_j}\mathbb{E}\tilde{I}_j(i)I_j(i)=1-p_j=\sigma_{jj}; \quad \frac{1}{\sqrt{p_jp_k}}\mathbb{E}\tilde{I}_j(i)I_k(i)=-\sqrt{p_jp_k}=\sigma_{jk}, \: j\not=k.\]
Therefore
\[\mathbb{E}\bigg[\sum_{j=1}^m\sum_{k=1}^m\sigma_{jk}\frac{\partial^2g}{\partial s_j\partial s_k}(\mathbf{S})-\sum_{j=1}^mS_j\frac{\partial g}{\partial s_j}(\mathbf{S})\bigg]=-N_1-N_2-R_1-R_2,\]
and it remains to bound the terms $N_1$, $N_2$, $R_1$ and $R_2$.  

Before bounding these terms, we deal with $N_2$.  Taylor expanding $\frac{\partial^3g}{\partial s_j\partial s_k\partial s_l}(\mathbf{S})$ about $\mathbf{S}^{(1)}$ yields
\[N_2=N_3+R_3,\]
where
\begin{align*}N_3&=-\frac{1}{\sqrt{n}}\sum_{j=1}^m\sum_{k=1}^m\sum_{l=1}^{m}\frac{1}{\sqrt{p_jp_kp_l}}\mathbb{E}\tilde{I}_j(1)I_k(1)\mathbb{E}I_l(1)\mathbb{E}\frac{\partial^3g}{\partial s_j\partial s_k\partial s_l}(\mathbf{S}^{(1)})
\end{align*}
and
\begin{align*}
R_3&=\frac{1}{n}\sum_{j=1}^m\sum_{k=1}^m\sum_{l=1}^{m}\sum_{t=1}^{m}\frac{1}{\sqrt{p_jp_kp_lp_t}}\mathbb{E}\tilde{I}_j(1)I_k(1)\mathbb{E}I_l(1)I_t(1)\frac{\partial^4g}{\partial s_j\partial s_k\partial s_l\partial s_t}(\boldsymbol{\xi}).
\end{align*}

We can immediately bound $R_1$, $R_2$ and $R_3$ to the desired order of $O(n^{-1})$, but a more detailed calculation, involving symmetry arguments, is required to bound $N_1$ and $N_3$ to this order.  We begin by bounding $R_1$.  Recalling that $\tilde{I}_j(i)I_j(i)=(1-p_j)I_j(i)$, $\tilde{I}_j(i)I_k(i)=-p_jI_k(i)$ for $j\not=k$ and $I_j(i)I_k(i)=0$ for $j\not=k$, 
\begin{align}|R_1|&=\frac{1}{6n}\bigg|\sum_{j=1}^m\frac{1-p_j}{p_j^2}\mathbb{E}I_j(1)\frac{\partial^4g}{\partial s_j^4}(\boldsymbol{\xi})-\sum_{j=1}^m\sum_{k\not=j}^m\frac{\sqrt{p_j}}{p_k^{3/2}}\mathbb{E}I_k(1)\frac{\partial^4 g}{\partial s_j\partial s_k^3}(\boldsymbol{\xi})\bigg| \nonumber \\
\label{r11111}&\leq\frac{1}{6n}\bigg\{\sum_{j=1}^m\frac{1}{p_j^2}\mathbb{E}\bigg|I_j(1)\frac{\partial^4g}{\partial s_j^4}(\boldsymbol{\xi})\bigg|+\sum_{j=1}^m\sum_{k\not=j}^m\frac{\sqrt{p_j}}{p_k^{3/2}}\mathbb{E}\bigg|I_k(1)\frac{\partial^4 g}{\partial s_j\partial s_k^3}(\boldsymbol{\xi})\bigg|\bigg\}.
\end{align}
To obtain the desired $O(n^{-1})$ rate for $R_1$, we need to show that the two expectations given in (\ref{r11111}) are $O(1)$.  This is somewhat involved, and is deferred until we have bounds of a similar form to (\ref{r11111}) for $R_2$ and $R_2$.  The terms $R_2$ and $R_3$ cam be bounded using a similar approach,
\begin{align*}|R_2|+|R_3| &\leq \frac{3}{2n}\sum_{j=1}^m\sum_{k=1}^m\sum_{l=1}^m\frac{1}{\sqrt{p_jp_kp_l^2}}|\mathbb{E}\tilde{I}_j(1)I_k(1)|\mathbb{E}\bigg|I_l(1)\frac{\partial^4g}{\partial s_j\partial s_k\partial s_l^2}(\boldsymbol{\xi})\bigg| \\
&= \frac{3}{2n}\bigg\{\sum_{j=1}^m\sum_{l=1}^{m}\frac{1-p_j}{p_l}\mathbb{E}\bigg|I_l(1)\frac{\partial^4g}{\partial s_j^2\partial s_l^2}(\boldsymbol{\xi})\bigg|  \\
&\quad+\sum_{j=1}^m\sum_{k\not=j}^m\sum_{l= 1}^{m}\frac{\sqrt{p_jp_k}}{p_l}\mathbb{E}\bigg|I_l(1)\frac{\partial^4g}{\partial s_j\partial s_k\partial s_l^2}(\boldsymbol{\xi})\bigg|\bigg\}.
\end{align*}
Collecting the bounds for $R_1$, $R_2$ and $R_3$ and then using that $0<p_j<1$, $j=1,\ldots,m$, to simplify the resulting bound gives 
\begin{align}|R_1|\!+\!|R_2|\!+\!|R_3|&\leq\frac{1}{6n}\bigg\{\!\sum_{j=1}^m\frac{1}{p_j^2}\mathbb{E}\bigg|I_j(1)\frac{\partial^4g}{\partial s_j^4}(\boldsymbol{\xi})\bigg| \!+\!9\sum_{j=1}^m\sum_{k=1}^m\frac{1}{p_j}\mathbb{E}\bigg|I_j(1)\frac{\partial^4g}{\partial s_j^2\partial s_k^2}(\boldsymbol{\xi})\bigg| \nonumber \\
&\quad+\sum_{j=1}^m\sum_{k\not=j}^m\frac{\sqrt{p_k}}{p_j^{3/2}}\mathbb{E}\bigg|I_j(1)\frac{\partial^4g}{\partial s_j^3\partial s_k}(\boldsymbol{\xi})\bigg|\nonumber \\
\label{re123z}&\quad+9\sum_{j=1}^m\sum_{k\not=j}^m\sum_{l=1}^m\frac{\sqrt{p_kp_l}}{p_j}\mathbb{E}\bigg|I_j(1)\frac{\partial^4g}{\partial s_j^2\partial s_k\partial s_l}(\boldsymbol{\xi})\bigg|\bigg\}.
\end{align}

To bound the expectations on the right-hand side of (\ref{re123z}), straightforward differentiation gives 
\[\frac{\partial^4g}{\partial s_j^4}(\mathbf{s})=3f''(w)+12s_j^2f^{(3)}(w)+4s_j^4f^{(4)}(w),\]
and similar expressions hold for mixed partial derivatives. 
Hence, for any $j,k,l,t$, 
\begin{equation}\label{sebdhd}\bigg|\frac{\partial^4g}{\partial s_j\partial s_k\partial s_l\partial s_t}(\mathbf{s})\bigg|\leq 3\|f''\|+3(s_j^2+s_k^2+s_l^2+s_t^2)\|f^{(3)}\|+(s_j^4+s_k^4+s_l^4+s_t^4)\|f^{(4)}\|.
\end{equation}

The following lemma is proved in Section 5.  Some of the inequalities given in this lemma are only used in the proof of Theorem \ref{pearsqrtn} but are collected here for completeness.
\begin{lemma}\label{qqqppp}For all $j,k=1,\ldots,m$, we have $\mathbb{E}|I_j(1)\xi_k|< 2p_j$, $\mathbb{E}I_j(1)\xi_k^2<4p_j$, $\mathbb{E}|I_j(1)\xi_k^3|<14p_j$, $\mathbb{E}I_j(1)\xi_k^4<27p_j$ and $\mathbb{E}I_j(1)\xi_k^6<305p_j$.
\end{lemma}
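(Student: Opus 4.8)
The plan is to reduce all five estimates to a single fourth-order fact about $S_k^{(1)}$, using the independence of $\mathbf{S}^{(1)}$ from the first-trial indicators together with the fact that a single trial produces exactly one classification. Fix $j,k\in\{1,\dots,m\}$ and $q\in\{1,2,3,4,6\}$, and recall that $\xi_k=S_k^{(1)}+\tfrac{\theta_k}{\sqrt{np_k}}I_k(1)$ with $0<\theta_k<1$. On $\{I_j(1)=0\}$ the quantity $I_j(1)|\xi_k|^q$ vanishes. On $\{I_j(1)=1\}$: if $j\neq k$ then $I_k(1)=0$, so $\xi_k=S_k^{(1)}$; and if $j=k$ then $\xi_k=S_k^{(1)}+\tfrac{\theta_k}{\sqrt{np_k}}$, whence, using $np_k\ge 1$, we get $|\xi_k|\le |S_k^{(1)}|+\tfrac{1}{\sqrt{np_k}}\le |S_k^{(1)}|+1$. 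In all cases $I_j(1)|\xi_k|^q\le I_j(1)\big(|S_k^{(1)}|+1\big)^q$ pointwise, and since $\mathbf{S}^{(1)}$ is independent of $(I_1(1),\dots,I_m(1))$ with $\mathbb{E}I_j(1)=p_j$,
\[
\mathbb{E}\big[I_j(1)|\xi_k|^q\big]\le p_j\,\mathbb{E}\big(|S_k^{(1)}|+1\big)^q .
\]
Thus it suffices to show $\mathbb{E}\big(|S_k^{(1)}|+1\big)^q<2,\,4,\,14,\,27,\,305$ for $q=1,2,3,4,6$ respectively.

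For this I would expand $\big(|S_k^{(1)}|+1\big)^q$ by the binomial theorem and insert the moment bounds of Lemma \ref{thismoment}, i.e. $\mathbb{E}|S_k^{(1)}|<1$, $\mathbb{E}(S_k^{(1)})^2<1$, $\mathbb{E}|S_k^{(1)}|^3<4^{3/4}$, $\mathbb{E}(S_k^{(1)})^4<4$ and $\mathbb{E}(S_k^{(1)})^6<42$. The cases $q=1,2,3$ are then immediate, and $q=4$ gives $4+4\cdot 4^{3/4}+6+4+1<27$. The case $q=6$ produces
\[
\mathbb{E}(S_k^{(1)})^6+6\,\mathbb{E}|S_k^{(1)}|^5+15\,\mathbb{E}(S_k^{(1)})^4+20\,\mathbb{E}|S_k^{(1)}|^3+15\,\mathbb{E}(S_k^{(1)})^2+6\,\mathbb{E}|S_k^{(1)}|+1 ,
\]
and here the estimate $\mathbb{E}|S_k^{(1)}|^5\le 42^{5/6}$ coming directly from Lemma \ref{thismoment} is too lossy (it yields roughly $316$, not something below $305$). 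Instead I would interpolate by Cauchy--Schwarz, $\mathbb{E}|S_k^{(1)}|^5=\mathbb{E}\big[(S_k^{(1)})^2|S_k^{(1)}|^3\big]\le\big(\mathbb{E}(S_k^{(1)})^4\big)^{1/2}\big(\mathbb{E}(S_k^{(1)})^6\big)^{1/2}<\sqrt{168}<13$, after which the displayed sum is at most $42+6\cdot 13+15\cdot 4+20\cdot 4^{3/4}+15+6+1<305$. All five inequalities are strict because the moment bounds in Lemma \ref{thismoment} are strict and $p_j>0$.

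The only genuine obstacle is the $q=6$ case: the fifth-moment bound of Lemma \ref{thismoment} leaves no room, and one must sharpen it by interpolating between the stated fourth and sixth moments rather than applying H\"older against the sixth moment alone. Everything else is routine bookkeeping, with the three-way decomposition over $\{I_j(1)=0\}$, $\{I_j(1)=1,\,j=k\}$ and $\{I_j(1)=1,\,j\neq k\}$, combined with the independence of $\mathbf{S}^{(1)}$ and the first-trial indicators, doing all the structural work.
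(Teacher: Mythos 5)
Your proof is correct and is structurally the same as the paper's: on $\{I_j(1)=1\}$ you pass to the pointwise bound $|\xi_k|\le|S_k^{(1)}|+1$ (trivially when $j\ne k$, since then $I_k(1)=0$ so $\xi_k=S_k^{(1)}$; when $j=k$, using $\theta_j<1$ and $np_j\ge1$), pull out $\mathbb{E}I_j(1)=p_j$ by independence of $\mathbf{S}^{(1)}$ from the first-trial indicators, and then expand $(|S_k^{(1)}|+1)^q$ against the moment bounds of Lemma~\ref{thismoment}. The one place you diverge is the $q=6$ case, and it is a genuine repair rather than a cosmetic variant: the paper substitutes $\mathbb{E}|S_j^{(1)}|^5<42^{5/6}$ and asserts $42+6\cdot 42^{5/6}+15\cdot 4+20\cdot 4^{3/4}+15+6+1<305$, but since $42^{5/6}\approx 22.5$ this sum is about $315.7$, so the paper's numerics for the sixth-moment bound do not actually close as written. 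Your Cauchy--Schwarz interpolation $\mathbb{E}|S_k^{(1)}|^5\le\big(\mathbb{E}(S_k^{(1)})^4\,\mathbb{E}(S_k^{(1)})^6\big)^{1/2}<\sqrt{168}<13$ brings the sum down to roughly $259$, which does give $\mathbb{E}I_j(1)\xi_k^6<305p_j$. So the lemma's constants are fine, but your route is the one that actually establishes them; you were right to distrust the H\"older-based fifth absolute moment bound.
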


Using inequality (\ref{sebdhd}), Lemma \ref{qqqppp} and the triangle inequality,  the expectations on the right-hand side of (\ref{re123z}) can be bounded as follows;
\begin{align}&\mathbb{E}\bigg|I_j(1)\frac{\partial^4g}{\partial s_j\partial s_k\partial s_l\partial s_t}(\boldsymbol{\xi})\bigg|\nonumber \\
&\leq 3\|f''\|\mathbb{E}I_j(1)+3\|f^{(3)}\|\Big[\mathbb{E}I_j(1)\xi_j^2+\mathbb{E}I_j(1)\xi_k^2+\mathbb{E}I_j(1)\xi_l^2+\mathbb{E}I_j(1)\xi_t^2\Big] \nonumber \\
&\quad+\|f^{(4)}\|\Big[\mathbb{E}I_j(1)\xi_j^4+\mathbb{E}I_j(1)\xi_k^4+\mathbb{E}I_j(1)\xi_l^4+\mathbb{E}I_j(1)\xi_t^4 \Big] \nonumber\\
&\leq p_j\{3\|f''\|+48\|f^{(3)}\|+108\|f^{(4)}\|\}. \label{f4ineqpsi}
\end{align}
Applying this bound gives the desired $O(n^{-1})$ rate for the remainder terms $R_1$, $R_2$, $R_3$:
\begin{align*}|R_1|+|R_2|+|R_3|&\leq \frac{1}{6n}\{3\|f''\|+48\|f^{(3)}\|+108\|f^{(4)}\|\}\bigg\{\sum_{j=1}^m\frac{1}{p_j}+9m^2\\
&\quad+\sum_{j=1}^m\sum_{k\not=j}^m\sqrt{\frac{p_k}{p_j}}+9\sum_{j=1}^m\sum_{k\not=j}^m\sum_{l=1}^m\sqrt{p_kp_l}\bigg\} \\
&\leq\frac{1}{n}\{10\|f''\|+160\|f^{(3)}\|+360\|f^{(4)}\|\}\sum_{j=1}^m\frac{1}{p_j}.
\end{align*}

Recalling that $I_j(i)I_k(i)=0$ if $j\not=k$ and \eqref{crossmeans},
\begin{align*}&N_1+N_3\\
&=\frac{1}{2\sqrt{n}}\sum_{j=1}^m\sum_{k=1}^m\frac{1}{\sqrt{p_jp_k^2}}\mathbb{E}\tilde{I}_j(1)I_k(1)\mathbb{E}\frac{\partial^3g}{\partial s_j\partial s_k^2}(\mathbf{S}^{(1)}) \\
&\quad -\frac{1}{\sqrt{n}}\sum_{j=1}^m\sum_{k=1}^m\sum_{l=1}^m\frac{\mathbb{E}\tilde{I}_j(1)I_k(1)}{\sqrt{p_jp_kp_l}}\cdot p_l\mathbb{E}\frac{\partial^3g}{\partial s_j\partial s_k\partial s_l}(\mathbf{S}^{(1)})\\
&=\frac{1}{2\sqrt{n}}\bigg\{\sum_{j=1}^m\frac{1}{\sqrt{p_j}}(1-p_j)(1-2p_j)\mathbb{E}\frac{\partial^3g}{\partial s_j^3}(\mathbf{S}^{(1)})-\sum_{j=1}^m\sum_{k\not=j}^m\sqrt{p_j}\mathbb{E}\frac{\partial^3g}{\partial s_j\partial s_k^2}(\mathbf{S}^{(1)})\\
&\quad-2\sum_{j=1}^m\sum_{l=1}^m(1-p_j)\sqrt{p_l}\mathbb{E}\frac{\partial^3g}{\partial s_j^2\partial s_l}(\mathbf{S}^{(1)})\\
&\quad+2\sum_{j=1}^m\sum_{k\not=j}^m\sum_{l=1}^m\sqrt{p_jp_kp_l}\mathbb{E}\frac{\partial^3g}{\partial s_j\partial s_k\partial s_l}(\mathbf{S}^{(1)})\bigg\}.
\end{align*}
By Taylor expanding in the usual manner, 
\[N_1+N_3=N_4+R_4,\]
where
\begin{align*}N_4&=\frac{1}{2\sqrt{n}}\bigg\{\sum_{j=1}^m\frac{1}{\sqrt{p_j}}(1-p_j)(1-2p_j)\mathbb{E}\frac{\partial^3g}{\partial s_j^3}(\mathbf{S})-\sum_{j=1}^m\sum_{k\not=j}^m\sqrt{p_j}\mathbb{E}\frac{\partial^3g}{\partial s_j\partial s_k^2}(\mathbf{S})\\
&\quad-\!2\sum_{j=1}^m\sum_{l=1}^m\!(1-p_j)\sqrt{p_l}\mathbb{E}\frac{\partial^3g}{\partial s_j^2\partial s_l}(\mathbf{S})\!+\!2\sum_{j=1}^m\sum_{k\not=j}^m\sum_{l=1}^m\!\sqrt{p_jp_kp_l}\mathbb{E}\frac{\partial^3g}{\partial s_j\partial s_k\partial s_l}(\mathbf{S})\bigg\},\\
|R_4|&\leq  \frac{1}{2n}\bigg\{\!\sum_{j=1}^m\sum_{k=1}^m\!\frac{1}{\sqrt{p_j}}\mathbb{E}\bigg|I_k(1)\frac{\partial^4g}{\partial s_j^3\partial s_k}(\boldsymbol{\xi})\bigg|\!+\!\sum_{j=1}^m\sum_{k\not=j}^m\sum_{l=1}^m\!\sqrt{p_j}\mathbb{E}\bigg|I_l(1)\frac{\partial^4g}{\partial s_j\partial s_k^2\partial s_l}(\boldsymbol{\xi})\bigg|\\
&\quad+2\sum_{j=1}^m\sum_{l=1}^m\sum_{t=1}^m\sqrt{p_l}\mathbb{E}\bigg|I_t(1)\frac{\partial^4g}{\partial s_j^2\partial s_l\partial s_t}(\boldsymbol{\xi})\bigg|\\
&\quad+2\sum_{j=1}^m\sum_{k\not=j}^m\sum_{l=1}^m\sum_{t=1}^m\sqrt{p_jp_kp_l}\mathbb{E}\bigg|I_t(1)\frac{\partial^4g}{\partial s_j\partial s_k\partial s_l\partial s_t}(\boldsymbol{\xi})\bigg|\bigg\}.
\end{align*}

We can bound $R_4$ by applying inequality (\ref{f4ineqpsi}):
\begin{align*}|R_4|&\leq\frac{1}{2n}\{3\|f''\|+48\|f^{(3)}\|+108\|f^{(4)}\|\}\bigg\{\sum_{j=1}^m\sum_{k=1}^m\frac{p_k}{\sqrt{p_j}}+\sum_{j=1}^m\sum_{k\not=j}^m\sum_{l=1}^m\sqrt{p_j}p_l\\
&\quad+2\sum_{j=1}^m\sum_{l=1}^m\sum_{t=1}^m\sqrt{p_l}p_t+2\sum_{j=1}^m\sum_{k\not=j}^m\sum_{l=1}^m\sum_{t=1}^m\sqrt{p_jp_kp_l}p_t\bigg\}\\
&\leq\frac{1}{n}\{9\|f''\|+144\|f^{(3)}\|+324\|f^{(4)}\|\}\sum_{j=1}^m\frac{1}{\sqrt{p_j}}.
\end{align*}

For $N_4$, by writing partial derivatives of $g$ in terms of derivatives of $f$, 
\begin{align*}N_4&=
\frac{1}{2\sqrt{n}}\bigg\{\sum_{j=1}^m\frac{1}{\sqrt{p_j}}(1-p_j)(1-2p_j)\{3\mathbb{E}S_jf''(W)+2\mathbb{E}S_j^3f^{(3)}(W)\}\\
&\quad-\sum_{j=1}^m\sum_{k\not=j}^m\sqrt{p_j}\{\mathbb{E}S_jf''(W)+2\mathbb{E}S_jS_k^2f^{(3)}(W)\}\\
&\quad-2\sum_{j=1}^m\sum_{l=1}^m(1-p_j)\sqrt{p_l}\{\mathbb{E}S_lf''(W)+2\mathbb{E}S_j^2S_lf^{(3)}(W)\}\\
&\quad+4\sum_{j=1}^m\sum_{k\not=j}^m\sum_{l=1}^m\sqrt{p_jp_kp_l}\mathbb{E}S_jS_kS_lf^{(3)}(W)\bigg\}.
\end{align*}
Since $\sum_{j=1}^m\sqrt{p_j}S_j=0$, the final two sums in the above display  equal  $0$.  Furthermore, $\sum_{j=1}^m\sum_{k\not=j}^m\sqrt{p_j}\mathbb{E}S_jS_k^2f^{(3)}(W)=-\sum_{j=1}^m\sqrt{p_j}\mathbb{E}S_j^3f^{(3)}(W)$ and  $\sum_{j=1}^m\sum_{k\not=j}^m\sqrt{p_j}\mathbb{E}S_jf''(W)=0$.  Hence,
\begin{align}|N_4|&=\frac{1}{2\sqrt{n}}\bigg|\sum_{j=1}^m\bigg\{\frac{3}{\sqrt{p_j}}(1-p_j)(1-2p_j)\mathbb{E}S_jf''(W)\nonumber\\
&\quad+2\bigg[\frac{1}{\sqrt{p_j}}(1-p_j)(1-2p_j)+\sqrt{p_j}\bigg]\mathbb{E}S_j^3f^{(3)}(W)\bigg\}\bigg|\nonumber\\
\label{sekcnp}&\leq\frac{1}{\sqrt{n}}\sum_{j=1}^m\frac{1}{\sqrt{p_j}}\bigg\{\frac{3}{2}|\mathbb{E}S_jf''(W)|+|\mathbb{E}S_j^3f^{(3)}(W)|\bigg\}.
\end{align}  

\medskip
\noindent{\bf{Proof Part II: Symmetry argument for optimal rate}}

\medskip 
To complete the proof, we show that the expectations $\mathbb{E}S_jf''(W)$ and $\mathbb{E}S_j^3f^{(3)}(W)$ are of order $n^{-1/2}$, and we do so by applying symmetry arguments similar to those used in the proof of Theorem \ref{chiiid}.  We use a form of multivariate normal approximation, and base our approximation on the $\mathrm{MVN}(\mathbf{0},\Sigma_{\mathbf{S}})$ Stein equation
\begin{equation}\label{testmvn2}\sum_{a=1}^m\sum_{b=1}^m\sigma_{ab}\frac{\partial^2\psi_{i}}{\partial s_a\partial s_b}(\mathbf{s})-\sum_{a=1}^ms_a\frac{\partial \psi_{i}}{\partial s_a}(\mathbf{s})=h_i(\mathbf{s})-\mathbb{E}h_i(\mathbf{Z}),
\end{equation}
where $h_1(\mathbf{s})=s_jf''(\sum_{k=1}^ms_k^2)$, $h_2(\mathbf{s})=s_j^3f^{(3)}(\sum_{k=1}^ms_k^2)$ and $\mathbf{Z}\sim \mathrm{MVN}(\mathbf{0},\Sigma_{\mathbf{S}})$.  Due to the symmetry in the test functions $h_1$ and $h_2$,  so that as $h_i(\mathbf{s})=-h_i(-\mathbf{s})$ for $i=1,2$, if $\mathbf{Z}\sim \mathrm{MVN}(\mathbf{0},\Sigma_{\mathbf{S}})$, then $\mathbb{E}h_1(\mathbf{Z})=\mathbb{E}h_2(\mathbf{Z})=0$.
Evaluating both sides of (\ref{testmvn2}) at $\mathbf{S}$ and taking expectations  gives
\begin{equation*}\mathbb{E}h_i(\mathbf{S}) =\mathbb{E}\bigg[\sum_{a=1}^m\sum_{b=1}^m\sigma_{ab}\frac{\partial^2\psi_{i}}{\partial s_a\partial s_b}(\mathbf{S})-\sum_{a=1}^mS_a\frac{\partial \psi_{i}}{\partial s_a}(\mathbf{S})\bigg].
\end{equation*}
Now Taylor expand in a similar manner to before to obtain
\[|\mathbb{E}h_i(\mathbf{S})|=|R_{5,i}+R_{6,i}|\leq |R_{5,i}|+|R_{6,i}|,\]
where
\begin{eqnarray*}R_{5,i}&=&\frac{1}{2\sqrt{n}}\sum_{a=1}^m\sum_{b=1}^m\sum_{c=1}^m\frac{1}{\sqrt{p_ap_bp_c}}\mathbb{E}\tilde{I}_a(1)I_b(1)I_c(1)\frac{\partial^3\psi_{i}}{\partial s_a\partial s_b\partial s_c}(\boldsymbol{\xi}), \\
R_{6,i}&=& -\frac{1}{\sqrt{n}}\sum_{a=1}^m\sum_{b=1}^m\sum_{c=1}^m\frac{1}{\sqrt{p_ap_bp_c}}\mathbb{E}\tilde{I}_a(1)I_b(1)\mathbb{E}I_c(1)\frac{\partial^3\psi_{i}}{\partial s_a\partial s_b\partial s_c}(\boldsymbol{\xi}).
\end{eqnarray*}
Bounding $R_{5,i}$ and $R_{6,i}$ is now almost routine:
\begin{align*}|R_{5,i}|&\leq\frac{1}{2\sqrt{n}}\bigg\{\!\sum_{a=1}^m\frac{1-p_a}{p_a^{3/2}} \mathbb{E}\bigg|I_a(1)\frac{\partial^3\psi_{i}}{\partial s_a^3}(\boldsymbol{\xi})\bigg|\!+\!\sum_{a=1}^m\sum_{b\not=a}^{m}\frac{1}{\sqrt{p_a}p_b} p_b\mathbb{E}\bigg|I_b(1)\frac{\partial^3\psi_{i}}{\partial s_a\partial s_b^2}(\boldsymbol{\xi})\bigg| \bigg\} \\
&\leq\frac{1}{2\sqrt{n}}\bigg\{\!\sum_{a=1}^m\frac{1}{p_a^{3/2}}\mathbb{E}\bigg|I_a(1)\frac{\partial^3\psi_{i}}{\partial s_a^3}(\boldsymbol{\xi})\bigg|\!+\!\sum_{a=1}^m\sum_{b\not=a}^{m}\frac{1}{\sqrt{p_a}}\mathbb{E}\bigg|I_b(1)\frac{\partial^3\psi_{i}}{\partial s_a\partial s_b^2}(\boldsymbol{\xi})\bigg| \bigg\}
\end{align*}
and
\begin{align*}|R_{6,i}|&\leq\frac{1}{\sqrt{n}}\bigg\{\sum_{a=1}^m\sum_{c=1}^m\frac{1}{\sqrt{p_c}}\mathbb{E}\bigg|I_c(1)\frac{\partial^3\psi_{i}}{\partial s_a^2\partial s_c}(\boldsymbol{\xi})\bigg| \\
&\quad+\sum_{a=1}^m\sum_{b\not=a}^{m}\sum_{c=1}^m\sqrt{\frac{p_ap_b}{p_c}}\mathbb{E}\bigg|I_c(1)\frac{\partial^3\psi_{i}}{\partial s_a\partial s_b\partial s_c}(\boldsymbol{\xi})\bigg|\bigg\}.
\end{align*}
Combining these bounds and then using that $p_j<1$ gives
\begin{align}|\mathbb{E}h_i(\mathbf{S})|&\leq\frac{1}{2\sqrt{n}}\bigg\{\sum_{a=1}^m\frac{1}{p_a^{3/2}}\mathbb{E}\bigg|I_a(1)\frac{\partial^3\psi_{i}}{\partial s_a^3}(\boldsymbol{\xi})\bigg|\!+\!3\sum_{a=1}^m\sum_{b=1}^{m}\!\frac{1}{\sqrt{p_b}}\mathbb{E}\bigg|I_a(1)\frac{\partial^3\psi_{i}}{\partial s_a^2\partial s_b}(\boldsymbol{\xi})\bigg| \nonumber \\
\label{hwssml}&\quad+2\sum_{a=1}^m\sum_{b\not=a}^{m}\sum_{c=1}^m\sqrt{\frac{p_bp_c}{p_a}}\mathbb{E}\bigg|I_a(1)\frac{\partial^3\psi_{i}}{\partial s_a\partial s_b\partial s_c}(\boldsymbol{\xi})\bigg|\bigg\}.
\end{align}

It now just remains to bound the expectations involving the derivatives $\psi_{i}$.  The bounds in the following lemma are proved in Section 5.  
\begin{lemma}\label{4psi}The third order partial derivatives of the solutions $\psi_1(\mathbf{s})$ and $\psi_2(\mathbf{s})$ of the Stein equation (\ref{testmvn2}) satisfy the following bounds
\begin{eqnarray}\bigg|\frac{\partial^3\psi_{1}}{\partial s_a\partial s_b\partial s_c}(\mathbf{s})\bigg|&\leq&\frac{\|f^{(3)}\|}{2}+\frac{4}{5}\|f^{(4)}\|\Big[8+3(s_a^2+s_b^2+s_c^2+s_j^2)\Big]\nonumber\\
\label{cmlsp1}&&+\frac{16}{35}\|f^{(5)}\|\Big[32+5(s_a^4+s_b^4+s_c^4+s_j^4)\Big], \\ 
\bigg|\frac{\partial^3\psi_{2}}{\partial s_a\partial s_b\partial s_c}(\mathbf{s})\bigg|&\leq& \frac{\|f^{(3)}\|}{2}+\frac{12}{5}\|f^{(4)}\|\Big[4+s_a^2+s_b^2+s_c^2+3s_j^2\Big]\nonumber\\
&&+\frac{8}{35}\|f^{(5)}\|\Big[384+5(7s_a^4+7s_b^4+7s_c^4+27s_j^4)\Big]\nonumber\\
&&+
\label{cmlsp2}\|f^{(6)}\|\bigg[\frac{4096}{21}+\frac{128}{27}(s_a^6+s_b^6+s_c^6+3s_j^6)\bigg].
\end{eqnarray}
\end{lemma}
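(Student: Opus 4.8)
The plan is to solve the multivariate normal Stein equation \eqref{testmvn2} explicitly through the generator of the associated Ornstein--Uhlenbeck semigroup and then differentiate the resulting formula three times, in the same spirit as the one-dimensional computation behind Lemma \ref{normaltech}. Writing $\mathbf{Z} \sim \mathrm{MVN}(\mathbf{0}, \Sigma_{\mathbf{S}})$ --- which is well-defined even though $\Sigma_{\mathbf{S}}$ is degenerate of rank $m-1$ --- a solution of \eqref{testmvn2} for the test function $h_i$ is
\[ \psi_i(\mathbf{s}) = -\int_0^\infty \Big( \mathbb{E}\big[ h_i\big( \mathrm{e}^{-t}\mathbf{s} + \sqrt{1-\mathrm{e}^{-2t}}\,\mathbf{Z} \big) \big] - \mathbb{E}h_i(\mathbf{Z}) \Big) \, \mathrm{d}t , \]
and recall that $\mathbb{E}h_1(\mathbf{Z}) = \mathbb{E}h_2(\mathbf{Z}) = 0$ by the oddness of $h_1, h_2$. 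Since $h \in C_b^5(\mathbb{R}^+)$, repeated use of Theorem \ref{gamthm33} (in the form \eqref{chibound}) shows that $f'', f^{(3)}, \dots, f^{(6)}$ are bounded, so $h_1$ and $h_2$ are smooth with at most polynomially growing derivatives; this, together with the $O(\mathrm{e}^{-t})$ decay of the integrand, justifies differentiating three times under the integral sign to give
\[ \frac{\partial^3 \psi_i}{\partial s_a \partial s_b \partial s_c}(\mathbf{s}) = -\int_0^\infty \mathrm{e}^{-3t}\, \mathbb{E}\!\left[ \frac{\partial^3 h_i}{\partial s_a \partial s_b \partial s_c}\big( \mathrm{e}^{-t}\mathbf{s} + \sqrt{1-\mathrm{e}^{-2t}}\,\mathbf{Z} \big) \right] \mathrm{d}t . \]

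The next step is to compute the integrand. Applying the product and chain rules to $h_1(\mathbf{s}) = s_j f''(w)$ with $w = \sum_k s_k^2$ expresses $\partial^3 h_1 / \partial s_a \partial s_b \partial s_c$ as a finite sum of terms of the form (product of Kronecker deltas among $\{a,b,c,j\}$) $\times$ (monomial in $s_a, s_b, s_c, s_j$ of degree $\le 4$) $\times f^{(\ell)}(w)$ with $\ell \in \{3,4,5\}$; likewise for $h_2(\mathbf{s}) = s_j^3 f^{(3)}(w)$, where the prefactor $s_j^3$ produces additional terms through Leibniz's rule, the maximal order rising to $\ell = 6$ and the monomial degree to $6$. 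This is why only $\|f^{(3)}\|, \dots, \|f^{(5)}\|$ (respectively $\|f^{(6)}\|$) enter \eqref{cmlsp1} (respectively \eqref{cmlsp2}). Substituting $\mathbf{s} \mapsto \mathrm{e}^{-t}\mathbf{s} + \sqrt{1-\mathrm{e}^{-2t}}\,\mathbf{Z}$, applying the triangle inequality with the bounds $\|f^{(\ell)}\|$, and taking the expectation over $\mathbf{Z}$ reduces each term to a constant multiple of a monomial of degree $\le 6$ in $s_a, s_b, s_c, s_j$: the required central moments of the Gaussian coordinates are bounded using $\sigma_{aa} = 1 - p_a < 1$ and $|\sigma_{ab}| = \sqrt{p_a p_b} < 1$, and all odd moments vanish. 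What remains are the elementary integrals $\int_0^\infty \mathrm{e}^{-kt}(1 - \mathrm{e}^{-2t})^\ell\, \mathrm{d}t$, whose values are rational, and which account for the coefficients $\tfrac12, \tfrac45, \tfrac{16}{35}, \tfrac{12}{5}, \tfrac{8}{35}, \tfrac{128}{27}, \dots$ appearing in the lemma. A final tidying step uses $|s| \le \tfrac12(s^2 + 1)$, $s^2 \le \tfrac12(s^4 + 1)$ and $|s_a s_b s_c s_j| \le \tfrac14(s_a^4 + s_b^4 + s_c^4 + s_j^4)$, together with their lower-degree analogues, to collapse the surviving mixed monomials into the symmetric expressions $s_a^2 + s_b^2 + s_c^2 + s_j^2$ and $s_a^4 + s_b^4 + s_c^4 + s_j^4$ displayed in \eqref{cmlsp1}--\eqref{cmlsp2}, at the cost of somewhat non-optimal numerical constants.

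The main obstacle is the sheer length and bookkeeping of the computation for $\psi_2$: triple differentiation of $s_j^3 f^{(3)}(w)$ generates a great many terms, since the $s_j^3$ prefactor interacts with the factors $2 s_k$ arising from $\partial_k w$, and every numerical constant must be tracked through the Gaussian-moment step and the $t$-integration while still arriving at a compact bound. There is no conceptual difficulty beyond what is already present in Lemma \ref{normaltech}; accordingly, in Section 5 I would present the $\psi_1$ computation in enough detail to exhibit the mechanism and then treat $\psi_2$ by the same route, organising both by writing $\partial^3 h_i / \partial s_a \partial s_b \partial s_c$ in the common schematic form $\sum_\ell \big( \text{deltas} \times \text{monomial} \big) f^{(\ell)}(w)$ and bounding once.
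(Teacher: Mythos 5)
Your proposal follows essentially the same route as the paper: both use the Ornstein--Uhlenbeck semigroup representation $\psi_i(\mathbf{s})=-\int_0^\infty \mathbb{E}[h_i(\mathrm{e}^{-u}\mathbf{s}+\sqrt{1-\mathrm{e}^{-2u}}\mathbf{Z})]\,\mathrm{d}u$ (citing Meckes), differentiate three times under the integral, compute $\partial^3 h_i/\partial s_a\partial s_b\partial s_c$ explicitly, symmetrise the resulting monomials via AM--GM-type inequalities such as $\prod|a_k|\le\frac{1}{n}\sum|a_k|^n$ and $|a_1+a_2|^r\le 2^{r-1}(|a_1|^r+|a_2|^r)$, and then integrate using the Gaussian moment bounds $\mathbb{E}Z_j^{2\ell}\le(2\ell-1)!!$ and the elementary integrals $\int_0^\infty\mathrm{e}^{-3u}(1-\mathrm{e}^{-2u})^\ell\,\mathrm{d}u$. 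This is the paper's proof verbatim in outline, so nothing further to compare.
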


On applying Lemmas \ref{qqqppp} and \ref{4psi}, we now have
\begin{align*}&\mathbb{E}\bigg|I_a(1)\frac{\partial^3\psi_{1}}{\partial s_a\partial s_b\partial s_c}(\boldsymbol{\xi})\bigg|\\
&\leq \frac{\|f^{(3)}\|}{2}\mathbb{E}I_a(1)+\frac{4}{5}\|f^{(4)}\|\Big[8\mathbb{E}I_a(1)+3\Big(\mathbb{E}I_a\xi_a^2+\mathbb{E}I_a\xi_b^2+\mathbb{E}I_a\xi_c^2+\mathbb{E}I_a\xi_j^2\Big)\Big]  \\
&\quad+\frac{16}{35}\|f^{(5)}\|\Big[32\mathbb{E}I_a(1)+5\Big(\mathbb{E}I_a\xi_a^4+\mathbb{E}I_a\xi_b^4+\mathbb{E}I_a\xi_c^4+\mathbb{E}I_a\xi_j^4\Big)\Big]  \\
&\leq p_a\bigg[\frac{\|f^{(3)}\|}{2}+\frac{4}{5}(8+3\cdot4\cdot4)\|f^{(4)}\|+\frac{16}{35}(32+5\cdot4\cdot27)\|f^{(5)}\|\bigg] \\
&= p_a\bigg[\frac{\|f^{(3)}\|}{2}+\frac{224}{5}\|f^{(4)}\|+\frac{9152}{35}\|f^{(5)}\|\bigg]
\end{align*}
and
\begin{align*}&\mathbb{E}\bigg|I_a(1)\frac{\partial^3\psi_{2}}{\partial s_a\partial s_b\partial s_c}(\boldsymbol{\xi})\bigg|\\
&\leq p_a\bigg[\frac{\|f^{(3)}\|}{2}+\frac{12}{5}(4+6\cdot3)\|f^{(4)}\|+\frac{8}{35}(384+5\cdot6\cdot27)\|f^{(5)}\|\\
&\quad+\|f^{(6)}\|\bigg(\frac{4096}{21}+\frac{128}{7}\cdot6\cdot305\bigg)\bigg]\\
&= p_a\bigg[\frac{\|f^{(3)}\|}{2}+\frac{336}{5}\|f^{(4)}\|+\frac{9552}{35}\|f^{(5)}\|+\frac{706816}{21}\|f^{(6)}\|\bigg].
\end{align*}
Substituting into (\ref{hwssml}), 
\begin{align*}|\mathbb{E}h_1(\mathbf{S})|&\leq\frac{1}{2\sqrt{n}}\bigg[\frac{\|f^{(3)}\|}{2}+\frac{224}{5}\|f^{(4)}\|+\frac{9152}{35}\|f^{(5)}\|\bigg]\bigg\{\sum_{a=1}^m\frac{1}{\sqrt{p_a}}\\
&\quad+3\sum_{a=1}^m\sum_{b=1}^{m}\frac{p_a}{\sqrt{p_b}}+2\sum_{a=1}^m\sum_{b\not=a}^{m}\sum_{c=1}^m\sqrt{p_ap_bp_c}\bigg\}\\
&\leq\frac{1}{\sqrt{n}}\{2\|f^{(3)}\|+135\|f^{(4)}\|+785\|f^{(5)}\|\}\sum_{a=1}^m\frac{1}{\sqrt{p_a}}
\end{align*}
and, by a similar calculation,
\begin{equation*}|\mathbb{E}h_2(\mathbf{S})|\leq\frac{1}{\sqrt{n}}\{2\|f^{(3)}\|+202\|f^{(4)}\|+819\|f^{(5)}\|+100974\|f^{(6)}\|\}\sum_{a=1}^m\frac{1}{\sqrt{p_a}},
\end{equation*}
where we rounded the constants up to the nearest integer.  Finally, we substitute these inequalities into (\ref{sekcnp}) to bound $N_4$:
\begin{align*}|N_4|&\leq\frac{1}{n}\bigg\{\bigg(\frac{3}{2}\cdot2+2\bigg)\|f^{(3)}\|+\bigg(\frac{3}{2}\cdot135+202\bigg)\|f^{(4)}\|\\
&\quad+\bigg(\frac{3}{2}\cdot785+819\bigg)\|f^{(5)}\|+100974\|f^{(6)}\|\bigg\}\bigg(\sum_{j=1}^m\frac{1}{\sqrt{p_j}}\bigg)^2 \\
&\leq\frac{1}{n}\{5\|f^{(3)}\|+405\|f^{(4)}\|+1997\|f^{(5)}\|+100974\|f^{(6)}\|\}\bigg(\sum_{j=1}^m\frac{1}{\sqrt{p_j}}\bigg)^2.
\end{align*}

In conclusion, 
\[|\mathbb{E}h(W)-\chi_{(m-1)}^2h|\leq |R_1|+|R_2|+|R_3|+|R_4|+|N_4|.\]
To arrive at (\ref{pearbound}), we sum up these remainders and use the inequality $\sum_{j=1}^m\frac{1}{p_j}\leq(\sum_{j=1}^mp_j^{-1/2})^2$ to simplify the bound.  Finally, we use (\ref{chibound}) to translate bounds on the derivatives of the solution $f$ to bounds on the derivatives of the test function $h$, which completes the proof of Theorem \ref{Pearson_thm_non_int}.  \hfill $\square$

\section{Further proofs} \label{appendix}

\noindent\emph{Proof of Lemma \ref{normaltech}.} From Lemma I.4 in  \cite{stein2}, $\psi$ exists and 
\[
\psi(w) = \mathrm{e}^{\frac12 w^2} \int_w^\infty g^{(3)}(s) \mathrm{e}^{-\frac12 s^2} \, \mathrm{d}s = - \mathrm{e}^{\frac12 w^2} \int_{-\infty}^w  g^{(3)}(s) \mathrm{e}^{-\frac12 s^2} \, \mathrm{d}s.
\]
From  $g^{(3)}(s) = 3s f''(s^2) + 2 s^3 f^{(3)}(s^2)$   we get that 
\bea \label{gbound2}
|  g^{(3)}(x) |  \le  3 |x| \| f'' \| + 2 |x|^3 \| f^{(3)}\|. 
\ena
Hence for $ w>0$ 
\begin{align*}
| \psi(w) | &\leq  3 \|f''\|  \mathrm{e}^{\frac12 w^2} \int_w^\infty  s \mathrm{e}^{-\frac12 s^2} \, \mathrm{d}s  + 2 \| f^{(3)}\| \mathrm{e}^{\frac12 w^2} \int_w^\infty  s^3 \mathrm{e}^{-\frac12 s^2} \, \mathrm{d}s  \\
&=3 \|f'' \| +  2 (w^2+2)  \| f^{(3)}\|   .
\end{align*}
Similarly, 
for $w<0$ 
\begin{align*}
| \psi(w) | &\le 3 \|f''\|  \mathrm{e}^{\frac12 w^2} \int_{-\infty}^w  (-s) \mathrm{e}^{-\frac12 s^2} \, \mathrm{d}s  + 2 \| f^{(3)}\| \mathrm{e}^{\frac12 w^2} \int_{-\infty}^w (- s)^3 \mathrm{e}^{-\frac12 s^2} \, \mathrm{d}s  \\
&=  3 \|f'' \| +  2 (w^2+2)  \| f^{(3)}\|  ,
\end{align*} 
proving the first bound. The second bound uses \eqref{Stein-normal} to obtain 
$$ x \psi'(x) = x^2 \psi(x) + x g^{(3)}(x) $$
and the bound then follows directly with \eqref{bound1} and \eqref{gbound2}. For the last bound differentiate \eqref{Stein-normal} to get
$$ \psi''(x) = x \psi ' (x) + \psi (x) + g^{(4)}(x)$$
and combine \eqref{bound1}, \eqref{bound2} and \eqref{gbound}.  \hfill $\square$

\vspace{3mm}

\noindent\emph{Proof of Lemma \ref{operatorcomp}.} The derivatives of $g$ are
\begin{eqnarray*}
\frac{\partial g}{\partial s_j} (\mathbf{s}) &=&  \frac{1}{2}s_jf'(w), \\
\frac{\partial^2g}{\partial s_j^2} (\mathbf{s}) &=&  \frac{1}{2}f'(w)+s_j^2f''(w), \\
\frac{\partial^2g}{\partial s_j \partial s_k} (\mathbf{s}) &=& s_js_kf''(w), \qquad  j \neq k. 
\end{eqnarray*}
Note that we can write the multivariate normal Stein equation (\ref{mvn1}) as 
$$
\mathcal{A}_{\mathrm{MVN}(\mathbf{0},\Sigma_\mathbf{S})}   g(\mathbf{s}) = \sum_{j=1}^m \sigma_{jj} \frac{\partial^2g}{\partial s_j^2} (\mathbf{s}) + \sum_{j=1}^m \sum_{k \neq j}^m \sigma_{jk} \frac{\partial^2g}{\partial s_j \partial s_k} (\mathbf{s}) - \sum_{j=1}^m s_j \frac{\partial g}{\partial s_j} (\mathbf{s}). $$
Now, as $\sum_{j=1}^mp_j=1$,
\begin{align*}
\sum_{j=1}^m\sigma_{jj} \frac{\partial^2g}{\partial s_j^2} (\mathbf{s}) &= \frac{1}{2}f'(w)\sum_{j=1}^m(1-p_j)+f''(w)\sum_{j=1}^m(1-p_j)s_j^2\\
&=\frac{m-1}{2}f'(w)+wf''(w)-f''(w)\sum_{j=1}^mp_js_j^2. \end{align*}
Similarly, since $\sum_{j=1}^m \sum_{k \neq j}^m \sqrt{p_jp_k}s_j s_k = -\sum_{j=1}^mp_j s_j^2$ (recall $\sum_{j=1}^m \sqrt{p_j}s_j = 0$),
\begin{equation*}
\sum_{j=1}^m \sum_{k \neq j}^m\sigma_{jk} \frac{\partial^2g}{\partial s_j \partial y_k} (\mathbf{s}) = -f''(w)\sum_{j=1}^m \sum_{k \neq j}^m\sqrt{p_jp_k}s_js_k=f''(w)\sum_{j=1}^mp_js_j^2. \end{equation*}
Lastly,
\begin{equation*}
\sum_{j=1}^m s_j \frac{\partial g}{\partial s_j} (\mathbf{s}) = \frac{1}{2}f'(w)\sum_{j=1}^ms_j^2=\frac{1}{2}wf'(w).
\end{equation*} 
Putting all the above together gives that 
\[\mathcal{A}_{\mathrm{MVN}(\mathbf{0},\Sigma_{\mathbf{S}})}g(\mathbf{s})=wf''(w)+\frac{1}{2}(m-1-w)f'(w)=  \mathcal{A}_{m-1} f(w),
\] 
as required. \hfill $\square$

\vspace{3mm}

\noindent\emph{Proof of Lemma \ref{qqqppp}.} Suppose firstly that $j=k$.  As $S_j^{(1)}$ and $I_j(1)$ are independent, we have
\begin{align*}\mathbb{E}I_j(1)\xi_j^2&=\mathbb{E}I_j(1)\bigg(S_j^{(1)}+\frac{\theta_{j}}{\sqrt{np_j}}I_j(1)\bigg)^2 \\
&\leq \mathbb{E}I_j(1)\Big\{\mathbb{E}(S_j^{(1)})^2+2\mathbb{E}|S_j^{(1)}|+1\Big\}<4p_j,
\end{align*}
where we used that $0<\theta_j<1$ and $np_j\geq 1$ to obtain the first inequality.  Similarly,
\begin{align*}\mathbb{E}I_j(1)\xi_j^4&\leq p_j\Big[\mathbb{E}(S_j^{(1)})^4+4\mathbb{E}|S_j^{(1)}|^3+6\mathbb{E}(S_j^{(1)})^2+4\mathbb{E}|S_j^{(1)}|+1\Big]\\
&<p_j(4+4\cdot 4^{3/4}+6+4+1)<27p_j,
\end{align*}
and
\begin{align*}\mathbb{E}I_j(1)\xi_j^6&\leq p_j\Big[\mathbb{E}(S_j^{(1)})^6+6\mathbb{E}|S_j^{(1)}|^5+15\mathbb{E}(S_j^{(1)})^4+20\mathbb{E}|S_j^{(1)}|^3\\
&\quad+15\mathbb{E}(S_j^{(1)})^2+6\mathbb{E}|S_j^{(1)}|+1\Big]\\
&<p_j(42+6\cdot42^{5/6}+15\cdot4+20\cdot 4^{3/4}+15+6+1)<305p_j,
\end{align*}
where we used Lemma \ref{thismoment} to bound the absolute moments of $S_j^{(1)}$.  Also,
\begin{equation*}\mathbb{E}|I_j(1)\xi_j|\leq p_j(\mathbb{E}|S_j^{(1)}|+1)<2p_j,
\end{equation*}
and 
\begin{equation*}\mathbb{E}|I_j(1)\xi_j^3|\leq p_j\Big[\mathbb{E}|S_j^{(1)}|^3+3\mathbb{E}(S_j^{(1)})^2+3\mathbb{E}|S_j^{(1)}|+1\Big]\!<p_j(4^{3/4}+3+3+1)\!<14p_j.
\end{equation*}
When $j\not=k$, it is clear that the same bounds still hold, as $I_j(1)I_k(1)=0$. \hfill $\square$

\vspace{3mm}

\noindent\emph{Proof of Lemma \ref{4psi}.} Since the covariance-matrix $\Sigma_{\mathbf{S}}$ is non-negative definite, the solution of the Stein equation (\ref{testmvn2}) is well-defined and is given by (see \cite{meckes}):
\begin{equation*}\psi_i(\mathbf{s})=-\int_0^{\infty}\mathbb{E}[h_i(\mathrm{e}^{-u}\mathbf{s}+\sqrt{1-\mathrm{e}^{-2u}}\mathbf{Z})]\,\mathrm{d}u.
\end{equation*}
By dominated convergence, 
\begin{equation*}\frac{\partial^3\psi_i}{\partial s_a\partial s_b\partial s_c}(\mathbf{s})=-\int_0^{\infty}\mathrm{e}^{-3u}\mathbb{E}\bigg[\frac{\partial^3h_i}{\partial s_a\partial s_b\partial s_c}(\mathrm{e}^{-u}\mathbf{s}+\sqrt{1-\mathrm{e}^{-2u}}\mathbf{Z})\bigg]\,\mathrm{d}u,
\end{equation*}
and so
\begin{equation*}\bigg|\frac{\partial^3\psi_i}{\partial s_a\partial s_b\partial s_c}(\mathbf{s})\bigg|\leq\int_0^{\infty}\mathrm{e}^{-3u}\mathbb{E}\bigg|\frac{\partial^3h_i}{\partial s_a\partial s_b\partial s_c}(\mathrm{e}^{-u}\mathbf{s}+\sqrt{1-\mathrm{e}^{-2u}}\mathbf{Z})\bigg|\,\mathrm{d}u.
\end{equation*}

We now obtain bounds for the third order partial derivatives of $h_1$ and $h_2$.  By straightforward differentiation,
\begin{align*}\frac{\partial^3h_1}{\partial s_a\partial s_b\partial s_c}(\mathbf{s})&=2[\delta_{ja}+\delta_{jb}+\delta_{jc}]f^{(3)}(w)+4[s_j(s_a+s_b+s_c)\\
&\quad+s_bs_c\delta_{ja}+s_as_c\delta_{jb}+s_as_b\delta_{jc}]f^{(4)}(w)+8s_js_as_bs_cf^{(5)}(w), \\
\frac{\partial^3h_2}{\partial s_a\partial s_b\partial s_c}(\mathbf{s})&=6\delta_{ja}\delta_{jb}\delta_{jc}f^{(3)}(w)\!+\!12s_j[s_a\delta_{jb}\delta_{jc}+s_b\delta_{ja}\delta_{jc}+s_c\delta_{ja}\delta_{jb}]f^{(4)}(w)\\
&\quad+\!4[s_j^3(s_a+s_b+s_c)\!+\!3s_j^2(s_bs_c\delta_{ja}+s_as_c\delta_{jb}+s_as_b\delta_{jc})]f^{(5)}(w)\\
&\quad+\!8s_j^3s_as_bs_cf^{(6)}(w),
\end{align*}
where $\delta_{jj}=1$ and $\delta_{jk}=0$ if $j\not=k$.  We can bound these partial derivatives by using the inequalities $\delta_{jk}\leq 1$ and $\prod_{k=1}^n|a_k|\leq \frac{1}{n}\sum_{k=1}^n|a_k|^n$ for $n\geq1$.  Doing so yields the bounds
\begin{align*}\bigg|\frac{\partial^3h_1}{\partial s_a\partial s_b\partial s_c}(\mathbf{s})\bigg|&\leq6\|f^{(3)}\|+6\|f^{(4)}\|(s_a^2+s_b^2+s_c^2+s_j^2)\\
&\quad+2\|f^{(5)}\|(s_a^4+s_b^4+s_c^4+s_j^4), \\
\bigg|\frac{\partial^3h_2}{\partial s_a\partial s_b\partial s_c}(\mathbf{s})\bigg|&\leq 6\|f^{(3)}\|+6\|f^{(4)}\|(s_a^2+s_b^2+s_c^2+3s_j^2)\\
&\quad+\|f^{(5)}\|(7s_s^4+7s_b^4+7s_c^4+27s_j^4)\\
&\quad+\frac{4}{3}\|f^{(6)}\|(s_a^6+s_b^6+s_c^6+3s_j^6).
\end{align*}

We now use the inequality for the third order partial derivative of $h_2(\mathbf{s})$ to bound the third order partial derivatives of $\psi_2(\mathbf{s})$.  The random vector $\mathbf{Z}\sim\mathrm{MVN}(\mathbf{0},\Sigma_{\mathbf{S}})$ can be written as $(Z_1,\ldots,Z_m)$, where $Z_j\sim N(0,1-p_j)$ and $\mathrm{Cov}(Z_j,Z_k)=-\sqrt{p_jp_k}$ for $j\not=k$.  On applying the inequality $|a_1+a_2|^r\leq 2^{r-1}(|a_1|^r+|a_2|^r)$, where $r\geq1$, we have
\begin{align*}&\bigg|\frac{\partial^3\psi_i}{\partial s_a\partial s_b\partial s_c}(\mathbf{s})\bigg|\\
&\leq\int_0^{\infty}\mathrm{e}^{-3u}\mathbb{E}\bigg[6\|f^{(3)}\|+12\|f^{(4)}\|\Big[\mathrm{e}^{-2u}(s_a^2+s_b^2+s_c^2+3s_j^2)\\
&\quad+(1-\mathrm{e}^{-2u})(Z_a^2+Z_b^2+Z_c^2+3Z_j^2)\Big]
+8\|f^{(5)}\|\Big[\mathrm{e}^{-4u}(7s_a^4+7s_b^4+7s_c^4\\
&\quad+27s_j^4)+(1-\mathrm{e}^{-2u})^2(7Z_a^4+7Z_b^4+7Z_c^4+27Z_j^4)\Big]
+\frac{128}{3}\|f^{(6)}\|\Big[\mathrm{e}^{-6u}(s_a^6\\
&\quad+s_b^6+s_c^6+3s_j^6)+(1-\mathrm{e}^{-2u})^3(Z_a^6+Z_b^6+Z_c^6+3Z_j^6)\Big]\bigg]\,\mathrm{d}u\\
&\leq\frac{\|f^{(3)}\|}{2}+\frac{12}{5}\|f^{(4)}\|\Big[4+s_a^2+s_b^2+s_c^2+3s_j^2\Big]+\frac{8}{35}\|f^{(5)}\|\Big[384+5(7s_a^4+7s_b^4\\
&\quad+7s_c^4+27s_j^4)\Big]+
\|f^{(6)}\|\bigg[\frac{4096}{21}+\frac{128}{27}(s_a^6+s_b^6+s_c^6+3s_j^6)\bigg].
\end{align*}  
To obtain the last inequality, we used that $\mathbb{E}Z_j^2=1-p_j<1$, $\mathbb{E}Z_j^4=3(1-p_j)^2<3$ and $\mathbb{E}Z_j^6=15(1-p_j)^3<15$, $j=1,\ldots,m$, and the formulas $\int_0^\infty \mathrm{e}^{-3u}(1-\mathrm{e}^{-2u})\,\mathrm{du}=\frac{2}{15}$, $\int_0^\infty \mathrm{e}^{-3u}(1-\mathrm{e}^{-2u})^2\,\mathrm{du}=\frac{8}{105}$ and $\int_0^\infty \mathrm{e}^{-3u}(1-\mathrm{e}^{-2u})^3\,\mathrm{du}=\frac{16}{315}$.  This completes the proof of inequality (\ref{cmlsp2}), and inequality (\ref{cmlsp1}) follows from a similar calculation. \hfill $\square$

\vspace{3mm}

\noindent\emph{Proof of Theorem \ref{pearsqrtn}.} As was the case in the proof of Theorem \ref{Pearson_thm_non_int}, we require a bound for $\mathbb{E}\mathcal{A}_{m-1}f(W)$, which is equivalent to bounding $\mathbb{E}\mathcal{A}_{\mathrm{MVN}(\mathbf{0},\Sigma_\mathbf{S})}g(\mathbf{S})$.  By using Taylor expansions in a similar manner to that used in the proof of Theorem \ref{Pearson_thm_non_int}, we have that
\begin{equation*}|\mathbb{E}h(W)-\chi_{(m-1)}^2h|=|\mathbb{E}\mathcal{A}_{m-1}f(W)|=|\mathbb{E}\mathcal{A}_{\mathrm{MVN}(\mathbf{0},\Sigma_\mathbf{S})}g(\mathbf{S})|\leq |R_1|+|R_2|,
\end{equation*}
where
\begin{align*}|R_1|&\leq \frac{1}{2\sqrt{n}}\bigg|\sum_{j=1}^m\sum_{k=1}^m\sum_{l=1}^{m}\frac{1}{\sqrt{p_jp_kp_l}}\mathbb{E}\tilde{I}_j(1)I_k(1)I_l(1)\frac{\partial^3g}{\partial s_j\partial s_k\partial s_l}(\boldsymbol{\xi})\bigg|, \\
|R_2|&\leq \frac{1}{\sqrt{n}}\bigg|\sum_{j=1}^m\sum_{k=1}^m\sum_{l=1}^{m}\frac{1}{\sqrt{p_jp_kp_l}}\mathbb{E}\tilde{I}_j(1)I_k(1)\mathbb{E}I_l(1)\frac{\partial^3g}{\partial s_j\partial s_k\partial s_l}(\boldsymbol{\xi})\bigg|.
\end{align*}
and again $\boldsymbol{\xi}$ denotes a vector with $m$ entries with $j$-th entry $\xi_j=S_j^{(1)}+\frac{\theta_j}{\sqrt{np_j}}I_j(1)$ for some $\theta_j\in(0,1)$.  Carrying out a calculation similar to the one used to obtain (\ref{re123z}) yields 
\begin{align}|R_1|\!+\!|R_2|&\leq\frac{1}{2\sqrt{n}}\bigg\{\!\sum_{j=1}^m\frac{1}{p_j^{3/2}}\mathbb{E}\bigg|I_j(1)\frac{\partial^3g}{\partial s_j^3}(\boldsymbol{\xi})\bigg| \!+\!2\sum_{j=1}^m\sum_{k=1}^m\!\frac{1}{\sqrt{p_j}}\mathbb{E}\bigg|I_j(1)\frac{\partial^3g}{\partial s_j\partial s_k^2}(\boldsymbol{\xi})\bigg|\nonumber \\
&\quad+\sum_{j=1}^m\sum_{k\not=j}^m\frac{\sqrt{p_k}}{p_j}\mathbb{E}\bigg|I_j(1)\frac{\partial^3g}{\partial s_j^2\partial s_k}(\boldsymbol{\xi})\bigg| \nonumber\\
\label{rhsexpect}&\quad+2\sum_{j=1}^m\sum_{k\not=j}^m\sum_{l=1}^m\sqrt{\frac{p_kp_l}{p_j}}\mathbb{E}\bigg|I_j(1)\frac{\partial^3g}{\partial s_j\partial s_k\partial s_l}(\boldsymbol{\xi})\bigg|\bigg\}.
\end{align}

To complete the proof, we require bounds on the expectations on the right-hand side of (\ref{rhsexpect}).  Now recall that $g(\mathbf{s})=\frac{1}{4}f(w)$.  By a straightforward differentiation,
\[\frac{\partial^3g}{\partial s_j^3}(\mathbf{s})=3s_jf''(w)+2s_j^2f^{(3)}(w),\]
and so, for any $j,k,l$, 
\begin{equation}\label{partderboundz}\bigg|\frac{\partial^3g}{\partial s_j\partial s_k\partial s_l}(\mathbf{s})\bigg|\leq (|s_j|+|s_k|+|s_l|)\|f''\|+\frac{2}{3}(|s_j|^3+|s_k|^3+|s_l|^3)\|f^{(3)}\|.
\end{equation}
To bound the expectations, we also use the inequalities $\mathbb{E}|I_j(1)\xi_k|<2p_j$ and $\mathbb{E}|I_j(1)\xi_k^3|<14p_j$, $j,k=1,\ldots,m$ from Lemma \ref{qqqppp}.  Using (\ref{partderboundz}) and these inequalities yields
\begin{equation*}\mathbb{E}\bigg|I_j(1)\frac{\partial^3g}{\partial s_j\partial s_k\partial s_l}(\boldsymbol{\xi})\bigg|\leq 3\cdot 2p_j\|f''\|+2\cdot 14p_j\|f^{(3)}\|=2p_j\{3\|f''\|+14\|f^{(3)}\|\}.
\end{equation*}
Hence, we obtain the bound
\begin{align}|\mathbb{E}h(W)-\chi_{(m-1)}^2h|&\leq\frac{1}{\sqrt{n}}\{3\|f''\|+14\|f^{(3)}\|\}\bigg\{\sum_{j=1}^m\frac{1}{\sqrt{p_j}}+2m\sum_{j=1}^m\sqrt{p_j}\nonumber\\
&\quad+\sum_{j=1}^m\sum_{k\not=j}^m\sqrt{p_k}+2\sum_{j=1}^m\sum_{k\not=j}^m\sum_{l=1}^m\sqrt{p_jp_kp_l}\bigg\} \nonumber\\
\label{nrfgt}&\leq\frac{6}{\sqrt{n}}\{3\|f''\|+14\|f^{(3)}\|\}\sum_{j=1}^m\frac{1}{\sqrt{p_j}}.
\end{align}
Using inequality (\ref{chibound}) to translate bounds for the derivatives of the solution $f$ to bounds on the derivatives of the test function $h$ completes the proof.    \hfill $\square$

\vspace{3mm}

\noindent\emph{Proof of Corollary \ref{kolcol}.} Let $\alpha>0$, and for some fixed $z>0$ define
\begin{equation*}h_\alpha(x)=\begin{cases} 1, & \: \mbox{if } x\leq z, \\
1-2(x-z)^2/\alpha^2, & \:  \mbox {if } z<x\leq z+\alpha/2, \\
2(x-(z+\alpha))^2/\alpha^2, & \:  \mbox {if } z+\alpha/2<x\leq z+\alpha, \\
0, & \:  \mbox {if } x\geq z+\alpha. \end{cases}
\end{equation*}
Then $h_\alpha'$ exists and is Lipshitz continuous with $\|h_\alpha\|=1$, $\|h_\alpha'\|=2/\alpha$ and $\|h_\alpha''\|=4/\alpha^2$.  Let $Y_d$ be a $\chi_{(d)}^2$ random variable, then, by (\ref{pearbound01}),
\begin{align}&\mathbb{P}(W\leq z)-\mathbb{P}(Y_{m-1}\leq z)\nonumber\\
&\leq \mathbb{E}h_\alpha(W)-\mathbb{E}h_\alpha(Y_{m-1})+\mathbb{E}h_\alpha(Y_{m-1})-\mathbb{P}(Y_{m-1}\leq z)\nonumber \\
&\leq \frac{12}{\sqrt{np_*}}\{6\|h_\alpha\|+46\|h_\alpha'\|+84\|h_\alpha''\|\}+\mathbb{P}(z\leq Y_{m-1}\leq z+\alpha)\nonumber \\
\label{kol123}&=\frac{12}{\sqrt{np_*}}\bigg\{6+\frac{92}{\alpha}+\frac{336}{\alpha^2}\bigg\}+\mathbb{P}(z\leq Y_d\leq z+\alpha).
\end{align}

Now, for $d=1$ (which corresponds to $m=2$), 
\begin{equation*}\mathbb{P}(z\leq Y_1\leq z+\alpha)=\int_z^{z+\alpha}\frac{\mathrm{e}^{-x/2}}{\sqrt{2\pi x}}\,\mathrm{d}x \leq \int_0^{\alpha}\frac{1}{\sqrt{2\pi x}}\,\mathrm{d}x=\sqrt{\frac{2\alpha}{\pi}}.
\end{equation*}
For $d\geq2$, the mode of $Y_d$ is given by $d-2$.  The density of $Y_2$ is clearly bounded by $\frac{1}{2}$, and, for $d\geq3$, the density of $Y_d$ can be bounded by
\begin{align*}\frac{1}{2^{d/2}\Gamma(\frac{d}{2})}x^{d/2-1}\mathrm{e}^{-x/2}\leq\frac{1}{2^{d/2}\Gamma(\frac{d}{2})}(d-2)^{d/2-1}\mathrm{e}^{-(d-2)/2}\leq\frac{1}{2\sqrt{\pi(d-2)}},
\end{align*}
where the last inequality follows from Stirling's inequality $\Gamma(x+1)\geq \sqrt{2\pi}x^{x+1/2}\mathrm{e}^{-x}$, which holds for all $x>0$.  Therefore
\begin{equation}\label{ccbbcsb}\mathbb{P}(z\leq Y_{m-1}\leq z+\alpha)\leq \begin{cases} \displaystyle \sqrt{2\alpha/\pi}, & \: \mbox{if } m=2, \\
\alpha/2, & \: \mbox{if } m=3, \\
\displaystyle \frac{\alpha}{2\sqrt{\pi(m-3)}}, & \:  \mbox {if } m\geq4. \end{cases}
\end{equation}

Bounds for $m=2$, $m=3$ and $m\geq 4$ now follow on substituting inequality (\ref{ccbbcsb}) into (\ref{kol123}) and choosing an appropriate $\alpha$.  For $m=2$, we take $\alpha=52.75n^{-1/5}$; for $m=3$, we choose $\alpha=25.27n^{-1/6}$; and $\alpha=30.58(m-3)^{1/6}n^{-1/6}$ is taken when $m\geq4$.  We can obtain a lower bound similarly, which is the negative of the upper bound.  The proof is now complete. \hfill $\square$

\section*{Acknowledgements}
The problem was first brought to our attention by Persi Diaconis, and we are thankful for that. As so many, this  paper would not exist without him.  Larry Goldstein provided many fruitful discussions. During the course of this research, RG was supported by an EPSRC DPhil Studentship, an EPSRC Doctoral Prize, and  EPSRC grant EP/K032402/1. AP was funded in part by EPSRC grant GR/R52183/01. GR acknowledges support from  EPSRC grants GR/R52183/01 and  EP/K032402/1. We would also like to thank an anonymous referee for their report.

\end{document}